
\documentclass[12pt]{amsart}
\usepackage{fullpage}

\usepackage{hyperref}

\usepackage{xcolor}

\usepackage{xypic}

\usepackage{enumerate}
\usepackage{amsthm}
\usepackage{amsmath}
\usepackage{amssymb}
\usepackage{amsfonts}

\newtheorem{thm}{Theorem}[section]
\newtheorem*{thm*}{Theorem} 
\newtheorem{lem}[thm]{Lemma}
\newtheorem{Def}[thm]{Definition}

\newtheorem{prop}[thm]{Proposition}
\newtheorem{cor}[thm]{Corollary}
\newtheorem{rem}[thm]{Remark}

\theoremstyle{definition}{\newtheorem{ex}[thm]{Example}}

  

\setlength{\belowdisplayskip}{-5pt} \setlength{\belowdisplayshortskip}{-5pt}
\setlength{\abovedisplayskip}{0pt} \setlength{\abovedisplayshortskip}{0pt}

\newcommand{\N}{\ensuremath{\mathbb{N}}}
\newcommand{\C}{\ensuremath{\mathbb{C}}}
\newcommand{\Z}{\ensuremath{\mathbb{Z}}}
\newcommand{\Q}{\ensuremath{\mathbb{Q}}}
\newcommand{\R}{\ensuremath{\mathbb{R}}}

\newcommand{\s}{\ensuremath{\sigma}}
\newcommand{\f}{\phi}
\newcommand{\de}{\delta}
\newcommand{\ds}{{\delta\sigma}}

\newcommand{\G}{\ensuremath{\mathcal{G}}}

\newcommand{\X}{\ensuremath{\mathcal{X}}}
\newcommand{\Y}{\ensuremath{\mathcal{Y}}}

\newcommand{\GL}{\operatorname{GL}}

\newcommand{\Gm}{\mathbb{G}_m}
\newcommand{\Ga}{\mathbb{G}_a}

\newcommand{\Hom}{\operatorname{Hom}}
\newcommand{\Aut}{\operatorname{Aut}}
\newcommand{\Gal}{\underline{\operatorname{Gal}}^{\ds}}

\newcommand{\Spec}{\operatorname{Spec}}

\newcommand{\Frac}{\ensuremath{\mathrm{Frac}}}

\def\H{\mathcal{H}}

\newcommand{\ks}{$k$-$\s$}

\newcommand{\ida}{\mathfrak{a}}
\newcommand{\V}{\mathbb{V}}
\newcommand{\hs}{{{}^\sigma\!}}
\newcommand{\hsi}{^{\sigma^i}\!}
\newcommand{\pis}{\pi_0^\sigma}
\newcommand{\ssetale}{{strongly $\sigma$-\'{e}tale}}


\title{Algebraic groups as difference Galois groups of linear differential equations}
\author{Annette Bachmayr and Michael Wibmer}
\address{Michael Wibmer, Institute of Analysis and Number Theory, Graz University of Technology, Kopernikusgasse 24, 8010 Graz, Austria}
\email{wibmer@math.tugraz.at}

\address{Annette Bachmayr (n\'{e}e Maier), Institute of Mathematics, Univsersity of Mainz, Staudingerweg 9, 55128 Mainz, Germany}
\email{abachmay@uni-mainz.de}

\thanks{The first author was funded by the Deutsche Forschungsgemeinschaft (DFG) - grants MA6868/1-1, MA6868/1-2 and by the Alexander von Humboldt foundation through a Feodor Lynen fellowship. The second author was supported by the NSF grants DMS-1760212, DMS-1760413, DMS-1760448 and the Lise Meitner grant \mbox{M-2582-N32} of the Austrian Science Fund FWF}

\subjclass[2010]{12H10, 12H05, 34M15, 34M50, 14L15}

%

\keywords{Differential Galois theory, parameterized Picard-Vessiot theory, difference algebraic groups, inverse problems}

\date{\today}

\begin{document}

	\begin{abstract}
		We study the inverse problem in the difference Galois theory of linear differential equations over the difference-differential field $\C(x)$ with derivation $\frac{d}{dx}$ and endomorphism $f(x)\mapsto f(x+1)$. Our main result is that every linear algebraic group, considered as a difference algebraic group, occurs as the difference Galois group of some linear differential equation over $\C(x)$. 
	\end{abstract}
	
\maketitle

\section*{Introduction}
The Galois group of a polynomial over a field is a finite group. The inverse problem in the Galois theory of polynomials asks to determine, for a given field, which finite groups occur. For example, for the field $\C(x)$ of rational functions over $\C$, it is known that every finite group occurs (see e.g., \cite[Cor. 3.4.4]{Samuely:GaloisGroupsAndFundamentalGroups}).

The Galois group of a linear differential equation over a differential field is a linear algebraic group. The inverse problem in the Galois theory of linear differential equations asks to determine, for a given differential field, which linear algebraic groups occur. For example, for the field $\C(x)$ with derivation $\frac{d}{dx}$, it is known that every linear algebraic group occurs. This was first proved in \cite{Tretkoff}, based on the solution of Hilbert's 21\textsuperscript{st} problem.

A difference-differential field is a field equipped with two commuting operators, a derivation and an endomorphism, usually denoted with $\s$. The $\s$-Galois group of a linear differential equation over a difference-differential field is a linear difference algebraic group, i.e., a subgroup of a general linear group defined by algebraic difference equations in the matrix entries. The problem we are concerned with in this article is the inverse problem in the $\s$-Galois theory of linear differential equations. It asks to determine, for a given difference-differential field, which difference algebraic groups occur.
We are mainly interested in the difference-differential field $\C(x)$ with derivation $\frac{d}{dx}$ and endomorphism $\s\colon \C(x)\to\C(x),\ f(x)\mapsto f(x+1)$.

As we will show, not every difference algebraic group occurs as a $\s$-Galois group of a linear differential equation over $\C(x)$.  In fact, constant subgroups of unipotent linear algebraic groups do not occur (Corollary \ref{cor: unipotent}) and moreover, we isolate two properties that any $\s$-Galois group over $\C(x)$ must have (Theorem \ref{theo: sgalois groups are sreduced and sconnected}). On the positive side, our main result is the following: 

\begin{thm*}[Theorem \ref{theo: main}]
	Every linear algebraic group, considered as a difference algebraic group, occurs as a $\s$-Galois group over $\C(x)$.
\end{thm*}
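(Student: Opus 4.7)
I would start from the classical Tretkoff theorem, which realizes every linear algebraic group $G$ over $\C$ as the ordinary differential Galois group of some Fuchsian equation $\mathcal{L}$ over $\C(x)$, and then refine the construction so as to pin down the additional $\s$-Galois structure induced by $\s\colon x\mapsto x+1$. The first observation is that for any $\mathcal{L}$ with ordinary differential Galois group $G$, the $\s$-Galois group $H$ is naturally a $\s$-closed subgroup of the $\s$-algebraic group associated to $G$. The task therefore reduces to producing $\mathcal{L}$ for which the inclusion $H\hookrightarrow G$ is an equality of $\s$-algebraic groups. As a preliminary step I would handle the component group of $G$ via a finite Galois extension of $\C(x)$, which exists by the classical inverse Galois problem over $\C(x)$, together with an extension-of-$\s$-groups argument, reducing to the connected case.

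For connected $G$, I would use the Riemann--Hilbert correspondence to realize $\mathcal{L}$ as a Fuchsian equation on $\mathbb{P}^1$ with a finite singular set $S\subset\C$ and a monodromy representation $\rho\colon\pi_1(\C\setminus S)\to G(\C)$ of Zariski-dense image, as in Tretkoff's proof. To also gain hold of the $\s$-Galois group, I would choose $S$ so that the integer translates $S+n$ are pairwise disjoint; the shifted equations $\s^n(\mathcal{L})$ then have mutually disjoint singular sets, and their fundamental matrices $\s^n(Y)$ inside the $\s$-PV ring carry monodromies that are essentially independent across the levels $n$.

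The heart of the argument is then to convert this independence together with Zariski density at each level into the vanishing of every $\s$-polynomial relation among the entries of $Y,\s(Y),\s^2(Y),\ldots$. Concretely, one must show that for every $n$ the joint monodromy is Zariski-dense in $G(\C)^{n+1}$, and that this density forces the defining $\s$-ideal of $H$ inside $G$ to be trivial, so $H=G$ as $\s$-algebraic groups.

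\emph{Main obstacle.} The main difficulty is the last step: translating density at each finite level into the non-existence of $\s$-algebraic relations of arbitrary order, a statement about the full inductive system of shifted equations rather than about any fixed finite-dimensional variety. I expect this to require an iterative construction that enlarges $S$ and refines the local monodromy data at the translated singular points in order to kill putative relations of successively higher $\s$-order, exploiting the flexibility of Riemann--Hilbert in choosing both the singularities and the prescribed local data. The necessary conditions from Theorem~\ref{theo: sgalois groups are sreduced and sconnected} ($\s$-reducedness and $\s$-connectedness of the realized group) then serve as a consistency check on the construction.
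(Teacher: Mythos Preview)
Your route is genuinely different from the paper's. The paper never invokes Tretkoff or Riemann--Hilbert; instead it proves (Proposition~\ref{prop: generating}) that $[\s]_\C\G$ is generated as a $\s$-algebraic group by copies of $[\s]_\C\Gm$, $[\s]_\C\Ga$ and $[\s]_\C(\Z/d\Z)$, realizes each of these building blocks explicitly inside fields $L(m)$ of meromorphic functions via the elementary Lemmas~\ref{lemma: 1}--\ref{lemma: 3}, and then glues them together with the patching Theorem~\ref{thm: patching} over the $\ds$-diamonds $(F,F(m-1),L(m),F(m))$ constructed in Section~3. Your monodromy idea, by contrast, would produce the whole group in one stroke once the singular set $S$ is chosen so that $S,S-1,S-2,\ldots$ are pairwise disjoint.

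That core idea is sound, but you have misjudged where the work lies. The step you call the ``main obstacle'' is actually automatic: a $\s$-polynomial relation has some finite order $n$ and is therefore already a relation at level $n$, so by Proposition~\ref{prop: relation to clasical PV} it suffices that the \emph{classical} Galois group of $R_n=F[Y,\s(Y),\ldots,\s^n(Y),\ldots]$ equal $G^{n+1}$ for every $n$. And for a single Fuchsian system with singular set $S$ having pairwise disjoint integer translates, the block-diagonal system $\de(Z)=\operatorname{diag}(A,\s(A),\ldots,\s^n(A))Z$ is again Fuchsian, a small loop around $t\in S-m$ has nontrivial monodromy only in block $m$, and these loops already generate each factor; hence the monodromy group is $\prod_m\Gamma_m$ with each $\Gamma_m$ conjugate to the original dense $\Gamma\subset G(\C)$, so its Zariski closure is $G^{n+1}$ by Schlesinger. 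No iterative enlargement of $S$ is needed, and no separate reduction to the connected case is needed either (your proposed reduction via a finite Galois extension of $\C(x)$ is in any case problematic, since such an extension is typically not stable under $x\mapsto x+1$).

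What you have genuinely omitted is the choice of an ambient $\ds$-field $E$ with $E^\de=\C$ in which $Y,\s(Y),\s^2(Y),\ldots$ all live simultaneously, so that Lemma~\ref{lemma: crit} applies and $R=F\{Y,1/\det(Y)\}$ is a $\s$-Picard-Vessiot ring. This is not entirely trivial: $Y$ is only single-valued on a simply connected domain $\Omega$, and you need $\Omega$ to be invariant under $x\mapsto x+1$ so that $\s$ acts on $\mathrm{Mer}(\Omega)$. Choosing $S$ in the upper half-plane and $\Omega$ the lower half-plane does the job, but this is precisely the kind of analytic bookkeeping that the paper's direct-limit fields $F(m),L(m)$ are designed to handle; you should make this step explicit rather than leave it implicit in the phrase ``inside the $\s$-PV ring.''
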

If a linear algebraic group $G$, considered as a difference algebraic group, occurs as a $\s$-Galois group of a linear differential equation $y'=Ay$, then the Galois group of the linear differential equation $y'=Ay$ is the linear algebraic group $G$. Thus the above theorem generalizes the solution of the inverse problem in the Galois theory of linear differential equations over $\C(x)$.

Our main tool for the proof of the above theorem is patching. In fact, we establish a general patching result (Theorem \ref{thm: patching}) for $\s$-Picard-Vessiot rings over difference-differential fields that we deem of independent interest. Here a $\s$-Picard-Vessiot ring is the analog of the splitting field in the Galois theory of polynomials. This patching result is analogous to known patching results in the Galois theories of linear differential equations (\cite[Theorem 2.4]{BachmayrHarbaterHartmann:DifferentialGaloisGroupsOverLaurentSeriesFields}) and parameterized linear differential equations (\cite[Theorem 2.2]{param_LAG}) that turned out to be very useful in the study of the corresponding inverse problems. We therefore expect Theorem \ref{thm: patching} to have further applications in the study of the inverse problem in the $\s$-Galois theory of linear differential equations.
  
\medskip

To put our results into perspective, let us review the state of the art of the inverse problem in the various Galois theories. The three most relevant Galois theories for us are the following: 
\begin{enumerate}
	\item The Galois theory of linear differential equations, where the Galois groups are linear algebraic groups. See e.g. \cite{SingerPut:differential}. 
	\item The Galois theory of parameterized linear differential equations, where the Galois groups are differential algebraic groups. See \cite{CassSin} and \cite{Landesman:GeneralizedDifferentialGaloisTheory}.
	\item The $\s$-Galois theory of linear differential equations, where the Galois groups are difference algebraic groups. See \cite{DiVizioHardouinWibmer:DifferenceGaloisTheoryOfLinearDifferentialEquations}.
\end{enumerate}  

\subsection*{Galois theory of linear differential equations}

Building on work of several authors, the inverse problem in the Galois theory of linear differential equations over $k(x)$, where $k$ is an algebraically closed field of characteristic zero, was eventually solved in \cite{Hartmann:OnTheInverseProblemInDifferentialGaloisTheory}: All linear algebraic groups over $k$ occur as Galois groups. For non-algebraically closed fields $k$, there are only partial results. For example, if $k$ is a Laurent series field, all linear algebraic groups over $k$ occur as Galois groups (\cite[Theorem 4.14]{BachmayrHarbaterHartmann:DifferentialGaloisGroupsOverLaurentSeriesFields}) and the same holds for $k=\Q_p$ (\cite{BachmayrHartmannHarbaterPopLarge}). 

Going beyond the solution of the inverse problem, there has been recent progress in the study of differential embedding problems and the structure of the absolute differential Galois group of $k(x)$ using patching techniques. See \cite{BachmayrHarbaterHartmannWibmer:DifferentialEmbeddingProblemsOverComplexFunctionFields,BachmayrHarbaterHartmann:DifferentialEmbeddingProblemsOverLaurantSeriesFields,BachmayrHartmannHarbaterPopLarge,Wibmer:FreeProalgebraicGroups,BachmayrHarbaterHartmannWibmer:FreeDifferentialGaloisGroups}.
%

\subsection*{Galois theory of parameterized linear differential equations}
If the coefficients of a linear differential equation depend on an auxiliary parameter, one can differentiate the solutions with respect to this parameter. The Galois group of a parameterized linear differential equation is a differential algebraic group that measures the algebraic relations among the solutions and their derivatives with respect to the auxiliary parameter. From an algebraic perspective, this setup is modeled by considering a field $K$ with two commuting derivations $\de$ and $\partial$. A typical example is $K=\C(t,x)$, where $x$ is the main variable and $t$ the parameter, i.e., we are interested in linear differential equations with respect to $\de=\frac{d}{dx}$ and the $\partial=\frac{d}{dt}$-derivatives of their solutions.
More generally, if $k$ is a field equipped with a derivation $\partial$, then $K=k(x)$ is naturally equipped with two commuting derivations, $\de=\frac{d}{dx}$ and $\partial$, where $\partial(x)=0$.
 This Galois theory and its variants have proven to be very useful in questions of hypertranscendence. See \cite{HardouinSinger:DifferentialGaloisTheoryofLinearDifferenceEquations}, \cite{Arreche:AGaloisTheoreticProofOfTheDifferentialTranscendenceOfTheIncompleteGammaFunction}, \cite{HardouinMinchenkoOvchinnikov:CalculatingDifferentialGaloisGroupsOfParameterizedDifferentialEqautionsWithApplications}, \cite{DreyfusHardouinRoques:HypertranscendenceOfSolutionsOfMahlerFunctions}, \cite{DiVizio:ApprocheGaloisienneDeLaTranscendanceDifferentielle}.
 
The inverse problem in this Galois theory is not well-understood. For example, it is not known which differential algebraic groups occur as Galois groups over $\C(t,x)$. The most comprehensive result is only available under strong assumptions on $k$: Building on \cite{MR2975151}, it was shown in \cite{Dreyfus} that if $k$ is a universal $\partial$-field, then a differential algebraic group over $k$ is a Galois group over $k(x)$ if and only if it is the Kolchin closure of a finitely generated subgroup. For certain differential algebraic groups, including linear algebraic groups, the latter condition was translated into group theoretic conditions in \cite{Singer:LinearAlgebraicGroupsAsParameterizedPicardVessiotGaloisGroups} and \cite{MinchenkoOvchinnikovSinger:UnipotentDifferentialAlgebraicGroupsAsParameterizedDifferentialGaloisGroups}.

For $k$ a Laurent series field, it was shown in \cite{param_LAG} that a large class of linear algebraic groups, considered as differential algebraic groups, occur as Galois groups. Also certain differential algebraic groups occur in this situation (\cite{MR3774417}). On the other hand, it is also shown in \cite{MR3774417} that many differential algebraic subgroups of the additive or the multiplicative group do not occur over $k(x)$, unless the $\partial$-field $k$ is fairly big.

\subsection*{$\s$-Galois theory of linear differential equations}
The $\s$-Galois theory of linear differential equations is similar to the Galois theory of parameterized linear differential equations. Again one considers a linear differential equation depending on a parameter $t$. But instead of deriving the solutions with respect to $t$, one applies a discrete transformation to $t$ and the solutions, e.g., $t\mapsto t+1$. The Galois groups are difference algebraic groups and they measure the algebraic relations among the solutions and their transforms under a discrete transformation usually denoted with $\s$. From an algebraic perspective, this setup is modeled by considering a field $K$, with two commuting operators, a derivation $\de$ and an endomorphism $\s$. The inverse problem in this Galois theory is wide open. It appears that beyond some initial observations in \cite{DiVizioHardouinWibmer:DifferenceAlgebraicRel} nothing is known. In this paper, for the first time, a significant class of difference algebraic groups is shown to occur as Galois groups.

It should also be noted that in \cite{ArrecheSinger}, the authors consider $\delta$-Galois groups of linear \emph{difference} equations over $\C(x)$ with respect to the difference operator either the shift $x\mapsto x+1$, a $q$-dilation $x\mapsto qx$ or a Mahler operator $x\mapsto x^q$. For certain classes of solvable differential algebraic groups, they characterize which groups occur as $\delta$-Galois groups. It seems plausible that using similar methods as in \cite{ArrecheSinger} it could also be shown that certain solvable difference algebraic groups (that are not algebraic groups) occur as $\sigma$-Galois groups.

\medskip

The direct problem in the above Galois theories is to compute the Galois group of a given (parameterized) linear differential equation. We note that progress in the inverse problem can be helpful for the direct problem. For example, if it is already known that the Galois group of a given (parameterized) differential equation is non-trivial and contained in a certain group $G$, the information, that no non-trivial subgroup of $G$ is a Galois group would already imply that the searched for Galois group equals $G$. As we show (Prop. \ref{prop do not occur}) this occurs, e.g. for the additive group $G=\Ga$.

\medskip

We conclude the introduction with an overview of the article. In the first section we recall and establish some basic preparatory definitions and results concerning difference algebraic groups and the $\s$-Galois theory of linear differential equations. The main patching result (Theorem \ref{thm: patching}) is established in Section 3. Roughly speaking, it states that if two difference algebraic subgroups $H_1$ and $H_2$ of some difference algebraic group occur as Galois groups in a certain compatible fashion, then also the difference algebraic group generated by $H_1$ and $H_2$ occurs. It follows from this result that to realize all linear algebraic groups (considered as difference algebraic groups) as Galois groups it suffices to realize certain building blocks, namely, the multiplicative group, the additive group and finite cyclic groups. Building on work in the second section, these building blocks are then dealt with in Section 4. Our main result, that all linear algebraic groups (considered as difference algebraic groups) occur as Galois groups over $\C(x)$, where $\s(f(x))=f(x+1)$, is then established in Section 5. In the final section we show that not all difference algebraic groups occur as Galois groups over $\C(x)$. In fact, we isolate two properties that any Galois group over $\C(x)$ must have. The fist property is that is has to be $\s$-reduced. This follows rather directly from the fact that $\s\colon \C(x)\to \C(x)$ is bijective. The second property is $\s$-connectedness. This boils down to the fact that $\C(x)$ does not have any finite difference field extensions. We also show that no proper non-trivial difference algebraic subgroup of the additive group $\Ga$ is a $\s$-Galois group over $\C(x)$ and deduce from this that also the constant points of unipotent linear algebraic groups do not occur.

\medskip The authors are grateful to Thomas Dreyfus and David Harbater for helpful discussions related to the content of this paper.

\section{Basics on difference Galois theory}

In this section we recall the necessary definitions concerning difference algebraic groups and $\s$-Picard-Vessiot theory. We also establish some results of a preparatory nature.

All rings are assume to be commutative and unital.

\subsection{Difference algebraic groups}

We begin by recalling some basic notions from difference algebra. Standard references for difference algebra are \cite{Cohn:difference} and \cite{Levin:difference}. 
For more background on difference algebraic groups see \cite[Appendix A]{DiVizioHardouinWibmer:DifferenceGaloisTheoryOfLinearDifferentialEquations} or \cite{Wibmer:FinitenessProperties}.

A \emph{difference ring}, or \emph{$\s$-ring} for short, is a ring $R$ together with an endomorphism $\s\colon R\to R$. A morphism $\psi\colon R\to S$ of $\s$-rings is a morphism of rings such that
$$
\xymatrix{
R \ar^\psi[r] \ar_\s[d] & S \ar^\s[d] \\
R \ar^\psi[r] & S	
}
$$
commutes. A $\s$-ring is a \emph{$\s$-field} if the underlying ring is a field.

Let $k$ be a $\s$-ring. A \emph{\ks-algebra} is a $\s$-ring $R$ together with a morphism $k\to R$ of $\s$-rings. A morphism of \ks-algebras is a morphism of $k$-algebras that is also a morphism of $\s$-rings.
For a subset $B$ of $R$, the smallest \ks-subalgebra of $R$ that contains $B$ is denoted by $k\{B\}$. Note that $k\{B\}$ is generated by $B,\s(B),\s^2(B),\ldots$ as a $k$-algebra. If $R=k\{B\}$ for a finite subset $B$ of $R$ then $R$ is called \emph{finitely $\s$-generated}. If $R$ and $S$ are \ks-algebras, then $R\otimes_k S$ is a \ks-algebra with $\s$ defined by $\s(r\otimes s)=\s(r)\otimes \s(s)$ for $r\in R$ and $s\in S$.

An ideal $\ida$ of a $\s$-ring $R$ is a \emph{$\s$-ideal} if $\s(\ida)\subseteq \ida$. In this case $R/\ida$ is naturally a $\s$-ring.

The \emph{$\s$-polynomial ring} $k\{y\}=k\{y_1,\ldots,y_n\}$ over $k$ in the $\s$-variables $y_1,\ldots,y_n$ is the polynomial ring over $k$ in the variables $\s^i(y_j)$ ($1\leq j\leq n$, $0\leq i$) with $\s\colon k\{y\}\to k\{y\}$ extending $\s\colon k\to k$ as suggested by the naming of the variables. If $f\in k\{y_1,\ldots,y_n\}$ is a $\s$-polynomial and $x=(x_1,\ldots,x_n)\in R^n$ for some \ks-algebra $R$, then the element $f(x)\in R$ is obtained from $f$ by substituting $\s^i(y_j)$ with $\s^i(x_j)$.
For a \ks-algebra $R$ and $F\subseteq k\{y_1,\ldots,y_n\}$ we set
$\V_R(F)=\{x\in R^n\ |\  \text{for all }f\in F: f(x)=0 \}.$ Note that $R\rightsquigarrow \V_R(F)$ is naturally a functor from the category of \ks-algebras to the category of sets.

Let $k$ be a $\s$-field. A \emph{$\s$-variety $X$ over $k$} is a functor $R\rightsquigarrow X(R)$ from the category of \ks-algebras to the category of sets that is isomorphic (as a functor) to a functor of the form $R\rightsquigarrow\V_R(F)$ for some $n\geq 1$ and $F\subseteq k\{y_1,\ldots,y_n\}$. Thus a functor $X$ from the category of \ks-algebras to the category of sets is a $\s$-variety if and only if it is representable by a finitely $\s$-generated \ks-algebra, i.e., there exists a finitely $\s$-generated \ks-algebra $S$ such that $X\simeq \Hom(S,-)$. By the Yoneda Lemma, the \ks-algebra $S$ is uniquely determined (up to an isomorphism) by $X$. We therefore denote it with $k\{X\}$ and call it the \emph{coordinate ring} of $X$.
A morphism $\f\colon X\to Y$ of $\s$-varieties is a morphism of functors (i.e., a natural transformation). Again, by the Yoneda Lemma, the category of $\s$-varieties over $k$ is anti-equivalent to the category of finitely $\s$-generated \ks-algebras. The morphism dual to $\f\colon X\to Y$ is denoted by $\f^*\colon k\{Y\}\to k\{X\}$.

A \emph{$\s$-closed $\s$-subvariety} $Y$ of a $\s$-variety $X$ is a subfunctor $Y$ of $X$ defined by a $\s$-ideal $\ida$ of $k\{X\}$, i.e., $Y(R)=\{\psi\in\Hom(k\{X\},R)\ | \ \ida\subseteq\ker(\psi)\}\subseteq\Hom(k\{X\},R)=X(R)$ for every \ks-algebra $R$. Then $Y$ is a $\s$-variety with $k\{Y\}=k\{X\}/\ida$. In terms of equations, if $F\subseteq G\subseteq k\{y\}$, then $R\rightsquigarrow \V_R(G)$ is a $\s$-closed $\s$-subvariety of $R\rightsquigarrow \V_R(F)$. 

If $\f\colon X\to Y$ is a morphism of $\s$-varieties, there exists a unique $\s$-closed $\s$-subvariety $\f(X)$ of $Y$ such that $\f$ factors through $\f(X)$ and for every other $\s$-closed $\s$-subvariety $Z$ of $Y$ such that $\f$ factors through $Z$ we have $\f(X)\subseteq Z$ (\cite[Lemma 1.5]{Wibmer:FinitenessProperties}). Indeed, $\f(X)$ is the $\s$-closed $\s$-subvariety of $Y$ defined by the kernel of $\f^*\colon k[Y]\to k[X]$.
A morphism $\f\colon X\to Y$ of $\s$-varieties is a \emph{$\s$-closed embedding} if it induces an isomorphism between $X$ and a $\s$-closed $\s$-subvariety of $Y$, i.e., the morphism $X\to \f(X)$ is an isomorphism. 
This is equivalent to $\f^*\colon k\{Y\}\to k\{X\}$ being surjective.

The category of $\s$-varieties over $k$ has products. Indeed, if $X$ and $Y$ are $\s$-varieties, the functor $R\rightsquigarrow X(R)\times Y(R)$ is a product of $X$ and $Y$ with coordinate ring $k\{X\times Y\}=k\{X\}\otimes_k k\{Y\}$.

If $X$ is a $\s$-variety over $k$ and $k'/k$ is an extension of $\s$-fields, then $X_{k'}$ denotes the $\s$-variety over $k'$ obtained from $X$ by base change from $k$ to $k'$, i.e., $X_{k'}(R')=X(R')$ for every $k'$-$\s$-algebra $R'$ and $k'\{X_{k'}\}=k\{X\}\otimes_k k'$. 

For every $k$-algebra $R$, there exists a \ks-algebra $[\s]_k R$ together with a morphism $R\to [\s]_k R$ of $k$-algebras such that for every \ks-algebra $S$ and $k$-algebra morphism $R\to S$ there exists a unique morphism $[\s]_kR\to S$ of \ks-algebra such that
$$
\xymatrix{
R \ar[rr] \ar[rd]& & [\s]_k R \ar[ld] \\
& S 	
}
$$ 
commutes. Explicitly, $[\s]_kR$ can be described as follows: For $i\geq 0$ let ${\hsi R}=R\otimes_k k$ denote the $k$-algebra obtained from the $k$-algebra $R$ by base change via $\s^i\colon k\to k$. Set $R[i]=R\otimes _k{\hs R}\otimes_k\ldots\otimes_k{\hsi R}$ and let $[\s]_kR$ denote the union of the $R[i]$'s. We turn $[\s]_kR$ into a \ks-algebra by setting
$$\s((r_0\otimes \lambda_0)\otimes\ldots\otimes (r_i\otimes\lambda_i))=(1\otimes 1)\otimes(r_0\otimes\s(\lambda_0))\otimes\ldots\otimes (r_i\otimes\s(\lambda_i))\in R[i+1]$$ for
$(r_0\otimes \lambda_0)\otimes\ldots\otimes(r_i\otimes\lambda_i)\in R[i]$.
%
%
If $\psi\colon R\to S$ is a morphism of $k$-algebras, then so is $R\to S\to [\s]_k S$ and we obtain a morphism $[\s]_k\psi \colon [\s]_k R\to [\s]_k S$ of \ks-algebras. For later use, we record a lemma:

\begin{lem} \label{lemma: sigmaization is injective}
Let $k$ be a $\s$-field and let $\psi\colon R\to S$ be an injective morphism of $k$-algebras. Then also $[\s]_k\psi\colon [\s]_kR\to [\s]_k S$ is injective.
\end{lem}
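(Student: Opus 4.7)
The plan is to exploit the explicit description of $[\s]_kR$ as the union of the finite tensor products $R[i] = R \te {\hs R} \te \cdots \te {\hsi R}$, and verify injectivity at each finite stage, then pass to the direct limit.

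First, I would observe that $[\s]_k\psi$ restricts on each $R[i]$ to the tensor product of $k$-algebra maps $\psi \te \hs\psi \te \cdots \te \hsi\psi$, where $\hsi\psi \colon \hsi R \to \hsi S$ denotes the base change of $\psi$ along $\s^i \colon k \to k$. For each $i \geq 0$, the map $\hsi\psi = \psi \te_k k$ is obtained from $\psi$ by tensoring with $k$ (viewed as a $k$-module via $\s^i$), so it is injective because $k$ is a field and hence flat over itself.

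Next I would prove by induction on $i$ that the map $\psi[i] \colon R[i] \to S[i]$ given by $\psi \te \hs\psi \te \cdots \te \hsi\psi$ is injective. The inductive step reduces to the following standard fact: if $A \to B$ and $A' \to B'$ are injective $k$-algebra maps, then $A \te_k A' \to B \te_k B'$ is injective. This holds because $k$ is a field, so every $k$-module is flat; tensoring the injection $A \hookrightarrow B$ with $A'$ keeps it injective, and likewise for the second factor, and the composition remains injective.

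Finally, the description of $[\s]_kR$ as the union of the $R[i]$'s (with transition maps $a \mapsto a \te 1$, which are themselves injective and compatible with the $\psi[i]$'s) identifies $[\s]_kR$ with the filtered colimit $\varinjlim_i R[i]$, and similarly for $S$. Since filtered colimits preserve injectivity, $[\s]_k\psi = \varinjlim_i \psi[i]$ is injective. There is no real obstacle here; the only thing to be careful about is that we work over a field $k$, which gives flatness of every $k$-module for free and makes the finite-stage argument immediate.
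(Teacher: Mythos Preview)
Your proposal is correct and follows the same approach as the paper: both identify the restriction of $[\s]_k\psi$ to each $R[i]$ as the tensor product $\psi \otimes {\hs\psi} \otimes \cdots \otimes {\hsi\psi}$ and note that this is injective. The paper's proof is a single terse sentence, whereas you have spelled out the standard justifications (flatness over a field, passage to the filtered colimit), but the argument is the same.
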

\begin{proof}
	 The restriction of $[\s]_k\psi$ to $R[i]$ is $\psi\otimes {\hs \psi}\otimes\ldots\otimes {\hsi \psi}$ which is injective. Here, ${\hsi \psi}\colon {\hsi R}\to {\hsi S}$ is obtained from $\psi \colon R\to S$ by base change via $\s^i\colon k\to k$.
\end{proof}	
Any affine scheme $\X$ of finite type over $k$ can be interpreted as a $\s$-variety. Indeed, the functor $[\s]_k\X$  from the category of \ks-algebras to the category of sets defined by $R\rightsquigarrow \X(R)$ is a $\s$-variety. If $\X=\Spec(k[\X])$, then $k\{[\s]_k\X\}=[\s]_k k[\X]$.
If $\f\colon \X\to \Y$ is a morphism of affine schemes of finite type over $k$, then $[\s]_k\f\colon [\s]_k\X\to [\s]_k\Y$ defined by $([\s]_k\f)_R=\f_R\colon\X(R)\to\Y(R)$ for any \ks-algebra $R$ is a morphism of $\s$-varieties.
A $\s$-closed $\s$-subvariety $Y$ of $\X$ is a $\s$-closed $\s$-subvariety of $[\s]_k\X$. Such a $Y$ is defined by a $\s$-ideal $\ida$ of $[\s]_kk[\X]=\cup_{i\geq 0}k[\X][i]$. The closed subscheme $Y[i]$ of $\X\times{\hs\X}\times\ldots\times{\hsi\X}$ defined by $\ida\cap k[\X][i]$ is called the \emph{$i$-th order Zariski closure} of $Y$ in $\X$. Here ${\hsi \X}$ denotes the affine scheme obtained from $\X$ by base change via $\s^i\colon k\to k$.
Note that the $i$-th order Zariski closure of $[\s]_k\X$, considered as a $\s$-closed $\s$-subvariety of $\X$, is $\X\times{\hs\X}\times\ldots\times{\hsi\X}$.

A \emph{$\s$-algebraic group} $G$ (over $k$) is a group object in the category of $\s$-varieties over $k$. For example, if $\G$ is an affine group scheme of finite over $k$, then $[\s]_k\G$ is a $\s$-algebraic group over $k$. A \emph{$\s$-closed subgroup} $H$ of a $\s$-algebraic group $G$ is a $\s$-subvariety such that $H(R)$ is a subgroup of $G(R)$ for every \ks-algebra $R$. A $\s$-closed subgroup of an affine group scheme $\G$ of finite type over $k$ is a $\s$-closed subgroup of $[\s]_k\G$. If $\H$ is a closed subgroup of $\G$, then $[\s]_k\H$ is a $\s$-closed subgroup of $[\s]_k\G$.
%

\begin{lem}[{\cite[Prop. 2.16]{Wibmer:FinitenessProperties}}] \label{lemma: linearization}
	Every $\s$-algebraic group is isomorphic to a $\s$-closed subgroup of $\GL_n$ for some $n\geq 1$. 
\end{lem}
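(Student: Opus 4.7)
The plan is to adapt the classical proof that every affine algebraic group embeds in some $\GL_n$ via its right regular representation, using a finite-dimensional $k$-subspace $V\subseteq k\{G\}$ that contains a set of $\s$-generators. First I would fix $\s$-generators $f_1,\ldots,f_n\in k\{G\}$, so that $\{\s^i(f_j):i\geq 0,\ 1\leq j\leq n\}$ generates $k\{G\}$ as a $k$-algebra. The comultiplication $\Delta\colon k\{G\}\to k\{G\}\otimes_k k\{G\}$ dual to multiplication on $G$ is a morphism of Hopf \ks-algebras. By Sweedler's finiteness theorem applied to each $f_j$, there is a finite-dimensional $k$-subspace $V\subseteq k\{G\}$ containing all the $f_j$ and satisfying $\Delta(V)\subseteq V\otimes_k k\{G\}$; note that $V$ itself need not be $\s$-stable.

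Next I would choose a $k$-basis $v_1,\ldots,v_N$ of $V$ and write $\Delta(v_j)=\sum_i v_i\otimes a_{ij}$ with $a_{ij}\in k\{G\}$. A computation using coassociativity shows $\Delta(a_{ij})=\sum_l a_{il}\otimes a_{lj}$, and since $G$ acts on $V$ by invertible linear maps, the matrix $(a_{ij})$ is invertible in $\mathrm{Mat}_N(k\{G\})$. Hence sending $x_{ij}\mapsto a_{ij}$ defines a Hopf $k$-algebra morphism $k[\GL_N]\to k\{G\}$, which by the universal property of $[\s]_k$ extends uniquely to a Hopf \ks-algebra morphism $\varphi^*\colon k\{\GL_N\}=[\s]_k k[\GL_N]\to k\{G\}$. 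Dually, this yields a morphism $\varphi\colon G\to\GL_N$ of $\s$-algebraic groups, where $\GL_N$ is interpreted as $[\s]_k\GL_N$.

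Finally I would prove $\varphi^*$ is surjective, equivalently that $\varphi$ is a $\s$-closed embedding. The counit identity $(\epsilon\otimes\Id)\Delta=\Id$ gives $v_j=\sum_i\epsilon(v_i)a_{ij}$, so $V\subseteq\mathrm{span}_k\{a_{ij}\}$; since $\epsilon$ is a morphism of \ks-algebras, applying $\s^l$ yields $\s^l(V)\subseteq\mathrm{span}_k\{\s^l(a_{ij})\}$ for every $l\geq 0$. The image of $\varphi^*$ is the \ks-subalgebra of $k\{G\}$ generated by the $a_{ij}$, hence it contains every $\s^l(V)$, and in particular every $\s^l(f_j)$; these generate $k\{G\}$ as a $k$-algebra, so $\varphi^*$ is surjective. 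The one place that requires care is the Sweedler step, where one must secure $V$ finite-dimensional and a subcomodule containing the chosen $f_j$ without insisting on $\s$-stability; the compatibility with $\s$ is then recovered on passing through $[\s]_k$, so no additional work is needed in the $\s$-setting.
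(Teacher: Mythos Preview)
Your argument is correct and is exactly the standard adaptation of the classical linearization proof to the difference setting: choose finitely many $\s$-generators, place them inside a finite-dimensional subcomodule $V$ via Sweedler's lemma, read off the matrix coefficients $a_{ij}$ to get a Hopf algebra map $k[\GL_N]\to k\{G\}$, extend to $[\s]_k k[\GL_N]\to k\{G\}$ by the universal property, and use the counit identity to show the image contains all $\s^l(f_j)$. Each of your steps goes through; in particular, your observation that $\s$-stability of $V$ is \emph{not} needed is the right one---the $\s$-compatibility is absorbed entirely into the passage through $[\s]_k$.

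The paper itself does not prove this lemma; it simply quotes \cite[Prop.~2.16]{Wibmer:FinitenessProperties}. The argument there is precisely the one you give, so there is nothing to compare: you have reconstructed the intended proof.
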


Let $H_i$, $i\in I$ be a family of $\s$-closed subgroups of a $\s$-algebraic group $G$. Since the intersection of a family of $\s$-closed subgroups of $G$ is a $\s$-closed subgroup of $G$, we see that there exists a smallest $\s$-closed subgroup $\langle H_i \ | \ i\in I\rangle$ such that $H_i$ is contained in $\langle H_i \ | \ i\in I\rangle$ for every $i\in I$. If $G=\langle H_i \ | \ i\in I\rangle$, then $G$ is \emph{generated by the $H_i$'s}.

\subsection{$\s$-Picard-Vessiot theory} \label{subsection PV}
We first recall some basic definitions and results from \cite{DiVizioHardouinWibmer:DifferenceGaloisTheoryOfLinearDifferentialEquations}. A \emph{$\de$-ring} is a ring $R$ together with a derivation $\de\colon R\to R$. An ideal $\ida\subseteq R$ such that $\de(\ida)\subseteq \ida$ is called a \emph{$\de$-ideal}. If every $\de$-ideal of $R$ is trivial, $R$ is called \emph{$\de$-simple}.
The \emph{$\de$-constants} of $R$ are $R^{\de}=\{r\in R \mid \de(r)=0\}$.

A \emph{$\ds$-ring} is a  ring $R$ together with a derivation $\de\colon R\to R$ and a ring endomorphism $\s\colon R\to R$ such that $\de(\s(r))=\hslash\s(\de(r))$ for all $r\in R$ for some fixed unit $\hslash\in R^\de$.
If $R$ is a field, we speak of a $\ds$-field. There are the obvious notions of a morphism of $\ds$-rings, of $\ds$-algebras etc.
A typical example of a $\ds$-field is the field $k(x)$ of rational functions over a $\s$-field $k$, considered as a $\ds$-field with $\de=\frac{d}{dx}$ and $\s\colon k(x)\to k(x)$ extending $\s\colon k\to k$ by $\s(x)=x$.
The $\ds$-field we are primarily interested in is the field $\C(x)$ with derivation $\de=\frac{d}{dx}$
and endomorphism $\s$ given by $\s(f(x))=f(x+1)$.

From now on let $F$ denote a $\ds$-field of characteristic zero and let $k=F^\de$ be the $\s$-field of $\de$-constants of $F$. We consider a linear differential equation $\de(y)=Ay$ with a matrix $A\in F^{n\times n}$.

\begin{Def} \label{def: sPVring}
	A \emph{$\s$-Picard-Vessiot ring\footnote{This definition differs from Definition \cite[Def. 1.2]{DiVizioHardouinWibmer:DifferenceGaloisTheoryOfLinearDifferentialEquations} where the condition $R^\de=k$ is dropped. Definition \ref{def: sPVring} is more convenient for us and \cite[Prop. 1.5]{DiVizioHardouinWibmer:DifferenceGaloisTheoryOfLinearDifferentialEquations} shows that with our definition, $\s$-Picard-Vessiot rings correspond to $\s$-Picard-Vessiot extensions.}} for $\de(y)=A y$ is an $F$-$\ds$-algebra $R$ such that
	\begin{enumerate}
		\item there exists a matrix $Y\in\GL_n(R)$ with $\de(Y)=AY$ and $R=F\{Y,1/\det(Y)\}$,
		\item $R$ is $\de$-simple and
		\item $R^\de=k$. 
	\end{enumerate} 
\end{Def}
The definition of a (classical) Picard-Vessiot ring for $\de(y)=Ay$ is identical to the above definition other than $R=F\{Y,1/\det(Y)\}$ replaced with $R=F[Y,1/\det(Y)]$. In practice, $\s$-Picard-Vessiot rings often arise as in the following lemma:

\begin{lem} \label{lemma: crit}
	Let $E/F$ be an extension of $\ds$-fields such that $E^\de=F^\de$. If $A\in F^{n\times n}$ and $Y\in \GL_n(E)$ are such that $\de(Y)=AY$, then $R=F\{Y,1/\det(Y)\}$ is a $\s$-Picard-Vessiot ring for $\de(y)=Ay$.
\end{lem}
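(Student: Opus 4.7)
My plan is to verify the three conditions of Definition~\ref{def: sPVring} for the $F$-subalgebra $R = F\{Y, 1/\det(Y)\}$ of $E$. First, I would check that $R$ is an $F$-$\ds$-subalgebra of $E$: closure under $\s$ is automatic from the notation $F\{\cdot\}$, while closure under $\de$ follows from iterating the commutation $\de\s = \hslash\s\de$ to obtain $\de(\s^i(Y)) = c_i\,\s^i(A)\,\s^i(Y)$ for $c_i := \hslash\cdot\s(\hslash)\cdots\s^{i-1}(\hslash) \in F$, together with the usual formula for $\de(1/\det\s^i(Y))$. Condition~(i) is then immediate, and condition~(iii) follows at once: $R\subseteq E$ gives $R^\de \subseteq E^\de = F^\de = k$, while $k\subseteq F\subseteq R$ supplies the reverse inclusion.

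The real work is in condition~(ii), $\de$-simplicity of $R$. My strategy is to reduce to the classical (non-$\s$) Picard-Vessiot setting by truncating in $\s$. For each $d\geq 0$, put $R_d := F[\s^i(Y),\, 1/\det\s^i(Y) : 0\leq i\leq d]$; the computation above shows $R_d$ is $\de$-stable, and clearly $R = \bigcup_{d\geq 0} R_d$. Setting $Z_d := \operatorname{diag}(Y, \s(Y), \ldots, \s^d(Y)) \in \GL_{(d+1)n}(E)$ yields $\de(Z_d) = A_d Z_d$ for the block-diagonal matrix $A_d := \operatorname{diag}(A, c_1\s(A), \ldots, c_d\s^d(A)) \in F^{(d+1)n \times (d+1)n}$, and one checks $R_d = F[Z_d, 1/\det(Z_d)]$. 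Thus $R_d$ presents itself as a classical Picard-Vessiot candidate for $\de(y) = A_d y$, sitting inside the $\de$-field $E$ with $E^\de = F^\de$, and standard differential Galois theory then forces $R_d$ to be $\de$-simple.

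Given this, $\de$-simplicity of $R$ itself is a direct-limit argument: for any non-zero $\de$-ideal $\ida \subseteq R$, one picks $0\neq a\in \ida$ and some $d$ with $a\in R_d$; then $\ida\cap R_d$ is a non-zero $\de$-ideal of $R_d$, hence equals $R_d$, so in particular $1\in \ida$ and $\ida = R$.

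I expect the only step warranting care is the appeal to $\de$-simplicity of $R_d$. The precise input needed is that whenever $B\in F^{m\times m}$ and $Z\in \GL_m(L)$ satisfy $\de(Z) = BZ$ inside a $\de$-field extension $L/F$ with $L^\de = F^\de$, the $F$-subalgebra $F[Z, 1/\det(Z)]$ is automatically $\de$-simple. This form of the statement does \emph{not} require algebraic closedness of $k$ and is standard in the Picard-Vessiot literature (see e.g.\ \cite{SingerPut:differential}); with it in hand the lemma is complete.
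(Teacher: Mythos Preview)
Your proof is correct. The paper's own argument is a two-line appeal to \cite{DiVizioHardouinWibmer:DifferenceGaloisTheoryOfLinearDifferentialEquations}: it notes that $\Frac(R)$ is a $\s$-Picard-Vessiot \emph{extension} in the sense of that paper's Definition~1.2, and then invokes Proposition~1.5 there to conclude that $R$ is the associated $\s$-Picard-Vessiot ring. Your route instead verifies Definition~\ref{def: sPVring} from scratch, handling $\de$-simplicity by truncating to the subrings $R_d$ and reducing to the classical fact (available in \cite{SingerPut:differential}) that a ring generated by a fundamental matrix inside a no-new-constants $\de$-field extension is automatically $\de$-simple. This is more self-contained and makes the mechanism explicit; the paper's citation is shorter but black-boxes exactly this step inside the reference. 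It is also worth noting that the truncations $R_d$ you introduce reappear verbatim in Proposition~\ref{prop: relation to clasical PV}, where they are identified as classical Picard-Vessiot rings---so your argument anticipates and dovetails with that later structure rather than working against the grain of the paper.
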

\begin{proof}
	The field of fractions of $R$ is a $\s$-Picard-Vessiot extension for $\de(y)=Ay$ in the sense of \cite[Def. 1.2]{DiVizioHardouinWibmer:DifferenceGaloisTheoryOfLinearDifferentialEquations}. It thus follows from \cite[Prop. 1.5]{DiVizioHardouinWibmer:DifferenceGaloisTheoryOfLinearDifferentialEquations} that $R$ is a $\s$-Picard-Vessiot ring for $\de(y)=Ay$.
\end{proof}

A $\s$-Picard-Vessiot ring is an integral domain and $\s$ and $\de$ extend uniquely to the field of fractions of $R$. The field of fractions $E$ of a $\s$-Picard-Vessiot ring is a \emph{$\s$-Picard-Vessiot extension}. The \emph{$\s$-Galois group} $G$ of a $\s$-Picard-Vessiot ring $R/F$ is the functor from the category of \ks-algebras to the category of groups given by 
$$G(S)=\Aut^{\ds}(R\otimes_k S/F\otimes_k S)$$
for every \ks-algebra $S$. Here $R\otimes_k S$ is considered as a $\ds$-ring with $\de$ being the trivial derivation on $S$, i.e., $\de(s)=0$ for $s\in S$. The choice of a fundamental solution matrix $Y\in\GL_n(R)$ for $\de(y)=Ay$ determines a $\s$-closed embedding of $G$ into $\GL_n$. Indeed, for every \ks-algebra $S$ and $g\in G(S)$ there exists a matrix $\phi_S(g)\in\GL_n(S)$ such that $g(Y)=Y\phi_S(g)$. Then $\phi\colon G\to \GL_n$ is a $\s$-closed embedding. We let $\Gal_Y(R/F)$ denote the image $\phi(G)$ of $G$ in $\GL_n$ and call it the $\s$-Galois group of $R/F$ with respect to $Y$. The coordinate ring of $G$ is $k\{G\}=(R\otimes_F R)^\de$ and the canonical map 
\begin{equation} \label{eqn: alg torsor isom} R\otimes_k k\{G\}\to R\otimes_F R
\end{equation}
is an isomorphism.

{
\begin{ex}\label{ex PVR}
 Consider the differential equation $\de(y)=y$ over $\C(x)$ equipped with $\delta=d/dx$ and the shift operator $\sigma$, i.e., $\sigma(f(x))=f(x+1)$. Then $R=\C(x)\{e^x,e^{-x}\}$ is a $\s$-Picard-Vessiot ring for this equation by Lemma \ref{lemma: crit} applied to the field $E$ of meromorphic functions. We compute $\s(e^x)=e^{x+1}=e\cdot e^x$ and conclude $R=\C(x)[e^x,e^{-x}]$. Let $S$ be a $k$-$\s$-algebra and let $\gamma$ be an automorphism on $R\otimes_\C S$ of $(F\otimes_\C S)$-$\delta \sigma$-algebras. Then $\gamma$ is uniquely determined by $\gamma(e^x)$ and as $\gamma$ commutes with $\de$, there exists an $\alpha \in S^\times$ with $\gamma(e^x)=e^x\alpha$. Moreover, $\gamma$ commutes with $\sigma$ and hence $\sigma(\alpha)=\alpha$. Conversely, every $\alpha \in S$ with $\s(\alpha)=\alpha$ gives rise to such an automorphism (here we use that $e^x$ is transcendental over $\C(x)$). We conclude that the $\s$-Galois group of $R$ is the constant subgroup of the multiplicative group: $G(S)=\{\alpha \in S^\times \mid \sigma(\alpha)=\alpha\}$ for all \ks-algebras $S$. 
\end{ex}
}

Let $S$ be a \ks-algebra and $g\in G(S)$. Then $g\colon R\otimes_k S\to R\otimes_k S$ extends to an automorphism $\widetilde{g}\colon \Frac(R\otimes_k S)\to\Frac(R\otimes_k S)$ on the total ring of fractions of $R\otimes_k S$, which includes $E=\Frac(R)$. An element $a\in E$ is \emph{invariant} under $g$ if $\widetilde{g}(a)=a$. For a $\s$-closed subgroup $H$ of $G$, the set of all elements in $E$ that are invariant under $H(S)$ for every $k$-$\s$-algebra $S$ is denoted by $E^H$.

\begin{thm}[{$\s$-Galois correspondence, \cite[Theorem 3.2]{DiVizioHardouinWibmer:DifferenceGaloisTheoryOfLinearDifferentialEquations}}] \label{theo: Galois correspondence}
	The map $H\mapsto E^H$ is an inclusion reversing bijection between the $\s$-closed subgroups of $G$ and the intermediate $\ds$-field of $E/F$. In particular, $E^H=F$ if and only if $H=G$.
\end{thm}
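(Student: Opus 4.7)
The plan is to construct an explicit inverse map $L\mapsto H_L$ and verify bijectivity by pulling everything back to the torsor isomorphism $R\otimes_k k\{G\}\simeq R\otimes_F R$ of \eqref{eqn: alg torsor isom}. First, for an intermediate $\ds$-field $L$ of $E/F$, I would show that $R_L:=R\cdot L\subseteq E$ is itself a $\s$-Picard-Vessiot ring for $\delta(y)=Ay$ over $L$. Since $R$ is $\delta$-simple with $R^\delta=k$, one has $E^\delta=k$ and in particular $E^\delta=L^\delta$, so Lemma~\ref{lemma: crit} applies and delivers the claim together with $\delta$-simplicity of $R_L$. The $\s$-Galois group $H_L$ of $R_L/L$ then embeds canonically as a $\s$-closed subgroup of $G$: for any $\ks$-algebra $S$, an $L$-$\ds$-automorphism of $R_L\otimes_k S$ is determined by its action on the fixed fundamental solution matrix $Y$ via a matrix in $\GL_n(S)$, and the same data describes an $F$-$\ds$-automorphism of $R\otimes_k S$.

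Next, I would translate $\s$-closed subgroups and fixed fields into ideal-theoretic data on $R\otimes_F R$. Via \eqref{eqn: alg torsor isom}, $\s$-closed subgroups of $G$ correspond bijectively to the $\s$-ideals of $k\{G\}$, and hence to certain $\s$-ideals of $R\otimes_F R$ of the form $R\otimes_k\ida$. Dually, an element $a\in E$ (assumed in $R$ after clearing denominators) belongs to $E^H$ precisely when $a\otimes 1-1\otimes a\in R\otimes_F R$ lies in the ideal cutting out $H$. With this dictionary, the correspondence $H\leftrightarrow E^H$ becomes a statement about sub-$\ds$-rings of $R$ versus $\s$-ideals of $k\{G\}$.

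The two identities to verify are $H_{E^H}=H$ and $E^{H_L}=L$. The cleanest route is to first establish the absolute statement $E^G=F$: if $a\in R\setminus F$, the $\s$-ideal generated by $a\otimes 1-1\otimes a$ in $R\otimes_F R$ is nonzero, and via the torsor isomorphism it corresponds to a proper $\s$-ideal $\ida\subsetneq k\{G\}$; any $g\in G(k\{G\}/\ida)$ corresponding to the canonical projection then moves $a$. Applying this absolute fact to the $\s$-Picard-Vessiot ring $R_L/L$ yields $E^{H_L}=L$ immediately. The identity $H_{E^H}=H$ follows formally: both subgroups correspond under the torsor dictionary to the same $\s$-ideal of $k\{G\}$, namely the largest one whose associated sub-$\ds$-ring of $R\otimes_F R$ contains $E^H\otimes 1$.

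The hard part is the absolute identity $E^G=F$, since it is precisely the step where one must produce nontrivial Galois actions from nontrivial invariants. The key technical input is the compatibility of \eqref{eqn: alg torsor isom} with the $\s$-structure together with $\delta$-simplicity of $R$, which together force that the only $F$-$\ds$-subalgebra of $R$ with trivial image in $(R\otimes_F R)/(R\otimes_k\ida)$ for every proper $\s$-ideal $\ida$ of $k\{G\}$ is $F$ itself. Once this absolute step is secured, the relative statements and the inclusion-reversing property are routine consequences of the functoriality of the $\s$-Galois group under change of base field.
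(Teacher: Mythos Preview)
The paper does not prove this theorem; it is quoted verbatim from \cite[Theorem~3.2]{DiVizioHardouinWibmer:DifferenceGaloisTheoryOfLinearDifferentialEquations} and used as a black box. So there is no proof in the paper to compare against. Your outline is in fact close to the argument given in that reference: one shows that $R_L=L\{Y,1/\det(Y)\}$ is a $\s$-Picard-Vessiot ring over each intermediate $\ds$-field $L$ (your use of Lemma~\ref{lemma: crit} is correct here), that its $\s$-Galois group embeds in $G$, and that the torsor isomorphism \eqref{eqn: alg torsor isom} translates invariants into ideal membership in $R\otimes_F R$.

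One step of your sketch deserves more care. You claim that $H_{E^H}=H$ ``follows formally'' because both groups correspond to ``the largest $\s$-ideal of $k\{G\}$ whose associated sub-$\ds$-ring of $R\otimes_F R$ contains $E^H\otimes 1$''. That $H_{E^H}$ corresponds to this maximal ideal is fine, but you have not explained why the defining ideal $\ida_H$ of $H$ is already maximal with this property; a priori there could be a strictly larger $\s$-ideal $\ida'\supsetneq \ida_H$ (hence a strictly smaller subgroup $H'\subsetneq H$) with $E^{H'}=E^H$. Ruling this out is precisely the injectivity of $H\mapsto E^H$, and it is not formal. In the cited reference this is handled by showing that for $H'\subsetneq H$ one can produce, via the surjection $R\otimes_F R\twoheadrightarrow R\otimes_k k\{H\}$ and the fact that $k\{H\}\to k\{H'\}$ has nontrivial kernel, an element of $E$ that is $H'$-invariant but not $H$-invariant. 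Your absolute step $E^G=F$ is the right engine, but it needs to be applied relative to $H$ (not just relative to $G$) to close this gap; equivalently, one needs a $\s$-Hopf-ideal argument rather than a bare $\s$-ideal argument. Once this is in place, the rest of your outline goes through as written.
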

The following proposition will be helpful for constructing explicit examples.

\begin{prop}[{\cite[Prop. 2.15]{DiVizioHardouinWibmer:DifferenceGaloisTheoryOfLinearDifferentialEquations}}] \label{prop: relation to clasical PV}
	Let $R=F\{Y,1/\det{Y}\}$ be a $\s$-Picard-Vessiot ring for $\de(y)=Ay$, where $A\in F^{n\times n}$. Then, $F[Y,\s(Y),\ldots,\s^i(Y),1/\det(Y\ldots\s^i(Y))]$ is a (classical) Picard-Vessiot ring with (classical) Galois group isomorphic to the $i$-th order Zariski closure of $\Gal_Y(R/F)$ in $\GL_n$ for every $i\geq 1$.
\end{prop}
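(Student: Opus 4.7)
The plan is to exhibit $R_i$ as a classical Picard-Vessiot ring by writing down a fundamental solution matrix inside $R$, and then to deduce its Galois group by restricting the $\s$-torsor isomorphism \eqref{eqn: alg torsor isom} to $R_i$. The commutation relation $\de\s=\hslash\s\de$ yields by induction on $j$ that $\de(\s^j Y)=\hslash_j\,\s^j(A)\,\s^j(Y)$, with $\hslash_j:=\hslash\cdot\s(\hslash)\cdots\s^{j-1}(\hslash)\in F^\de$. Hence the block-diagonal matrix $Z:=\operatorname{diag}(Y,\s Y,\ldots,\s^i Y)\in\GL_{n(i+1)}(R)$ is a fundamental solution matrix for $\de(y)=By$ over $F$ with $B:=\operatorname{diag}(A,\hslash\s(A),\ldots,\hslash_i\s^i(A))\in F^{n(i+1)\times n(i+1)}$, and $R_i=F[Z,\det(Z)^{-1}]$ as an $F$-$\de$-subalgebra of $R$. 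From $R_i\subseteq R$ and $R^\de=k$ one has $R_i^\de=k$.

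Next I would restrict the torsor identity to $R_i$. Set $G:=\Gal_Y(R/F)$ and let $X\in\GL_n(k\{G\})$ denote its universal point, so that the canonical iso $\mu\colon R\otimes_k k\{G\}\xrightarrow{\sim}R\otimes_F R$ is characterized by $\mu\bigl((Y\otimes 1)(1\otimes X)\bigr)=1\otimes Y$. Since $\mu$ respects $\s$, applying $\s^j$ gives $\mu\bigl((\s^j Y\otimes 1)(1\otimes\s^j X)\bigr)=1\otimes\s^j Y$ for every $j\geq 0$. By the definition of the $i$-th order Zariski closure via the ideal $\ida\cap k[\GL_n][i]$, the entries of $X,\s X,\ldots,\s^i X$ together with $\det(X)^{-1},\ldots,\det(\s^i X)^{-1}$ generate precisely the coordinate ring $k[G[i]]\subseteq k\{G\}$ of $G[i]$. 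Combining these observations shows that $\mu$ restricts to an isomorphism
\[
R_i\otimes_k k[G[i]]\xrightarrow{\sim}R_i\otimes_F R_i
\]
of $F$-$\de$-algebras with $\de$ acting trivially on $k[G[i]]$.

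From this restricted torsor identity the proposition follows: taking $\de$-invariants on both sides, using $R_i^\de=k$ together with $k$-freeness of $k[G[i]]$, yields $(R_i\otimes_F R_i)^\de\cong k[G[i]]$. By the standard Hopf-algebraic characterization of classical Picard-Vessiot rings (Amano--Masuoka), a torsor identity $R_i\otimes_F R_i\cong R_i\otimes_k H$ with $H$ a $k$-Hopf algebra together with $R_i^\de=k$ is equivalent to $R_i$ being a classical Picard-Vessiot ring with classical Galois group $\Spec(H)$. Applied here with $H=k[G[i]]$, this shows that $R_i$ is a classical Picard-Vessiot ring for $\de(y)=By$ with Galois group $\Spec(k[G[i]])=G[i]$, as claimed. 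The main obstacle I anticipate is the middle step: verifying cleanly that the $k$-subalgebra of $k\{G\}$ generated by the entries of $X,\s X,\ldots,\s^i X$ and their inverse determinants really coincides with $k[G[i]]$, and that $\mu$ restricts as claimed. This amounts to unwinding the definition of the $i$-th order Zariski closure against the description $k\{G\}=[\s]_k k[\GL_n]/\ida$, and is the one place where the specific structure of $G[i]$ enters non-trivially.
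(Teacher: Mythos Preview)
The paper does not give its own proof of this proposition; it is stated with a citation to \cite[Prop.~2.15]{DiVizioHardouinWibmer:DifferenceGaloisTheoryOfLinearDifferentialEquations}, so there is no in-paper argument to compare against. Your proposal is essentially correct and is in fact very close in spirit to how the cited result is proved there.

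Two minor comments. First, you do not need the Amano--Masuoka characterization to see that $R_i$ is a classical Picard--Vessiot ring: since $R_i=F[Z,\det(Z)^{-1}]$ with $\de(Z)=BZ$ and $R_i$ sits inside $E=\Frac(R)$ with $E^\de=k=F^\de$, the classical analogue of Lemma~\ref{lemma: crit} already yields that $R_i$ is a Picard--Vessiot ring for $\de(y)=By$. What remains is only the identification of the Galois group, and for that your restricted torsor identity $R_i\otimes_k k[G[i]]\cong R_i\otimes_F R_i$ immediately gives $(R_i\otimes_F R_i)^\de\cong k[G[i]]$, hence the Galois group is $G[i]$. Second, the ``obstacle'' you flag is harmless: by construction $k\{G\}=[\s]_k k[\GL_n]/\ida=\bigcup_i k[\GL_n][i]/(\ida\cap k[\GL_n][i])$, and the image of $k[\GL_n][i]$ in $k\{G\}$ is precisely the $k$-subalgebra generated by the entries of $X,\s(X),\ldots,\s^i(X)$ and their inverse determinants; this is exactly $k[G[i]]$ by the definition of the $i$-th order Zariski closure. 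That $k[G[i]]$ is a Hopf subalgebra follows from $\Delta(\s^j X)=\s^j X\otimes\s^j X$, so the restriction of $\mu$ is indeed an isomorphism of $R_i$-$k[G[i]]$-comodule algebras, which is all you need.
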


\begin{cor}\label{cor: Zariski closure}
 	Let $R=F\{Y,1/\det{Y}\}$ be a $\s$-Picard-Vessiot ring for $\de(y)=Ay$ and define $R_i=F[Y,\s(Y),\ldots,\s^i(Y),1/\det(Y\ldots\s^i(Y))]$ for $i\geq 1$. Let further $\H\leq \GL_n$ be a connected algebraic group with $\Gal_Y(R/F)\leq [\s]_k\H$. Then $\Gal_Y(R/F)=[\s]_k\H$ if and only if $\dim(R_i)=(i+1)\dim(\H)$ for all $i\geq 1$. 
\end{cor}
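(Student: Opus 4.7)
The plan is to reduce everything to Proposition \ref{prop: relation to clasical PV}. Write $G:=\Gal_Y(R/F)$ and let $G[i]$ denote its $i$-th order Zariski closure in $\GL_n$. The proposition identifies $R_i$ with a classical Picard-Vessiot ring whose (classical) Galois group is $G[i]$; since a classical Picard-Vessiot ring over $F$ has Krull dimension equal to the dimension of its Galois group, this gives $\dim(R_i)=\dim(G[i])$. The statement thus reduces to the purely group-theoretic equivalence: $G=[\s]_k\H$ if and only if $\dim(G[i])=(i+1)\dim(\H)$ for all $i\geq 1$.

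The inclusion $G\leq [\s]_k\H$ yields an inclusion $G[i]\leq \H\times{\hs\H}\times\cdots\times\hsi\H$, where the right-hand side is the $i$-th order Zariski closure of $[\s]_k\H$ identified in the excerpt and has dimension $(i+1)\dim(\H)$. The forward direction is then immediate. For the converse, suppose $\dim(G[i])=(i+1)\dim(\H)$ for every $i\geq 1$. The key geometric input will be that $\H\times{\hs\H}\times\cdots\times\hsi\H$ is connected: each twist $\hsi\H$ is a base change of the connected group scheme $\H$ along the field endomorphism $\s^i\colon k\to k$, and since the formation of $\pi_0$ of an affine group scheme of finite type over a field commutes with base change, each $\hsi\H$ stays connected, whence so does their product. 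A closed subgroup of equal dimension inside an irreducible algebraic group must coincide with the whole group, so $G[i]=\H\times{\hs\H}\times\cdots\times\hsi\H$ for every $i\geq 1$.

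Finally, to upgrade equality at every finite level $\geq 1$ to equality of the $\s$-closed subgroups themselves, I would use that $k\{\GL_n\}=[\s]_kk[\GL_n]=\bigcup_{i\geq 0}k[\GL_n][i]$, so a $\s$-closed subgroup of $\GL_n$ is determined by the intersections of its defining $\s$-ideal with the subrings $k[\GL_n][i]$, i.e.\ by the defining ideals of its Zariski closures $G[i]$. Equality at levels $i\geq 1$ already forces equality at level $0$, since the first projection of the first-order Zariski closure recovers the zeroth. Hence the defining $\s$-ideals of $G$ and $[\s]_k\H$ coincide, giving $G=[\s]_k\H$. The argument is essentially bookkeeping once Proposition \ref{prop: relation to clasical PV} is in place; the only point requiring care is the connectedness of the twisted products $\H\times{\hs\H}\times\cdots\times\hsi\H$, which is what licenses the dimension count to upgrade the inclusion of closed subgroups to an equality---without connectedness a proper closed subgroup of the same dimension could sit inside a disconnected ambient.
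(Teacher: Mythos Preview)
Your proof is correct and follows essentially the same route as the paper's: reduce to $\dim(R_i)=\dim(G[i])$ via Proposition~\ref{prop: relation to clasical PV}, then use connectedness of $\H\times{\hs\H}\times\cdots\times{\hsi\H}$ to pass between equality of dimensions and equality of closed subgroups at each finite level. The paper runs the converse by contrapositive (a proper $\s$-closed containment forces $\mathfrak{a}\cap k[\H][i]\neq 0$ for some $i\geq 1$, hence a strict dimension drop by connectedness), whereas you argue directly; your explicit justification of the connectedness of the twisted product and of why equality for all $i\geq 1$ pins down the full $\s$-ideal is a welcome bit of extra care, but the substance is the same.
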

\begin{proof}
Set $G=\Gal_Y(R/F)$. By Proposition \ref{prop: relation to clasical PV}, $R_i/F$ is a (classical) Picard-Vessiot ring with (classical) Galois group $G[i]$. Hence $\dim(R_i)=\dim(G[i])$ for all $i\geq 1$. If $G=[\s]_k\H$, then $G[i]=\H\times{\hs\H}\times\ldots\times{\hsi \H}\simeq\H^{i+1}$ and thus $\dim(R_i)=\dim(G[i])=(i+1)\dim(\H)$ for all $i\geq 1$. Conversely, assume that $G$ is properly contained in $[\s]_k\H$. Then the defining ideal $\mathfrak a \subseteq [\s]_k k[\H]$ of $G$ as a $\s$-closed subgroup of $[\s]_k\H$ is non-zero and so $\mathfrak a \cap k[\H][i]$ is non-zero for some $i\geq 1$. Hence $G[i]$ is properly contained in $\H\times{\hs\H}\times\ldots\times{\hsi \H}\simeq\H^{i+1}$ and as $\H^{i+1}$ is connected, we conclude $\dim(R_i)=\dim(G[i])<\dim(\H^{i+1})=(i+1)\dim(\H)$.
\end{proof}

For later use we record a lemma about the $\s$-Galois group with respect to a fundamental solution matrix.

\begin{lem}\label{lemma: compositum}
Let $R=F\{Y,1/\det(Y)\}$ be a $\s$-Picard-Vessiot ring for $\de(y)=Ay$. Then the following holds.
\begin{enumerate}
\item For every $B\in \GL_n(F)$, $R$ is a $\s$-Picard-Vessiot ring for $\de(y)=(BAB^{-1}+\de(B)B^{-1})y$ with fundamental solution matrix $BY$ and we have $\Gal_{BY}(R/F)=\Gal_{Y}(R/F)$ as $\s$-closed subgroups of $\GL_n$.
\item Let $F_0\supseteq F_1\supseteq F$ be $\ds$-field extensions with $F_0^\de=F$ and suppose that $R\subseteq F_0$ as an $F$-$\ds$-subalgebra. Then $R_1=F_1\{Y,1/\det(Y)\}\subseteq F_0$ is a $\s$-Picard-Vessiot ring over $F_1$ and $\Gal_{Y}(R_1/F_1)\leq \Gal_{Y}(R/F)$ as $\s$-closed subgroups of $\GL_n$.
\end{enumerate}
\end{lem}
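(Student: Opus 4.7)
My plan for Part (i) is a direct calculation. Using Leibniz and $\de(Y)=AY$, we get $\de(BY)=\de(B)Y+BAY=\bigl(BAB^{-1}+\de(B)B^{-1}\bigr)BY$, so $BY$ is a fundamental solution matrix for the indicated equation. Since $B\in\GL_n(F)$, we have $F\{BY,1/\det(BY)\}=F\{Y,1/\det(Y)\}=R$; the $\de$-simplicity and $R^\de=k$ conditions depend only on the $\ds$-ring $R$ and so carry over. For the embedding into $\GL_n$, any $g\in G(S)$ with $g(Y)=Y\phi_Y(g)$ fixes $B$ (whose entries lie in $F$) and therefore satisfies $g(BY)=BY\phi_Y(g)$, so $\phi_{BY}(g)=\phi_Y(g)$ and the two embedded subgroups coincide inside $\GL_n$.

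For Part (ii), I would first check that $R_1=F_1\{Y,1/\det(Y)\}$ is a $\s$-Picard-Vessiot ring over $F_1$ using Lemma~\ref{lemma: crit} applied to the $\ds$-field extension $F_0/F_1$. The matrix $Y\in\GL_n(R)\subseteq\GL_n(F_0)$ satisfies $\de(Y)=AY$ with $A\in F_1^{n\times n}$, and $R_1$ is by definition $\s$-generated over $F_1$ by the entries of $Y$ and $1/\det(Y)$. Since $F\subseteq F_1\subseteq F_0$ as $\ds$-fields and the $\de$-constants of $F_0$ coincide with the base constants by hypothesis, the intermediate $F_1$ is squeezed to have the same field of $\de$-constants as $F_0$. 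Hence Lemma~\ref{lemma: crit} applies and exhibits $R_1$ as a $\s$-Picard-Vessiot ring for $\de(y)=Ay$ over $F_1$, with the same fundamental solution matrix $Y$.

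For the inclusion $\Gal_Y(R_1/F_1)\le\Gal_Y(R/F)$ as $\s$-closed subgroups of $\GL_n$, it suffices by Yoneda to check it on $S$-points for every \ks-algebra $S$. Given $g\in\Gal_Y(R_1/F_1)(S)$ with $g(Y)=Y\phi(g)$, the subring $R\otimes_kS$ of $R_1\otimes_kS$ is $\s$-generated over $F\otimes_kS$ by the entries of $Y$ and $1/\det(Y)$, and is therefore stabilised by $g$; applying the same reasoning to $g^{-1}$ shows the restriction is bijective, hence an element of $\Gal_Y(R/F)(S)$ with identical image $\phi(g)$ in $\GL_n(S)$. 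The main subtlety is the constants bookkeeping in the second step, i.e.\ confirming that $F_1^\de$ is the correct common constant field so that Lemma~\ref{lemma: crit} is available; once this is settled, the remaining verifications for both parts are essentially formal given the functorial description of $\s$-closed subgroups and of $\s$-Galois groups with respect to a fixed fundamental matrix.
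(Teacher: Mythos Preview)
Your proposal is correct and follows essentially the same route as the paper: for (i), both compute $\de(BY)$ directly and observe that $g$ fixes $B\in\GL_n(F)$ so the two embeddings into $\GL_n$ coincide; for (ii), both invoke Lemma~\ref{lemma: crit} after squeezing to get $F_1^\de=F_0^\de$, and then restrict an automorphism $g$ of $R_1\otimes_kS$ to $R\otimes_kS$ via the relation $g(Y)=Y\phi(g)$, with your $g^{-1}$ argument for bijectivity equivalent to the paper's explicit check that $Y=g(Y\phi(g)^{-1})$ lies in the image.
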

\begin{proof}
For (i), define $\tilde Y=BY$. Then $\de(\tilde Y)\tilde Y^{-1}=\de(BY)Y^{-1}B^{-1}=BAB^{-1}+\de(B)B^{-1}$, since $\de(Y)=AY$. Also $R=F\{Y,1/\det(Y)\}=F\{\tilde Y,1/\det(\tilde Y)\}$. Hence $R$ is a $\s$-Picard-Vessiot ring for $\de(y)=(BAB^{-1}+\de(B)B^{-1})y$ with fundamental solution matrix $BY$. Let $S$ be a $k$-$\s$-algebra and let $g\in \Aut^\ds(R\otimes_k S/F\otimes_k S)$. Then $\tilde Y^{-1}g(\tilde Y)=Y^{-1}B^{-1}Bg(Y)=Y^{-1}g(Y)$ and thus the $S$-rational points of $\Gal_{BY}(R/F)$ and $\Gal_{Y}(R/F)$ coincide as subsets of $\GL_n(S)$ and (i) follows. 

For (ii), first note that $R_1$ is a $\s$-Picard-Vessiot ring over $F_1$ by Lemma \ref{lemma: crit}, since $F_0^\de=F_1^\de$. Let $S$ be a $k$-$\s$-algebra and let $g_1\in \Aut^\ds(R_1\otimes_k S/F_1\otimes_k S)$. Then $g_1(Y)=Y\phi_S(g_1)$ with $\phi_S(g_1) \in \GL_n(S)$. Hence $g_1(Y)$ has entries in $R\otimes_k S$ and thus $g_1$ restricts to an injective homorphism $g$ on $R\otimes_k S$. As $R\otimes_k S$ is generated by the entries of $Y=g(Y\phi_S(g_1)^{-1})$ and $1/\det(Y)=g(1/\det(Y\phi_S(g_1^{-1})))$, $g$ is surjective and thus an element of $\Aut^\ds(R\otimes_k S/F\otimes_k S)$. As $g_1$ is uniquely determined by $g_1(Y)=g(Y)$, we obtain an inclusion $\Aut^\ds(R_1\otimes_k S/F_1\otimes_k S)\hookrightarrow \Aut^\ds(R\otimes_k S/F\otimes_k S)$ that corresponds to a containment $\Gal_{Y}(R_1/F_1)(S)\subseteq \Gal_{Y}(R/F)(S)$ as subsets of $\GL_n(S)$, since $Y^{-1}g_1(Y)=Y^{-1}g(Y)$.
\end{proof}

\section{Multiplicative, additive, and finite cyclic groups as $\s$-Galois groups}
The patching method will allow to break down the task of realizing a given group $G$ as a $\s$-Galois group to the task of realizing generating subgroups as $\s$-Galois groups over certain overfields (see Theorem \ref{thm: patching}). As our task is to realize all algebraic groups as $\s$-Galois groups, we need to find generating subgroups that we can explicitly realize as $\s$-Galois groups. In this section, we first prove that every algebraic group can be generated (as a $\s$-algebraic group) by finitely many subgroups that are each either isomorphic to the multiplicative, or the additive, or a finite cyclic group. Then we show how these three types of groups can be realized as $\s$-Galois groups.  
\subsection{Generating algebraic groups as $\s$-algebraic groups}
It is known (see e.g., \cite[Prop. 3.1 ]{BachmayrHarbaterHartmann:DifferentialGaloisGroupsOverLaurentSeriesFields}) that every linear algebraic group over an algebraically closed field of characteristic zero can be generated by finitely many closed subgroups isomorphic to the additive group $\Ga$, the multiplicative group $\Gm$ or a finite cyclic group. We will need a slightly refined version of this result.

\begin{lem} \label{lemma:strong finite generation for algebraic groups}
	Let $\G$ be a linear algebraic group over an algebraically closed field  of characteristic zero. Then there exist closed subgroups $\H_1,\ldots,\H_r$ of $\G$ such that
	\begin{enumerate}
		\item each $\H_i$ is isomorphic to either $\Ga$, $\Gm$ or a finite cyclic group and
		\item  for some $n\geq 1$ and $(i_1,\ldots,i_n)\in\{1,\ldots,r\}^n$ the multiplication map \\ $\f\colon\X=\H_{i_1}\times\ldots\times \H_{i_n}\to \G$ is surjective.
	\end{enumerate}
\end{lem}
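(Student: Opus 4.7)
The plan is to refine the standard generation result \cite[Prop. 3.1]{BachmayrHarbaterHartmann:DifferentialGaloisGroupsOverLaurentSeriesFields} so that a specific iterated multiplication map is surjective. That result already provides closed subgroups $\H_1, \ldots, \H_r$ of $\G$ of the three allowed isomorphism types that generate $\G$ as an algebraic group. By possibly enlarging this family I would first arrange that the connected members among them (those isomorphic to $\Ga$ or $\Gm$) already generate $\G^\circ$ as an algebraic group, while the finite cyclic members have images generating the finite group $\G/\G^\circ$. This is harmless because $\G^\circ$ is generated by its one-dimensional connected subgroups, and any generating set of $\G/\G^\circ$ lifts to finite-order elements of $\G$ in characteristic zero.

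First I would treat the identity component $\G^\circ$. Each connected $\H_i$ is an irreducible closed subvariety of $\G^\circ$ containing the identity, so a classical theorem of Borel on subgroups generated by irreducible subvarieties provides a sequence $j_1, \ldots, j_p$ of indices of connected $\H_i$'s such that the multiplication map $\H_{j_1} \times \cdots \times \H_{j_p} \to \G^\circ$ is surjective. The inverse signs appearing in Borel's statement are unnecessary here because each $\H_i$ is a subgroup, hence closed under inversion.

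Next I would handle the finite quotient $\G/\G^\circ$ by a combinatorial padding argument. The images in $\G/\G^\circ$ of generators of the finite cyclic $\H_i$'s generate the finite group $\G/\G^\circ$; hence for every element $\bar{g} \in \G/\G^\circ$ one can fix a positive-length word in these images realizing $\bar{g}$, avoiding inverses by using that any $\bar{h}^{-1}$ is a positive power of $\bar{h}$. Concatenating the finitely many such words over all $\bar{g} \in \G/\G^\circ$ yields a single sequence $l_1, \ldots, l_q$ of indices of finite cyclic $\H_i$'s such that the product set $\H_{l_1} \cdots \H_{l_q}$ meets every coset of $\G^\circ$: to realize a given $\bar{g}$, at positions lying in the subword expressing $\bar{g}$ one picks the appropriate generator, and at all other positions one picks the identity of the corresponding $\H_i$. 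This uses crucially that each $\H_i$ contains the identity, so padding with identities is free.

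Concatenating the two sequences gives $(i_1, \ldots, i_n) = (j_1, \ldots, j_p, l_1, \ldots, l_q)$, and the corresponding multiplication map on $\X = \H_{i_1}\times\cdots\times\H_{i_n}$ has image containing $\G^\circ \cdot T = \G$, where $T$ is a complete set of coset representatives of $\G^\circ$ produced by the second half of the product. The main obstacle I expect is the mild refinement of \cite[Prop. 3.1]{BachmayrHarbaterHartmann:DifferentialGaloisGroupsOverLaurentSeriesFields} needed to cleanly separate the generators of $\G^\circ$ from those of the component group; once that separation is in place, Borel's product theorem handles the connected part and the padding argument handles the finite part.
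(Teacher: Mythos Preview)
Your argument is correct and follows the same two-step strategy as the paper: treat $\G^\circ$ via its one-dimensional connected subgroups and the product theorem for connected groups generated by irreducible subvarieties (the paper cites \cite[Cor.~2.2.7]{Springer} for this, which is exactly your ``Borel's theorem''), then handle the finite quotient $\G/\G^\circ$ separately. The only substantive difference is in the second step. The paper invokes the Borel--Serre lemma \cite[Lemma~5.11]{BorelSerre} to produce in one stroke a finite subgroup $\mathcal{W}\leq\G$ meeting every connected component, and then simply remarks that any finite group is generated ``in the strong sense of (ii)'' by cyclic subgroups; your padding argument is precisely an explicit unpacking of that remark. You instead lift individual elements of $\G/\G^\circ$ to finite-order elements of $\G$ and pad. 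That lifting claim is correct but not free: you should either cite Borel--Serre (in which case your proof collapses to the paper's) or justify it directly---for instance, the Zariski closure of an arbitrary lift is commutative, hence in characteristic zero decomposes as (torus) $\times$ (finite abelian) $\times$ (unipotent vector group), and the connected torus and unipotent factors already lie in $\G^\circ$, so the finite factor provides the desired finite-order lift. With that addition your proof is complete, and the two approaches buy essentially the same thing.
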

\begin{proof}
	It is shown in the proof of \cite[Prop. 3.1]{BachmayrHarbaterHartmann:DifferentialGaloisGroupsOverLaurentSeriesFields} that the identity component $\G^o$ of $\G$ can be generated by finitely many copies of $\Ga$'s and $\Gm$'s. According to \cite[Cor.~ 2.2.7]{Springer}, Condition (ii) above is automatically satisfied for generating subgroups that are connected. So we can find finitely many closed subgroups $\H_1,\ldots,\H_s$ of $\G^o$ isomorphic to either $\Ga$ or $\Gm$ such that the multiplication map from $\H_1\times\ldots\times\H_s$ to $\G^0$ is surjective.
	According to a result of Borel and Serre (\cite[Lemma 5.11]{BorelSerre}), there exists a finite subgroup $\mathcal{W}$ of $\G$ that meets every connected component of $\G$. Clearly $\mathcal{W}$, is generated by finite cyclic groups $\H_{s+1},\ldots,\H_r$, even in the strong sense of (ii). Thus the subgroups $\H_1,\ldots,\H_r$ satisfy condition (ii).
\end{proof}

We now transfer the above result to $\s$-algebraic groups.

\begin{prop}\label{prop: generating}
	Let $k$ be an algebraically closed $\s$-field of characteristic zero and let $\G$ be a linear algebraic group over $k$. Then the $\s$-algebraic group $G=[\s]_k\G$ can be generated by finitely many $\s$-closed subgroups isomorphic to $[\s]_k\H$, where $\H$ is $\Gm$, $\Ga$ or a finite cyclic group.
\end{prop}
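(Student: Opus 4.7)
The plan is to leverage Lemma \ref{lemma:strong finite generation for algebraic groups} at the level of algebraic groups and then transfer the strong generation statement (multiplication map surjective) to the difference setting using Lemma \ref{lemma: sigmaization is injective}. Concretely, apply Lemma \ref{lemma:strong finite generation for algebraic groups} to obtain closed subgroups $\H_1,\dots,\H_r$ of $\G$, each isomorphic to $\Ga$, $\Gm$ or a finite cyclic group, together with an $n\geq 1$ and indices $(i_1,\dots,i_n)$ such that the multiplication map $\f\colon\X=\H_{i_1}\times\cdots\times\H_{i_n}\to\G$ is surjective. The target will be to show that $H:=\langle [\s]_k\H_1,\dots,[\s]_k\H_r\rangle$ equals $[\s]_k\G$.

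First I would check that the functor $[\s]_k$ commutes with products, so that $[\s]_k\X=[\s]_k\H_{i_1}\times\cdots\times[\s]_k\H_{i_n}$. This is essentially the fact that $[\s]_k$ is left adjoint to the forgetful functor from $k$-$\s$-algebras to $k$-algebras (which is clear from the universal property stated in the excerpt), so it preserves coproducts, i.e., tensor products of coordinate rings. Applying $[\s]_k$ to $\f$ then yields a morphism $[\s]_k\f\colon [\s]_k\X\to [\s]_k\G$ of $\s$-varieties.

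Next I would show that the $\s$-variety image $[\s]_k\f([\s]_k\X)$ equals all of $[\s]_k\G$. Since $\f$ is surjective on $k$-points, $\G$ is reduced (characteristic zero) and $k$ is algebraically closed, the dual map $\f^*\colon k[\G]\to k[\X]$ is injective. Lemma \ref{lemma: sigmaization is injective} then gives that $[\s]_k\f^*\colon [\s]_kk[\G]\to [\s]_kk[\X]$ is also injective, so the kernel defining $[\s]_k\f([\s]_k\X)$ as a $\s$-closed $\s$-subvariety of $[\s]_k\G$ is zero, i.e., $[\s]_k\f([\s]_k\X)=[\s]_k\G$.

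Finally, I would observe that $[\s]_k\f$ factors through $H$: for any \ks-algebra $S$ and any $S$-point $(h_1,\dots,h_n)$ of $[\s]_k\X$ with $h_j\in [\s]_k\H_{i_j}(S)\subseteq H(S)$, the image $h_1\cdots h_n$ lies in $H(S)$ since $H(S)$ is a subgroup of $[\s]_k\G(S)$. By the minimality property of $[\s]_k\f([\s]_k\X)$ among $\s$-closed $\s$-subvarieties through which $[\s]_k\f$ factors, we conclude $[\s]_k\G=[\s]_k\f([\s]_k\X)\subseteq H\subseteq [\s]_k\G$, hence equality. The main conceptual subtlety is the product-preservation of $[\s]_k$ together with the passage from set-theoretic surjectivity of $\f$ to injectivity of $[\s]_k\f^*$; once these are in place the argument is formal.
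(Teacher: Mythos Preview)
Your proposal is correct and follows essentially the same route as the paper: apply Lemma~\ref{lemma:strong finite generation for algebraic groups}, pass to $[\s]_k\f$, use Lemma~\ref{lemma: sigmaization is injective} to conclude that $[\s]_k\f^*$ is injective so that the $\s$-variety image of $[\s]_k\f$ is all of $G$, and then observe that this image is contained in $\langle [\s]_k\H_i\rangle$. Your write-up is slightly more explicit than the paper's (spelling out why $[\s]_k$ preserves products and why surjectivity of $\f$ yields injectivity of $\f^*$), but the argument is the same.
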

\begin{proof}
	Let $\H_1,\ldots,\H_r$ and $\f\colon \X\to \G$ be as in Lemma \ref{lemma:strong finite generation for algebraic groups} and consider the morphism $[\s]_k\f\colon X\to G$ of $\s$-varieties, where $X=[\s]_k\X=[\s]_k\H_{i_1}\times\ldots\times  [\s]_k\H_{i_r}$. Then $[\s]_k\f$ is the multiplication map, hence $([\s]_k\f)(X)\subseteq  \langle [\s]_k\H_i|\ i=1,\ldots,r\rangle\subseteq G$. On the other hand, $([\s]_k\f)(X)$ is the $\s$-closed $\s$-subvariety of $G$ defined by the kernel of $([\s]_k\f)^*=[\s]_k\f^*\colon k\{G\}\to k\{X\}$. Since $\f^*\colon k[\G]\to k[\X]$ is injective, also $[\s]_k\f^*\colon k\{G\}\to k\{X\}$ is injective (Lemma \ref{lemma: sigmaization is injective})
	and so $([\s]_k\f)(X)=G$.
	Thus $G=\langle [\s]_k\H_i|\ i=1,\ldots,r\rangle$ as desired.
	\end{proof}

\subsection{Realization of $\mathbb G_m$, $\mathbb G_a$ and finite cyclic groups as $\s$-Galois groups}
In this subsection, we present criteria for when a $\s$-Picard-Vessiot ring has a $\s$-Galois group isomorphic to the additive group $\Ga$, the multiplicative group $\Gm$, or a finite cyclic group (all interpreted as $\s$-algebraic groups). We let $F$ denote a $\ds$-field of characteristic zero and we let $k=F^\de$ be its $\s$-field of $\de$-constants. 

We begin with the multiplicative case.

\begin{prop}\label{prop: Gm}
Let $L/F$ be a $\ds$-field extension with $L^\de=F^\de$. Let $y\in L$ be an element such that $\de(y)y^{-1}\in F$. Then $F\{y,y^{-1}\}\subseteq L$ is a $\s$-Picard-Vessiot ring with $\s$-Galois group $G$ isomorphic to a $\s$-closed subgroup of the multiplicative group $\Gm$. We have $G\simeq[\s]_k\Gm$ if and only if $y,\s(y),\s^2(y),\dots$ are algebraically independent over $F$. 
\end{prop}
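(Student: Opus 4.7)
The plan is to present $R := F\{y, y^{-1}\}$ as the $\s$-Picard-Vessiot ring of an explicit scalar differential equation, and then to read off when the $\s$-Galois group is the full group $[\s]_k\Gm$ by means of the Zariski-closure criterion from Corollary \ref{cor: Zariski closure}.

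First I would set $a := \de(y)y^{-1} \in F$. The hypothesis forces $y$ to be invertible in $L$, and $\de(y) = ay$. Thus $y$, viewed as a $1\times 1$ matrix, is a fundamental solution in $\GL_1(L)$ of the scalar linear differential equation $\de(y) = ay$ over $F$. Since $L^\de = F^\de$ by hypothesis, Lemma \ref{lemma: crit} immediately yields that $R = F\{y, 1/\det(y)\}$ is a $\s$-Picard-Vessiot ring for this equation. Its $\s$-Galois group $G$, taken with respect to the fundamental solution $Y = y$, is then by construction a $\s$-closed subgroup of $\GL_1 = [\s]_k\Gm$.

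For the second assertion I would apply Corollary \ref{cor: Zariski closure} with $\H = \Gm \leq \GL_1$, which is connected of dimension one. Writing
$$R_i = F\bigl[y, \s(y), \ldots, \s^i(y), 1/(y\s(y)\cdots\s^i(y))\bigr] \subseteq L,$$
the corollary reduces the condition $G = [\s]_k\Gm$ to the requirement $\dim R_i = i+1$ for every $i \geq 1$. Since $R_i$ is a subring of the field $L$ it is an integral domain, so $\dim R_i = \trd(\Frac R_i/F) = \trd\bigl(F(y,\s(y),\ldots,\s^i(y))/F\bigr)$, and this transcendence degree is at most $i+1$, with equality if and only if $y, \s(y), \ldots, \s^i(y)$ are algebraically independent over $F$. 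Letting $i$ range over the positive integers, this is the same as asking that the whole sequence $y, \s(y), \s^2(y), \ldots$ be algebraically independent over $F$, which gives the claimed equivalence.

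I do not foresee any genuine obstacle. The realization as a $\s$-Picard-Vessiot ring is handed to us by Lemma \ref{lemma: crit}, and the passage from Krull dimension to transcendence degree is standard for finitely generated integral $F$-algebras; the only thing that needs care is matching the notation of Corollary \ref{cor: Zariski closure} in the one-dimensional case $n=1$, $\H = \Gm$, where the determinant in the definition of $R_i$ collapses to the product $y\s(y)\cdots\s^i(y)$.
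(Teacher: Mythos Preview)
Your proposal is correct and follows essentially the same approach as the paper: set $a=\de(y)y^{-1}$, invoke Lemma \ref{lemma: crit} to obtain the $\s$-Picard-Vessiot ring for $\de(y)=ay$ with $\s$-Galois group inside $\GL_1=\Gm$, and then apply Corollary \ref{cor: Zariski closure} with $\H=\Gm$ to reduce the equality $G=[\s]_k\Gm$ to $\dim(R_i)=i+1$ for all $i$, which is exactly the algebraic independence of $y,\s(y),\ldots,\s^i(y)$. Your spelling out of the passage from Krull dimension to transcendence degree is a bit more detailed than the paper's ``clearly'', but the argument is the same.
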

\begin{proof}
Define $a=\de(y)y^{-1} \in F$. By Lemma \ref{lemma: crit}, $R=F\{y,y^{-1}\}$ is a $\s$-Picard Vessiot ring for the differential equation $\de(y)=ay$. Thus $\Gal_y(R/F)$ is a $\s$-closed subgroup of $\GL_1=\Gm$. Set $G=\Gal_y(R/F)$. 

For every $i\geq 1$, define $R_i=F[y,\s(y),\ldots,\s^i(y), 1/(y\ldots\s^i(y))]$. By Corollary \ref{cor: Zariski closure}, $G=[\s]_k\Gm$ if and only if $\dim(R_i)=i+1$ for all $i\geq 1$. Clearly, $\dim(R_i)=i+1$ holds if and only if $y,\s(y),\s^2(y),\dots\s^i(y)$ are algebraically independent and the claim follows.
\end{proof}
We next treat the additive case.

\begin{prop}\label{prop: Ga}
Let $L/F$ be a $\ds$-field extension with $L^\de=F^\de$. Let $y\in L$ be an element such that $\de(y)\in F$. Then $F\{y\}\subseteq L$ is a $\s$-Picard-Vessiot ring with $\s$-Galois group $G$ isomorphic to a $\s$-closed subgroup of the additive group $\Ga$. We have $G\simeq[\s]_k\Ga$ if and only if $y,\s(y),\s^2(y),\dots$ are algebraically independent over $F$. 
\end{prop}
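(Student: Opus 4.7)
The plan is to mirror the proof of Proposition \ref{prop: Gm} for the multiplicative case, realizing the situation as a linear differential equation on $\GL_2$ whose fundamental solution matrix has unipotent form. Concretely, set $a=\de(y)\in F$, and take
\[
A=\begin{pmatrix} 0 & a \\ 0 & 0 \end{pmatrix}\in F^{2\times 2}, \qquad Y=\begin{pmatrix} 1 & y \\ 0 & 1 \end{pmatrix}\in\GL_2(L).
\]
Then $\de(Y)=AY$ and $\det(Y)=1$, so $F\{Y,1/\det(Y)\}=F\{y\}$. Since $L^\de=F^\de$, Lemma \ref{lemma: crit} applies and shows that $R=F\{y\}\subseteq L$ is a $\s$-Picard-Vessiot ring for $\de(y)=Ay$.

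Next I would identify $\Ga$ with the closed unipotent subgroup
\[
\H=\left\{\begin{pmatrix} 1 & * \\ 0 & 1\end{pmatrix}\right\}\leq \GL_2
\]
and check that $\Gal_Y(R/F)\subseteq [\s]_k\H\simeq[\s]_k\Ga$. For any \ks-algebra $S$ and any $g\in\Gal_Y(R/F)(S)$, the relation $g(Y)=Y\phi_S(g)$ forces
\[
\phi_S(g)=Y^{-1}g(Y)=\begin{pmatrix} 1 & g(y)-y \\ 0 & 1\end{pmatrix},
\]
so $\phi_S(g)\in\H(S)$. Thus $G=\Gal_Y(R/F)$ is a $\s$-closed subgroup of $[\s]_k\Ga$, proving the first claim.

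For the second claim, I would invoke Corollary \ref{cor: Zariski closure} with $\H=\Ga$. Setting
\[
R_i=F[Y,\s(Y),\ldots,\s^i(Y),1/\det(Y\cdots\s^i(Y))],
\]
the determinant factors are all $1$, so $R_i=F[y,\s(y),\ldots,\s^i(y)]$. Since $\Ga$ is connected and one-dimensional, Corollary \ref{cor: Zariski closure} gives that $G\simeq[\s]_k\Ga$ if and only if $\dim(R_i)=(i+1)\dim(\Ga)=i+1$ for every $i\geq 1$. But $\dim F[y,\s(y),\dots,\s^i(y)]=i+1$ precisely when $y,\s(y),\dots,\s^i(y)$ are algebraically independent over $F$, and this holding for all $i\geq 1$ is equivalent to $y,\s(y),\s^2(y),\dots$ being algebraically independent over $F$. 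This completes the proof.

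There is no real obstacle here, as the argument is entirely parallel to the multiplicative case; the only point requiring a moment's care is the explicit identification of $\Ga$ as a closed subgroup of $\GL_2$ so that Corollary \ref{cor: Zariski closure} is applicable and the $\s$-Galois group $\Gal_Y(R/F)$ is manifestly contained in $[\s]_k\Ga$ rather than merely in $[\s]_k\GL_2$.
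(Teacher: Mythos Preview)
Your proof is correct and follows essentially the same route as the paper: the same matrices $A$ and $Y$, the same appeal to Lemma~\ref{lemma: crit}, the same verification that $\Gal_Y(R/F)$ lands in the unipotent copy of $\Ga$ inside $\GL_2$, and the same application of Corollary~\ref{cor: Zariski closure} to the rings $R_i=F[y,\s(y),\ldots,\s^i(y)]$. The only cosmetic difference is that you compute $\phi_S(g)=Y^{-1}g(Y)$ directly, whereas the paper writes out $g(Y)=Y\phi_S(g)$ entrywise and reads off the constraints on the matrix entries of $\phi_S(g)$.
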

\begin{proof}
Define $a=\de(y) \in F$ and define matrices
$$A= \left(\begin{array}{cc}
0 & a\\
0 & 0
\end{array}\right) \in F^{2\times 2}, \ Y= \left(\begin{array}{cc}
1 & y\\
0 & 1
\end{array}\right) \in \GL_2(R). $$
Then $R=F\{Y,1/\det(Y)\}=F\{y\}$ is a $\s$-Picard-Vessiot ring for the differential equation $\de(y)=Ay$ by Lemma \ref{lemma: crit}.   
The $\s$-Galois group of $R$ is isomorphic to a $\s$-closed subgroup of $\Ga$. Indeed, if $S$ is a \ks-algebra, $g\in\Aut^{\ds}(R\otimes_k S/F\otimes_kS)$ and $\f\colon G\to \GL_2$ is the $\s$-closed embedding associated with the choice of $Y$, then
$$\left(\begin{array}{cc}
1 & g(y)\\
0 & 1
\end{array}\right)=g(Y)=Y\f_S(g)=\left(\begin{array}{cc}
g_{11}+yg_{21} & g_{12}+yg_{22}\\
g_{21} & g_{22}
\end{array}\right), $$
where $$\f_S(g)=\left(\begin{array}{cc}
g_{11} & g_{12}\\
g_{21} & g_{22}
\end{array}\right)\in\GL_2(S).
$$
Thus, $g_{21}=0$ and $g_{11}=g_{22}=1$. This shows that $\f(G)=\Gal_Y(R/F)$ is contained in the $\s$-closed subgroup $H\simeq [\s]_k\Ga$ of $\GL_2$ defined by $$H(S)=\left\{ \left(\begin{array}{cc}
1 & s\\
0 & 1
\end{array}\right)\ \Big|\ s\in S\right\}$$ 
for any \ks-algebra $S$.

For every $i\geq 1$, consider the $F$-$\de$-algebra $$R_i=F[y,\s(y),\ldots,\s^i(y)]=F[Y,\s(Y),\ldots,\s^i(Y),1/\det(Y\ldots\s^i(Y))].$$ By Corollary \ref{cor: Zariski closure}, $\Gal_Y(R/F)=H$ holds if and and only if $\dim(R_i)=i+1$ for all $i\geq 1$. Clearly, $\dim(R_i)=i+1$ holds if and only if $y,\s(y),\s^2(y),\dots, \s^i(y)$ are algebraically independent and the claim follows. 
\end{proof}

Finally, we treat the case of a finite cyclic group.

\begin{prop}\label{prop: cyclic}
Let $L/F$ be a $\ds$-field extension with $L^\de=F^\de$ algebraically closed. Let $y\in L$ be such that $y^d \in F$ for some $d\in \N$. Then $F\{y,y^{-1}\}\subseteq L$ is a $\s$-Picard-Vessiot ring with $\s$-Galois group $G$ isomorphic to a $\s$-closed subgroup of $\H$, where $\H$ is the finite cyclic group of order $d$ (considered as an algebraic group over $k$). We have $G\simeq[\s]_k\H$ if and only if for every $i\in \N$, $\s^i(y)$ has degree $d$ over 
$F(y,\s(y),\dots,\s^{i-1}(y))$.  
\end{prop}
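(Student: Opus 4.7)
\medskip
\noindent\textbf{Plan.}
First, since $y^d \in F$, differentiating yields $\de(y^d) = d y^{d-1}\de(y) \in F$, and hence $a := \de(y)/y = \de(y^d)/(dy^d) \in F$. Thus $y \in L^\times$ is a nonzero solution of the scalar equation $\de(z) = az$ over $F$, and because $L^\de = F^\de$, Lemma \ref{lemma: crit} shows that $R := F\{y, y^{-1}\} \subseteq L$ is a $\s$-Picard-Vessiot ring for this equation with $\s$-Galois group $G := \Gal_y(R/F) \leq \GL_1 = \Gm$. Identifying $\H \subseteq \Gm$ with the kernel $\mu_d$ of $z \mapsto z^d$, I observe that for every $k$-$\s$-algebra $S$ and every $g \in G(S)$ the element $\phi_S(g) = y^{-1}g(y) \in S^\times$ satisfies $\phi_S(g)^d = y^{-d}g(y^d) = 1$ because $y^d \in F$ is fixed; hence $G \leq [\s]_k\H$.

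For the equivalence, the strategy is to transfer the condition $G = [\s]_k\H$ to a dimension statement about the classical Picard-Vessiot rings $R_i := F[y, \s(y), \dots, \s^i(y), 1/(y\s(y)\cdots\s^i(y))]$ provided by Proposition \ref{prop: relation to clasical PV}: each $R_i$ is a classical Picard-Vessiot ring with classical Galois group $G[i]$ equal to the $i$-th order Zariski closure of $G$ in $\GL_1$. Since $\H = \mu_d$ is cut out by $z^d - 1$ over the prime field, $\hsi\H = \H$ for every $i$, so the $i$-th order Zariski closure of $[\s]_k\H$ itself is $\H^{i+1}$, a finite \'etale group scheme of order $d^{i+1}$ over $k$. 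In particular, $G = [\s]_k\H$ if and only if $G[i] = \H^{i+1}$ for every $i \geq 0$, and because the finite group scheme $\H^{i+1}$ is \'etale (characteristic zero), this equality is detected by $\dim_k k[G[i]] = d^{i+1}$.

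To convert this into a statement about $R_i$, I invoke the classical analogue of the torsor isomorphism \eqref{eqn: alg torsor isom}, i.e., $R_i \otimes_F R_i \cong R_i \otimes_k k[G[i]]$: as $R_i$ is finite over $F$ (each $\s^j(y)$ is a root of $X^d - \s^j(y^d) \in F[X]$), comparison of $F$-dimensions gives $\dim_F R_i = \dim_k k[G[i]]$. Being an integral extension of the field $F$ finite over $F$ and sitting inside $L$, the ring $R_i$ is itself a field, with
\[
[R_i : F] = \prod_{j=0}^{i} \bigl[F(y, \dots, \s^j(y)) : F(y, \dots, \s^{j-1}(y))\bigr].
\]
Each factor is at most $d$, with equality at step $j$ precisely when $\s^j(y)$ has degree $d$ over $F(y, \dots, \s^{j-1}(y))$. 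Thus $\dim_F R_i = d^{i+1}$ for every $i$ iff this maximality holds for every $j \geq 0$, yielding the desired equivalence.

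The only subtle point is that Corollary \ref{cor: Zariski closure} is stated for connected $\H$ and so does not apply verbatim to $\H = \mu_d$; the $k$-dimensions of the coordinate rings of finite \'etale group schemes take the place of Krull dimensions in the connected case, and this substitution is routine in characteristic zero.
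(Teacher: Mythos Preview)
Your proposal is correct and follows essentially the same route as the paper: use Lemma~\ref{lemma: crit} to identify $R$ as a $\s$-Picard-Vessiot ring, verify $G\leq [\s]_k\mu_d$ via $g(y^d)=y^d$, invoke Proposition~\ref{prop: relation to clasical PV} for the classical Picard-Vessiot rings $R_i$, and compare $\dim_F R_i$ with $\dim_k k[G[i]]$ via the classical torsor isomorphism. Your explicit remark that Corollary~\ref{cor: Zariski closure} does not apply verbatim (since $\H$ is not connected) and that one must replace Krull dimension by $k$-dimension of coordinate rings of finite \'etale group schemes is a welcome clarification; the paper handles this implicitly by working directly with the torsor theorem and vector-space dimensions.
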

\begin{proof}
	Let $b=y^d\in F$. Then $\de(y)=\frac{\de(b)}{bd}y$ and it follows from Lemma \ref{lemma: crit} that $R=F\{y,y^{-1}\}=F(y,\s(y),\ldots)$ is a $\s$-Picard-Vessiot ring. For every \ks-algebra $S$ and $g\in\Gal_y(R/F)(S)\leq \GL_1(S)$ we have $b=g(b)=g(y^d)=y^dg^d=bg^d$ and it follows that $g^d=1$. Thus $\Gal_y(R/F)$ is a $\s$-closed subgroup of $\mu_d$, the algebraic group of $d$-th roots of unity over $k$. Since $k$ is algebraically closed, $\mu_d$ is isomorphic to $\H$. So $G$ is isomorphic to a $\s$-closed subgroup of $\H$.
	
	Assume that $\Gal_y(R/F)=[\s]_k\mu_d$. As $(\s^i(y))^d=\s^i(b)$, $\s^i(y)$ has degree at most $d$ over $F(y,\s(y),\dots,\s^{i-1}(y))$. For $i\geq 0$, $R_i=F[y,\ldots,\s^i(y)]=F(y,\ldots,\s^i(y))$ is a (classical) Picard-Vessiot ring with (classical) Galois group isomorphic to $\mu_d^{i+1}$ by Proposition~\ref{prop: relation to clasical PV}. By the (classical) torsor theorem, $R_i\otimes_F R_i\cong R_i\otimes_k k[\mu_d^{i+1}]$.  Hence the dimension of $R_i$ as an $F$-vector space agrees with the dimension of $k[\mu_d^{i+1}]$ as a $k$-vector space, and the latter is $d^{i+1}$. Thus $[R_1:R_0][R_2:R_1]\cdots [R_i:R_{i-1}]=d^{i+1}$ and clearly, the field extension $R_j/R_{j-1}$ has degree at most $d$ for every $j\geq 0$, since $(\s^j(y))^d=\s^j(b)$. Therefore $[R_i:R_{i-1}]=d$ and $\s^i(y)$ has degree $d$ over $F(y,\s(y),\dots,\s^{i-1}(y))$.
	
	Conversely, if $F[y,\ldots,\s^i(y)]$ has $F$-dimension $d^{i+1}$ for every $i\geq 1$, then the $i$-th order Zariski closure of $\Gal_y(R/F)$ in $\GL_1$ must equal $\mu_d\times\ldots\times{\hsi\mu_d}$ and so $\Gal_y(R/F)$ must equal $[\s]_k\mu_d$.
\end{proof}

\section{Patching and $\s$-Picard-Vessiot theory}
In this section, we prove a patching result for $\s$-Picard-Vessiot rings that allows us to glue together two $\s$-Picard-Vessiot rings under certain assumptions. The proof relies on factorization of the fundamental solution matrices, so it is crucial that the $\s$-Galois groups of the given $\s$-Picard-Vessiot rings are embedded in $\GL_n$ for the same $n$. Therefore, we start the section by showing that the represenation of a $\s$-Galois group can be changed without changing the $\s$-Picard-Vessiot ring.   

\subsection{Changing the representation of a $\s$-Galois group}
Let $F$ be a $\ds$-field of characteristic zero and $R$ a $\s$-Picard-Vessiot ring over $F$ with $\s$-Galois group $G$. The choice of matrices $A\in F^{n\times n}$ and $Y\in \GL_n(R)$ such that $R$ is a $\s$-Picard-Vessiot ring for $\de(y)=Ay$ with fundamental solution matrix $Y$, determines a $\s$-closed embedding of $G$ into $\GL_n$ via $G\to \Gal_{Y}(R/F)$. The following proposition shows that, conversely, if we start with a $\s$-closed embedding of $G$ into $\GL_n$, then we can find appropriate matrices $A$ and $Y$.

\begin{prop}\label{prop: Tannaka}
	Let $R/F$ be a $\s$-Picard Vessiot ring with $\s$-Galois group $G$ and let $G'$ be a $\s$-closed subgroup of $\GL_n$ isomorphic to $G$. Then there exist $A\in F^{n\times n}$ and $Y\in \GL_n(R)$ with $\de(Y)=AY$ such that 
	$R$ is a $\s$-Picard-Vessiot ring for $\de(y)=Ay$ and $\Gal_{Y}(R/F)=G'$.
\end{prop}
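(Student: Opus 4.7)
The plan is to use a Tannakian construction based on the torsor isomorphism \eqref{eqn: alg torsor isom}, which I will write as $\lambda \colon R \otimes_F R \xrightarrow{\sim} R \otimes_k k\{G\}$ satisfying $\lambda(r \otimes 1) = r \otimes 1$ and $\lambda(1 \otimes r) = \rho(r)$, where $\rho \colon R \to R \otimes_k k\{G\}$ is the $k\{G\}$-comodule structure on $R$ dual to the $G$-action. The embedding $G \simeq G' \hookrightarrow \GL_n$ dualizes to a surjection $\varphi^*\colon k\{\GL_n\} \twoheadrightarrow k\{G\}$; writing $Z \in \GL_n(k\{\GL_n\})$ for the generic matrix, its image $\bar Z := \varphi^*(Z) \in \GL_n(k\{G\})$ has entries which, together with $1/\det \bar Z$, $\s$-generate $k\{G\}$ over $k$.

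The main step will be to produce a matrix $Y \in \GL_n(R)$ satisfying
\[
\rho(Y) \;=\; (Y \otimes 1)(1 \otimes \bar Z) \qquad \text{in } \GL_n(R \otimes_k k\{G\}).
\]
I would achieve this by equipping $V := k^n$ with the $k\{G\}$-comodule structure $v_j \mapsto \sum_i v_i \otimes \bar z_{ij}$ and considering $V \otimes_k R$ with the diagonal coaction. The torsor isomorphism implies that the functor $M \mapsto M \otimes_F R$ from $F$-vector spaces (with trivial coaction) to $k\{G\}$-comodule $R$-modules is an equivalence, with quasi-inverse given by taking $k\{G\}$-coinvariants. Applied to $V \otimes_k R$, this yields an $n$-dimensional $F$-vector space $M$ and an isomorphism $M \otimes_F R \xrightarrow{\sim} V \otimes_k R$ of $k\{G\}$-comodule $R$-modules; expressing an $F$-basis of $M$ in the standard basis of $V \otimes_k R$ gives $Y \in \GL_n(R)$, and compatibility with the coactions becomes exactly the displayed identity.

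Three properties are then extracted from the identity. Applying $\de$ and using that $\bar Z$ has entries in the $\de$-trivial ring $k\{G\}$ gives $\rho(\de(Y)Y^{-1}) = \de(Y)Y^{-1} \otimes 1$; since the $\rho$-coinvariants of $R$ coincide with $F$ (via $R \subseteq \Frac(R)$ and $\Frac(R)^G = F$ from Theorem \ref{theo: Galois correspondence}), the matrix $A := \de(Y)Y^{-1}$ lies in $F^{n \times n}$ and $\de(Y) = AY$. For any $k$-$\s$-algebra $S$ and $g \in G(S)$, specialising the identity at $g$ converts $\rho$ into the $G$-action and yields $g(Y) = Y\varphi_S(g)$, so $\Gal_Y(R/F) = \varphi(G) = G'$. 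For the generation statement $R = F\{Y, 1/\det Y\}$, set $R' := F\{Y, 1/\det Y\}$ and rewrite the identity inside $R \otimes_F R$ as $1 \otimes Y = (Y \otimes 1) \cdot \widetilde{\bar Z}$, where $\widetilde{\bar Z} \in \GL_n((R \otimes_F R)^\de)$ corresponds to $\bar Z$ under $k\{G\} \cong (R \otimes_F R)^\de$. The torsor isomorphism makes $R \otimes_F R$ an $R$-algebra (via $R \otimes 1$) generated by $(R \otimes_F R)^\de$, which in turn is $\s$-generated by the entries of $\widetilde{\bar Z}$ and $1/\det \widetilde{\bar Z}$. Substituting $\widetilde{\bar Z} = (Y \otimes 1)^{-1}(1 \otimes Y)$ and using that $(Y \otimes 1)^{-1}$ lies in $R \otimes 1$ shows these generators all lie in $R \otimes_F R'$; hence $R \otimes_F R = R \otimes_F R'$, and faithful flatness of $R$ over $F$ forces $R = R'$.

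The main obstacle is the Tannakian construction of $Y$: the equivalence between $F$-vector spaces and $k\{G\}$-comodule $R$-modules (via $(- \otimes_k R)^G$) is where the torsor structure does the real work, whereas the remaining verifications then follow mechanically from the defining identity $\rho(Y) = (Y \otimes 1)(1 \otimes \bar Z)$.
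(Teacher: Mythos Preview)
Your proof is correct and follows the same blueprint as the paper's: view $\Spec R$ as a $G_F$-torsor, linearize via the embedding $G\simeq G'\hookrightarrow\GL_n$ to obtain $Y\in\GL_n(R)$ on which $G$ acts by right multiplication, and then read off $A=\de(Y)Y^{-1}\in F^{n\times n}$ from invariance and the Galois correspondence (Theorem~\ref{theo: Galois correspondence}).

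The difference is in how the linearization is carried out. The paper quotes an external result (\cite[Lemma~4.4]{BachmayrWibmer:TorsorsForDifferenceAlgebraicGroups}) stating that any torsor for a $\s$-closed subgroup of $\GL_n$ admits a $\s$-closed embedding into $\GL_n$; the dual surjection $F\{Z,1/\det Z\}\twoheadrightarrow R$ then produces $Y$ and the identity $R=F\{Y,1/\det Y\}$ in one stroke. You instead construct $Y$ by hand via the (non-$\s$) Hopf--Galois equivalence between $F$-modules and $(R,k\{G\})$-Hopf modules, applied to $V\otimes_k R$, and then run a separate faithful-flatness argument ($R\otimes_F R=R\otimes_F R'$, hence $R=R'$) to recover the $\s$-generation. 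Your route is self-contained and makes no appeal to the external torsor lemma (whose proof is in any case a close cousin of your comodule argument), at the cost of handling the $\s$-structure in a second pass; the paper's route is shorter but outsources the key construction.
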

\begin{proof}
	Let $X$ denote the $\s$-variety over the $\s$-field $F$ represented by the $F$-$\s$-algebra $R$. Then $X$ is naturally equipped with a right action $\f\colon X\times G_F\to X$ of the $\s$-algebraic group $G_F$ over $F$:
	For $x\in X(S)$, i.e., $x\colon R\to S$ is a morphism of $F$-$\s$-algebras, and $g\in G_F(S)=G(S)$ the element $\f(x,g)\in X(S)$ is the composition $$R\to R\otimes_k S\xrightarrow{g}R\otimes_k S\to S,$$ where the first map is the inclusion into the first factor and the last map sends $r\otimes s$ to $x(r)s$. The morphism $X\times G_F\to X\times X$, given by $(x,g)\mapsto(x,\f(x,g))$ for $x\in X(S),\ g\in G_F(S)$ for any $F$-$\s$-algebra $S$, is an isomorphism, since the dual map $R\otimes_F R\simeq R\otimes_F F\{G_F\}=R\otimes_k k\{G\}$ is the isomorphism (\ref{eqn: alg torsor isom}). Thus $X$ is a right $G_F$-torsor. 
	In \cite[Lemma 4.4]{BachmayrWibmer:TorsorsForDifferenceAlgebraicGroups} it is shown that every left torsor for a $\s$-closed subgroup of $\GL_n$ is isomorphic to a $\s$-closed $\s$-subvariety of $\GL_n$, with action given by matrix multiplication. If $\widetilde{X}$ is a right $\widetilde{G}$-torsor with action $\widetilde{\f}\colon \widetilde{X}\times \widetilde{G}\to \widetilde{X}$, then $\widetilde{X}$ is a left torsor with action $\widetilde{G}\times \widetilde{X}\to \widetilde{X},\ (g,x)\mapsto\f(x,g^{-1})$. Therefore, it follows that there exists a $\s$-closed embedding $\varphi\colon X\to \GL_n$ such that the right action of $G_F$ on $X$ is given by matrix multiplication, where $G$ is identified with the $\s$-closed subgroup $G'$ of $\GL_n$.
	The $\s$-closed embedding $\varphi\colon X\to \GL_n$ corresponds to a surjective morphism $\varphi^*\colon F\{Z,1/\det{Z}\}\to R$ of $F$-$\s$-algebras, where $Z$ is an $n\times n$ matrix of $\s$-indeterminates over $F$. Let $Y\in \GL_n(R)$ denote the image of $Z$.
	
	For a \ks-algebra $S$, the action of an element $g\in G(S)$ on $Y$ is given by $g(Y)=Yg$, where $g$ is considered as an element of $\GL_n(S)$ via the isomorphism $G\to G'$. Therefore, $$g(\de(Y)Y^{-1})=\de(g(Y))g(Y)^{-1}=\de(Yg)(Yg)^{-1}=\de(Y)gg^{-1}Y=\de(Y)Y.$$ By 
	Theorem \ref{theo: Galois correspondence} we have $A=\de(Y)Y^{-1}\in F^{n\times n}$ and we see that $R$ is a $\s$-Picard-Vessiot ring for $\de(y)=Ay$ with fundamental solution matrix $Y$ and $\Gal_Y(R/F)=G'$.	
\end{proof}

\subsection{A $\ds$-setup for applying patching methods}
The method of patching over fields was introduced by Harbater and Hartmann in \cite{HH}. In the most basic setup, they consider fields $F\subseteq F_1,F_2$ with a common overfield $F_0$ and show that finite dimensional vector spaces $V_1$ over $F_1$ and $V_2$ over $F_2$ together with an isomorphism $\psi \colon V_1\otimes_{F_1}F_0\to V_2\otimes_{F_2}F_0$ can be glued together to an $F$-vector space $V$ with isomorphisms $\phi_1\colon V\otimes_F F_1 \simeq V_1$ and $\phi_2\colon V\otimes_F F_2 \simeq V_2$ such that $(\phi_2\otimes_{F_2}F_0)\circ (\phi_1\otimes_{F_1}F_0)^{-1}=\psi$  if $(F,F_1,F_2,F_0)$ satisfies two properties called \textit{intersection} and \textit{factorization}. We call such quadruples \textit{diamonds with the factorization property}. In \cite{BachmayrHarbaterHartmannWibmer:DifferentialEmbeddingProblemsOverComplexFunctionFields}, we considered \textit{differential} diamonds with the factorization property.
   
\begin{Def}
A \emph{diamond with the factorization property} is a quadruple $(F,F_1,F_2,F_0)$ of fields with inclusions $F\subseteq F_1,F_2$ and $F_1,F_2\subseteq F_0$ such that $F_1\cap F_2=F$ (intersection) and such that for every $n\in \N$ and every matrix $A\in \GL_n(F_0)$, there exist matrices $B \in \GL_n(F_1)$ and $C\in \GL_n(F_2)$ with $A=B\cdot C$ (factorization). If in addition $\operatorname{char}(F)=0$ and all of the fields are equipped with a derivation that is compatible with the inclusions, then $(F,F_1,F_2,F_0)$ is called a \emph{differential diamond with the factorization property}. Similarly, if all four fields are $\ds$-fields of characteristic zero such that both $\de$ and $\s$ are compatible with the inclusions, then we call $(F,F_1,F_2,F_0)$ a \emph{$\ds$-diamond with the factorization property}.
\end{Def}

Over $F=\C(x)$, diamonds with the factorization property arise as fields of meromorphic functions on suitabel open subsets of $\X=\mathbb P^1 _\C$, as the following lemma states.

\begin{lem}[\cite{BachmayrHarbaterHartmannWibmer:DifferentialEmbeddingProblemsOverComplexFunctionFields}, Lemma~3.4] \label{lemma: meromorphic diamond}
Let $F$ be a one-variable function field over $\C$, or equivalently the field of meromorphic functions on a compact Riemann surface $\X$.  
Let $O_1,O_2$ be connected metric open subsets of $\X$ such that $O_i \ne \X$, $O_1 \cup O_2 = \X$, and $O_0 := O_1 \cap O_2$ is connected.  Let $F_i$ be the field of meromorphic functions on 
$O_i$.  Then $(F,F_1,F_2,F_0)$ is a diamond with the factorization property. 
\end{lem}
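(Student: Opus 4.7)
The plan is to verify the two defining properties of a diamond with the factorization property: intersection and factorization. For the intersection property, let $f \in F_1 \cap F_2$. Then $f$ is meromorphic on $O_1$ and on $O_2$, and these restrictions agree on the overlap $O_0$ (as $f$ is the same element of $F_0$). Hence they glue to a meromorphic function on $O_1 \cup O_2 = \X$. Since $\X$ is a compact Riemann surface, its field of meromorphic functions is $F$, so $f \in F$.

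The factorization property is the substantial part. Given $A \in \GL_n(F_0)$, the plan is to produce $B \in \GL_n(F_1)$ and $C \in \GL_n(F_2)$ with $A = B \cdot C$ in two steps. First, I would invoke the classical Cartan factorization lemma: since $O_1$ and $O_2$ are connected non-compact open subsets of $\X$, they are Stein by the Behnke--Stein theorem, and Cartan's lemma for Stein covers yields, for any holomorphic $A' \in \GL_n(\mathcal{O}(O_0))$, a factorization $A' = B' \cdot C'$ with $B' \in \GL_n(\mathcal{O}(O_1))$ and $C' \in \GL_n(\mathcal{O}(O_2))$. Second, I would reduce the general meromorphic case to this holomorphic case: since $A$ has only finitely many poles on $O_0$ and $\det(A)$ has only finitely many zeros on $O_0$, I can use Riemann--Roch on $\X$ to construct a matrix $M \in \GL_n(F)$ such that $M \cdot A$ is holomorphic and pointwise invertible on $O_0$. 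Applying the holomorphic factorization to $MA = B' \cdot C'$ and then writing $A = (M^{-1}B') \cdot C'$ gives the desired factorization, since $M^{-1}B' \in \GL_n(F_1)$ (as $F \subseteq F_1$ and $\mathcal{O}(O_1) \subseteq F_1$) and $C' \in \GL_n(F_2)$.

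The main obstacle is constructing the correction matrix $M \in \GL_n(F)$ in the reduction step. A single scalar multiplier clears poles of $A$ but may create new zeros of the determinant, so one has to balance pole-clearing against keeping $MA$ pointwise invertible. The clean way to handle this is to use a finite sequence of elementary transformations over $F$ together with scalar multiplications by meromorphic functions on $\X$ chosen via Riemann--Roch to have prescribed divisor behavior at the finite set of problematic points of $A$ and $A^{-1}$ in $O_0$. Once this correction matrix is in hand, everything else is classical complex-analytic geometry on Stein manifolds.
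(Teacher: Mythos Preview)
The paper does not prove this lemma; it is cited from \cite{BachmayrHarbaterHartmannWibmer:DifferentialEmbeddingProblemsOverComplexFunctionFields}, so there is no in-paper argument to compare your proposal against. Your intersection argument is fine, but the factorization argument has two genuine gaps.

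First, the holomorphic factorization you invoke fails here. Writing a holomorphic $A'\in\GL_n(\mathcal{O}(O_0))$ as $B'C'$ with $B'\in\GL_n(\mathcal{O}(O_1))$ and $C'\in\GL_n(\mathcal{O}(O_2))$ is exactly the statement that the holomorphic vector bundle on $\X=O_1\cup O_2$ defined by the \v{C}ech cocycle $A'$ is holomorphically trivial. Cartan's lemma, or more precisely Grauert's Oka principle, yields this when the union is Stein; but here $\X$ is compact. Concretely, on $\mathbb{P}^1_\C$ with $O_1=\{|z|<2\}$ and $O_2=\{|z|>1\}$, the cocycle $A'(z)=z\in\GL_1(\mathcal{O}(O_0))$ admits no such holomorphic splitting: it represents the non-trivial line bundle $\mathcal{O}(1)$, and a winding-number count on $|z|=3/2$ confirms this directly. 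What \emph{is} true, and what suffices for the lemma, is that every holomorphic vector bundle on a compact Riemann surface is \emph{meromorphically} trivial (Riemann--Roch produces $n$ generically independent meromorphic sections), which gives the factorization with $B',C'$ meromorphic rather than holomorphic.

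Second, your reduction step asserts that $A$ has only finitely many poles and $\det A$ only finitely many zeros on $O_0$. But $O_0$ is a non-compact Riemann surface, and elements of $F_0=\mathcal{M}(O_0)$ may have infinitely many poles or zeros accumulating on $\partial O_0$; a single correction matrix $M\in\GL_n(F)$ built by Riemann--Roch at finitely many points cannot clear them. The repair uses Stein-ness of the $O_i$ rather than of $\X$: split the (discrete in $O_0$) singular locus of $A$ into two pieces, each closed and discrete in one of the $O_i$ (possible because the compact sets $\X\setminus O_1\subset O_2$ and $\X\setminus O_2\subset O_1$ are disjoint), and use Weierstrass-type theorems on the open Riemann surfaces $O_i$ to construct $M_i\in\GL_n(F_i)$ with $M_1AM_2$ holomorphic and invertible on $O_0$; only then does one pass to the vector-bundle argument above.
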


\begin{ex}
 Let $\X=\mathbb P ^1_\C$ be the Riemann sphere and $F=\C(x)$. Consider the open subsets $O_1=\{ x \in \mathbb P ^1 _\C \mid |x|<1\}$ and $O_2=\{ x \in \mathbb P ^1 _\C \mid |x|>\frac{1}{2}\}$. Then $O_1 \cup O_2 = \X$ and the annulus $O_0= O_1 \cap O_2=\{ x \in \mathbb P ^1 _\C \mid \frac{1}{2}<|x|<1\}$ is connected. Let $F_i$ be the field of meromorphic functions on $O_i$.  Then $(F,F_1,F_2,F_0)$ is a diamond with the factorization property by Lemma \ref{lemma: meromorphic diamond}.
\end{ex}

\begin{cor}\label{cor: meromorphic diamond}
Let $U_1\supseteq U_2\supseteq U_3\supseteq \cdots$ and $V_1\supseteq V_2\supseteq V_3\supseteq \cdots$ be chains of proper connected metric open subsets of the Riemann sphere $\X=\mathbb P^1_\C$ such that $U_r\cap V_r$ is connected and $U_r\cup V_r=\X$ for all $r\in \N$. Let $F_{U_r}$, $F_{V_r}$ and $F_{U_r\cap V_r}$ denote the fields of meromorphic functions on $U_r$, $V_r$ and $U_r\cap V_r$, respectively, and let $F=\C(x)$ (the field of meromorphic functions on $\X$). Define $F_1=\varinjlim F_{U_r}$ (with respect to the natural inclusions $F_{U_r}\to F_{U_s}$ for $r\leq s$) and similarly $F_2=\varinjlim F_{V_r}$ and $F_0=\varinjlim F_{U_r\cap V_r}$. Then there are natural inclusions $F_1,F_2\subseteq F_0$ and $(F,F_1,F_2,F_0)$ is a diamond with the factorization property.
\end{cor}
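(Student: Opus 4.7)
The plan is to reduce both the intersection and the factorization property at the limit level $(F,F_1,F_2,F_0)$ to the same properties at each finite stage $(F,F_{U_r},F_{V_r},F_{U_r\cap V_r})$, where Lemma~\ref{lemma: meromorphic diamond} directly applies: for every $r\in\N$ the subsets $U_r,V_r\subsetneq\X$ are connected open with $U_r\cup V_r=\X$, and $U_r\cap V_r$ is connected and nonempty (the latter because $\X=\mathbb{P}^1_\C$ is connected and $U_r,V_r$ are proper open), so the lemma produces a diamond with the factorization property at index $r$.

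Before passing to the limit I would check that all transition maps in the three direct systems are injective: for nested connected open subsets, restriction of meromorphic functions is injective by the identity theorem. Consequently $F_1,F_2,F_0$ are fields (direct limits of fields along injective homomorphisms), the restrictions $F_{U_r}\hookrightarrow F_{U_r\cap V_r}$ and $F_{V_r}\hookrightarrow F_{U_r\cap V_r}$ induce compatible injections $F_1\hookrightarrow F_0$ and $F_2\hookrightarrow F_0$, and $F=\C(x)$ (the meromorphic functions on the compact Riemann surface $\X$) embeds into each $F_{U_r}$ and $F_{V_r}$ by restriction, hence into $F_1$ and $F_2$.

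For the intersection property, given $a\in F_1\cap F_2$ viewed in $F_0$, I would pick representatives $a_1\in F_{U_r}$ and $a_2\in F_{V_s}$, and then use injectivity of the transition maps together with the chain structure to find a single index $t\geq r,s$ at which $a_1$ and $a_2$ both restrict to the same element of $F_{U_t\cap V_t}$. Lemma~\ref{lemma: meromorphic diamond} at index $t$ then forces $a_1=a_2\in F$. For the factorization property, an element $A\in\GL_n(F_0)$ has only finitely many entries and an invertible determinant, so $A$ already lies in $\GL_n(F_{U_r\cap V_r})$ for some $r$; the lemma at index $r$ produces a factorization $A=BC$ with $B\in\GL_n(F_{U_r})\subseteq\GL_n(F_1)$ and $C\in\GL_n(F_{V_r})\subseteq\GL_n(F_2)$.

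I do not anticipate a serious obstacle; the main care is the bookkeeping to upgrade equality in the direct limit $F_0$ to genuine equality at a single finite stage, which works precisely because all transition maps are injective field homomorphisms and the index set is totally ordered. The rest is a routine direct-limit passage on top of Lemma~\ref{lemma: meromorphic diamond}.
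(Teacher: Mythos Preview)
Your proposal is correct and follows essentially the same approach as the paper: apply Lemma~\ref{lemma: meromorphic diamond} at each finite stage and then pass to the direct limit, using that any element (or matrix) in the limit already lives at some finite index. Your write-up is merely more explicit about the injectivity of the transition maps and the bookkeeping of choosing a common index $t\geq r,s$; the paper's proof compresses this into the single observation that $f\in F_1\cap F_2$ implies $f\in F_{U_r}\cap F_{V_r}$ for some $r$.
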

\begin{proof}
By Lemma \ref{lemma: meromorphic diamond}, $(F,F_{U_r},F_{V_r}, F_{U_r\cap V_r})$ is a diamond with the factorization property for every $r\in \N$. Let $f\in F_0$ be contained in the intersection $F_1\cap F_2$. Then there exists an $r\in \N$ with $f \in F_{U_r}\cap F_{V_r}=F$. Thus $F_1\cap F_2=F$. Similarly, for $n \in \N$ and $A\in \GL_n(F_0)$ there exists an $r\in \N$ with $A\in \GL_n(F_{U_r\cap V_r})$. Hence there exist matrices $B\in \GL_n(F_{U_r})\subseteq \GL_n(F_1)$ and $C\in \GL_n(F_{V_r})\subseteq \GL_n(F_2)$ with $A=B\cdot C$.  
\end{proof}

\begin{ex}
 Let $\X=\mathbb P ^1_\C$ be the Riemann sphere and $F=\C(x)$. Consider the open subsets $U_r=\{ x \in \mathbb P ^1 _\C \mid |x|<1\}$ and $V_r=\{ x \in \mathbb P ^1 _\C \mid |x|>1-\frac{1}{r}\}$. Then for all $r\in \N$, $U_r \cup V_r = \X$ and the annulus $U_r \cap V_r=\{ x \in \mathbb P ^1 _\C \mid 1-\frac{1}{r}<|x|<1\}$ is connected. Define $F_1=\varinjlim F_{U_r}$, $F_2=\varinjlim F_{V_r}$ and $F_0=\varinjlim F_{U_r\cap V_r}$. Then $F_1=F_{U_1}$ is the field of meromorphic functions on the open set $\{ x \in \mathbb P ^1 _\C \mid |x|<1\}$, $F_2$ is the field of meromorphic functions on the open set $\{ x \in \mathbb P ^1 _\C \mid |x|>1\}$ that are meromorphic in $x=1$ and $F_0$ is the field of functions that are meromorphic in $x=1$.
\end{ex}

We proceed with a lemma that equips fields ocurring as direct limits as in Corollary \ref{cor: meromorphic diamond} with a structure of $\ds$-fields. 
\begin{lem}\label{lemma: direct limit}
Let $U_1\supseteq U_2\supseteq U_3\supseteq \cdots$ be a chain of connected metric open subsets of the Riemann sphere $\X=\mathbb P^1_\C$ and define $F_1=\varinjlim F_{U_r}$. Then the natural derivations $\de$ on $F_{U_r}$ induce a derivation $\de$ on $F_1$.  If in addition there are homomorphisms  $\sigma_r\colon F_{U_r}\to F_{U_{r+1}}$ for all $r\in \N$ such that
$$ \xymatrix{
F_{U_r} \ar^{\s_r}[r] \ar_\de[d] & F_{U_{r+1}} \ar^\de[d] \\
F_{U_r} \ar^{\s_r}[r] & F_{U_{r+1}}
} \quad \quad \text{ and } \quad \quad
\xymatrix{
F_{U_{r+1}} \ar^{\s_{r+1}}[r] & F_{U_{r+2}}  \\
F_{U_r} \ar^{\s_r}[r] \ar[u] & F_{U_{r+1}} \ar[u]
}
$$
commute, then $F_1$ is a $\ds$-field with $F_1^\de=\C$.
\end{lem}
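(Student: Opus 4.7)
The plan is to invoke the universal property of the direct limit at each stage. First, observe that each natural inclusion $F_{U_r}\hookrightarrow F_{U_{r+1}}$ is just restriction of meromorphic functions and so commutes with $\de=d/dx$; hence $(F_{U_r},\de)_{r\in\N}$ is a directed system in the category of rings equipped with a derivation, and $F_1=\varinjlim F_{U_r}$ inherits a unique derivation $\de\colon F_1\to F_1$ characterized by $\de(f)=\de(\tilde f)$ for any lift $\tilde f\in F_{U_r}$ of $f\in F_1$. That $F_1$ is a field is standard: each transition map is injective since restriction of a nonzero meromorphic function to a nonempty open subset remains nonzero.

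Next, to construct $\s$, I would read the second commutative square as saying that $\{\s_r\}_{r}$ is a morphism from the directed system $\{F_{U_r},\iota_{r,r+1}\}$ to the shifted directed system $\{F_{U_{r+1}},\iota_{r+1,r+2}\}$. Since both systems have the same direct limit $F_1$, this morphism of directed systems induces a ring endomorphism $\s\colon F_1\to F_1$: if $f\in F_1$ is the image of $\tilde f\in F_{U_r}$, then $\s(f)$ is the image of $\s_r(\tilde f)\in F_{U_{r+1}}$ in $F_1$, and the compatibility square ensures independence of the chosen lift.

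The first commutative square states precisely $\de\circ\s_r=\s_r\circ\de$ at each level, which passes to the limit to yield $\de\circ\s=\s\circ\de$ on $F_1$; this is the condition $\de(\s(r))=\hslash\s(\de(r))$ with $\hslash=1\in F_1^\de$, so $F_1$ is a $\ds$-field.

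Finally, for $F_1^\de=\C$, given $f\in F_1$ with $\de(f)=0$, pick a lift $\tilde f\in F_{U_r}$. Then $\de(\tilde f)\in F_{U_r}$ maps to $0\in F_1$, and injectivity of the transition maps forces $\de(\tilde f)=0$ in $F_{U_r}$. Since $U_r$ is a connected open subset of the Riemann sphere and $\tilde f$ is meromorphic on $U_r$ with zero derivative, $\tilde f$ is a complex constant, hence so is $f$. There is no genuine obstacle in the argument; the only point that requires care is verifying that $\s$ is well defined on the limit, which is exactly what the compatibility square encodes.
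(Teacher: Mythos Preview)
Your proof is correct and follows exactly the same approach as the paper's, which simply asserts that the derivation and endomorphism on the direct limit are well defined and commute, and that $F_{U_r}^\de=\C$ for each $r$ implies $F_1^\de=\C$. You have merely unpacked these routine verifications in more detail.
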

\begin{proof}
It is immediate that the derivations on $F_{U_r}$ induce a well-defined derivation on the direct limit and also that the morphisms $\{\s_r\}_{r\geq 1}$ induce a well-defined endomorphism $\s\colon F_1 \to F_1$ that commutes with $\de$. Finally, $F_{U_r}^\de=\C$ for all $r$ and thus $F_1^\de=\C$.
\end{proof}

In the proof of our main theorem, we will work with the following kind of fields.
\begin{ex}\label{ex direct limit}
Fix a natural number $m\in \N$. 
\begin{enumerate}
\item For $r\in \N$, let $V_{r,m}=\{x \in \mathbb P^1_\C \mid x \notin [-r,0]+mi\}$ be the connected metric open subset of the Riemann sphere $\mathbb P^1_\C$ obtained by deleting the translate $[-r,0]+mi$ of the real interval $[-r,0]$. Note that $V_{1,m}\supseteq V_{2,m}\supseteq V_{3,m}\supseteq\dots$. We let $F_{V_{r,m}}$ denote the field of meromorphic functions on $V_{r,m}$. Then $x\mapsto x+1$ defines a holomorphic function $V_{r+1,m}\to V_{r,m}$ that induces a homomorphism $\sigma_r\colon F_{V_{r,m}}\to F_{V_{r+1,m}}$ for all $r\in \N$. Moreover, $\sigma_r$ is compatible with the natural derivations $\delta$ on $F_{V_{r,m}}$ and $F_{V_{r+1,m}}$ and $\sigma_{r+1}\colon F_{V_{r+1,m}}\to F_{V_{r+2,m}}$ restricts to $\sigma_r\colon F_{V_{r,m}}\to F_{V_{r+1,m}}$. Hence the direct limit
$$L(m)=\varinjlim\limits_{r\in \N} F_{V_{r,m}}$$ is a $\ds$-field with $F(m)^\delta=\C$ by Lemma \ref{lemma: direct limit}.
 \item For $r\in \N$, consider the subset  
$$U_{r,m}=\{x \in \mathbb P^1_\C \mid x \notin \bigcup\limits_{l=1}^m\left( [-r,0]+li\right)\}$$ of the Riemann sphere $\X=\mathbb P^1_\C$, where we delete the translates $[-r,0]+i$, $[-r,0]+2i$,\dots, $[-r,0]+mi$ of the real interval $[-r,0]$. Note that $U_{r,m}$ is a connected metric open subset of $\X$ and $U_{1,m}\supseteq U_{2,m}\supseteq U_{3,m}\supseteq \dots$. Let $F_{U_{r,m}}$ denote the field of meromorophic functions on $U_{r,m}$. Then $x\mapsto x+1$ defines a holomorphic function $U_{r+1,m}\to U_{r,m}$ that induces a homomorphism $\sigma_r\colon F_{U_{r,m}}\to F_{U_{r+1,m}}$ for all $r\in \N$. Moreover, $\sigma_r$ is compatible with the natural derivations $\delta$ on $F_{U_{r,m}}$ and $F_{U_{r+1,m}}$ and $\sigma_{r+1}\colon F_{U_{r+1,m}}\to F_{U_{r+2,m}}$ restricts to $\sigma_r\colon F_{U_{r,m}}\to F_{U_{r+1,m}}$. Hence the direct limit $$F(m)=\varinjlim\limits_{r\in \N} F_{U_{r,m}}$$  is a $\ds$-field with $F(m)^\delta=\C$ by Lemma \ref{lemma: direct limit}. 
\end{enumerate}
\end{ex}

\begin{rem}
 Note that the $\s$-fields $L(m)$ and $F(m)$ are not inversive, i.e., $\sigma$ is not surjective. For example, the image of $\s$ contains no functions with a singularity at the point $(-1/2+mi)$. Assume to the contrary that there exists an $f\in L(m)$ or $f\in F(m)$ such that $f(x+1)$ has a singularity at $(-1/2+mi)$. Then $f$ has a singularity at $1/2+mi$, a contradiction.
\end{rem}

Using Lemma \ref{lemma: direct limit}, we can now equip diamonds with the factorization property as in Corollary \ref{cor: meromorphic diamond} with a $\ds$-structure.

\begin{cor}\label{cor: diamond}
Let $U_1\supseteq U_2\supseteq U_3\supseteq \cdots$ and $V_1\supseteq V_2\supseteq V_3\supseteq \cdots$ be chains of proper connected metric open subsets of the Riemann sphere $\X=\mathbb P^1_\C$ such that $U_r\cap V_r$ is connected and $U_r\cup V_r=\X$ for all $r\in \N$. Let $F_{U_r}$, $F_{V_r}$ and $F_{U_r\cap V_r}$ denote the fields of meromorphic functions on $U_r$, $V_r$ and $U_r\cap V_r$, respectively, and let $F=\C(x)$ denote the field of meromorphic functions on $\X$. Define $F_1=\varinjlim F_{U_r}$ (with respect to the natural inclusions $F_{U_r}\to F_{U_s}$ for $r\leq s$) and similarly $F_2=\varinjlim F_{V_r}$ and $F_0=\varinjlim F_{U_r\cap V_r}$. 
For every $r\geq 1$ let $\sigma_r\colon F_{U_r\cap V_r}\to F_{U_{r+1}\cap V_{r+1}}$ be a morphism such that $\s_r(F_{U_r})\subseteq F_{U_{r+1}}$, $\s_r(F_{V_r})\subseteq F_{V_{r+1}}$ and
$$ \xymatrix{
	F_{U_r\cap V_r} \ar^-{\s_r}[r] \ar_\de[d] & F_{U_{r+1}\cap V_{r+1}} \ar^\de[d] \\
	F_{U_r\cap V_r} \ar^-{\s_r}[r] & F_{U_{r+1}\cap V_{r+1}}
} \quad \quad \text{ and } \quad \quad
\xymatrix{
	F_{U_{r+1}\cap V_{r+1}} \ar^{\s_{r+1}}[r] & F_{U_{r+2}\cap V_{r+2}}  \\
	F_{U_r\cap V_r} \ar^{\s_r}[r] \ar[u] & F_{U_{r+1}\cap V_{r+1}} \ar[u]
}
$$
commute. Then $(F,F_1,F_2,F_0)$ is a $\ds$-diamond with the factorization property.
\end{cor}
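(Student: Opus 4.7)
By Corollary~\ref{cor: meromorphic diamond}, the quadruple $(F,F_1,F_2,F_0)$ is already a diamond with the factorization property at the level of fields, so what remains is to equip the four fields with compatible $\ds$-structures. My plan is to apply Lemma~\ref{lemma: direct limit} once to each of $F_0$, $F_1$ and $F_2$, and then to verify that all four inclusions are morphisms of $\ds$-fields.

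First I would apply Lemma~\ref{lemma: direct limit} to the chain $U_1\cap V_1\supseteq U_2\cap V_2\supseteq\cdots$ together with the given morphisms $\sigma_r\colon F_{U_r\cap V_r}\to F_{U_{r+1}\cap V_{r+1}}$; the two commutative diagrams required by the lemma are exactly those stipulated in the hypothesis, so $F_0$ receives a $\ds$-structure with $F_0^\delta=\C$. Next, since $\sigma_r(F_{U_r})\subseteq F_{U_{r+1}}$, each $\sigma_r$ restricts to a morphism $F_{U_r}\to F_{U_{r+1}}$ which inherits commutativity with $\delta$ and compatibility with the chain inclusions from $\sigma_r$; a second application of Lemma~\ref{lemma: direct limit} equips $F_1$ with a $\ds$-structure, and the analogous argument using $\sigma_r(F_{V_r})\subseteq F_{V_{r+1}}$ does the same for $F_2$. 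Compatibility of every inclusion with $\delta$ is automatic since each derivation is the restriction of the standard meromorphic derivation on the Riemann sphere, and the inclusions $F_1,F_2\hookrightarrow F_0$ commute with $\sigma$ by construction. For the inclusion $F\hookrightarrow F_0$, I would invoke the intersection property $F=F_1\cap F_2$ from Corollary~\ref{cor: meromorphic diamond}: then $\sigma(F)\subseteq\sigma(F_1)\cap\sigma(F_2)\subseteq F_1\cap F_2=F$, so $\sigma$ restricts to an endomorphism of $F$, and $F$ is indeed a $\ds$-subfield of $F_0$, compatibly with the inclusions into $F_1$ and $F_2$.

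\textbf{Main obstacle.} The result is essentially a bookkeeping corollary of Corollary~\ref{cor: meromorphic diamond} and Lemma~\ref{lemma: direct limit}, and no step requires a new idea. The only point that is not a direct appeal to a prior result is the stability of $F$ itself under the induced $\sigma$: this is not assumed outright in the hypotheses on the $\sigma_r$ but is forced a posteriori by the intersection property $F=F_1\cap F_2$ combined with the stabilities of $F_1$ and $F_2$ already established.
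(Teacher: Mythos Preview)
Your proposal is correct and follows essentially the same route as the paper: invoke Corollary~\ref{cor: meromorphic diamond} for the diamond with the factorization property, use the $\sigma_r$ (via Lemma~\ref{lemma: direct limit} or directly) to put a $\ds$-structure on $F_0$ that restricts to $F_1$ and $F_2$, and then deduce stability of $F$ under $\sigma$ from the intersection property $F=F_1\cap F_2$. The only cosmetic difference is that the paper builds $\sigma$ once on $F_0$ and then restricts, whereas you phrase it as three separate applications of Lemma~\ref{lemma: direct limit}; the content is identical.
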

\begin{proof}
By Corollary \ref{cor: meromorphic diamond}, $(F,F_1,F_2,F_0)$ is a diamond with the factorization property. For all $r\in \N$, the fields $F_{U_r}$ are differential fields and the derivation is compatible with the inclusions $F_{U_r}\subseteq F_{U_s}$ for $r\leq s$, hence we can equip $F_1$ with a derivation and similarly for $F_2$ and $F_0$. These derivations are compatible with the natural inclusions $F\subseteq F_1,F_2$ and $F_1,F_2 \subseteq F_0$ and thus $(F,F_1,F_2,F_0)$ is a differential diamond with the factorization property. The morphisms $\{\s_r\}_{r\geq 1}$ induce a well-defined endomorphism $\s\colon F_0 \to F_0$ that restricts to endomorphisms on $F_1$ and $F_2$. Moreover, $\de(\sigma(f))=\s(\de(f))$ for all $f\in F_{0}$ and $\s$ restricts to an endomorphism of $F$ because $F_{U_r}\cap F_{V_r}=F$. Hence $(F,F_1,F_2,F_0)$ is a $\ds$-diamond with the factorization property. 
\end{proof}

Using Corollary \ref{cor: diamond}, we can now use the fields from Example \ref{ex direct limit} to obtain $\ds$-diamonds $(F,F(m-1),L(m),F(m))$ for all $m\in \N$:
\begin{ex}\label{ex diamond} Consider $F=\C(x)$, $\X=\mathbb P ^1_\C$ and fix a natural number $m \in \N$. For $r\in \N$, let $U_{r,m}$, $V_{r,m}$ be as defined in Example \ref{ex direct limit}. Hence $U_{r,m-1}\cap V_{r,m}=U_{r,m}$ is connected and $U_{r,m-1}\cup V_{r,m}=\X$ for all $r\in \N$. 

Define 
\begin{eqnarray*}
F_1&=&\varinjlim\limits_{r\in\N} F_{U_{r,m-1}}=F(m-1) \\
F_2&=&\varinjlim\limits_{r\in\N}  F_{V_{r,m}}=L(m) \\ 
F_0&=&\varinjlim\limits_{r\in\N}  F_{U_{r,m-1}\cap V_{r,m}}=F(m).
\end{eqnarray*}
 By Example \ref{ex direct limit} together with Corollary \ref{cor: diamond}, $(F,F_1,F_2,F_0)$ is a $\ds$-diamond with the factorization property with $F_0^\de=\C=F^\de$.
\end{ex}

\subsection{A patching result for $\s$-Picard-Vessiot rings}

The next theorem allows us to glue together two $\s$-Picard-Vessiot rings under certain assumptions. The proof is similar to related statements for Picard-Vessiot rings (\cite[Thm. 2.4]{BachmayrHarbaterHartmann:DifferentialGaloisGroupsOverLaurentSeriesFields}) and parameterized Picard-Vessiot rings (\cite[Thm. 2.2]{param_LAG}).

\begin{thm}\label{thm: patching}
Let $(F,F_1,F_2,F_0)$ be a $\ds$-diamond with the factorization property such that $k:=F^\de=F_0^\de$. Let $G$ be a $\s$-algebraic group over $k$ generated by two $\s$-closed subgroups $H_1$ and $H_2$. For $i=1,2$, let $R_i/F_i$ be a $\s$-Picard-Vessiot ring with $\s$-Galois group isomorphic to $H_i$ such that $R_i\subseteq F_0$ as an $F_i$-$\ds$-subalgebra. Then there exists a $\s$-Picard-Vessiot ring $R/F$ with $\s$-Galois group isomorphic to $G$ and $R\subseteq F_0$ as an $F$-$\ds$-subalgebra.
\end{thm}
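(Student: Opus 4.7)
The plan is to adapt the well-known patching proofs for (parameterized) Picard-Vessiot rings. First, I invoke Lemma \ref{lemma: linearization} to embed $G$ as a $\s$-closed subgroup of $\GL_n$ for some $n$; this yields embeddings of $H_1$ and $H_2$ into $\GL_n$. Using Proposition \ref{prop: Tannaka}, I then choose, for each $i$, a matrix $A_i \in F_i^{n \times n}$ and a fundamental solution matrix $Y_i \in \GL_n(R_i)$ with $R_i = F_i\{Y_i, 1/\det(Y_i)\}$ and $\Gal_{Y_i}(R_i/F_i) = H_i$ as $\s$-closed subgroups of $\GL_n$.

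Next, I apply the factorization property to the matrix $Y_1 Y_2^{-1} \in \GL_n(F_0)$, writing it as $P_1^{-1} P_2$ with $P_1 \in \GL_n(F_1)$ and $P_2 \in \GL_n(F_2)$. Setting $Y := P_1 Y_1 = P_2 Y_2 \in \GL_n(F_0)$, a direct computation using either expression yields $A := \de(Y) Y^{-1} = \de(P_i) P_i^{-1} + P_i A_i P_i^{-1} \in F_i^{n \times n}$, and since $F_1 \cap F_2 = F$, we get $A \in F^{n \times n}$. I then define $R := F\{Y, 1/\det(Y)\} \subseteq F_0$. Since $F \subseteq F_i \subseteq F_0$ together with $F^\de = F_0^\de = k$ forces $F_i^\de = k$, Lemma \ref{lemma: crit} ensures that $R$ is a $\s$-Picard-Vessiot ring for $\de(y) = Ay$ over $F$.

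To identify $G' := \Gal_Y(R/F) \leq \GL_n$ with $G$, I first establish $G \leq G'$. Since $P_i \in \GL_n(F_i)$, the subring $F_i \cdot R \subseteq F_0$ equals $F_i\{Y, 1/\det(Y)\} = F_i\{Y_i, 1/\det(Y_i)\} = R_i$. Applying Lemma \ref{lemma: compositum}(ii) to the chain $F \subseteq F_i \subseteq F_0$ then gives $\Gal_Y(R_i/F_i) \leq G'$, while Lemma \ref{lemma: compositum}(i) (applied with $B = P_i$) identifies $\Gal_Y(R_i/F_i) = \Gal_{Y_i}(R_i/F_i) = H_i$. Thus $G = \langle H_1, H_2 \rangle \leq G'$.

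The main obstacle is the reverse inclusion $G' \leq G$. For this I plan to invoke the $\s$-Galois correspondence (Theorem \ref{theo: Galois correspondence}) for $R/F$: since $G$ is a $\s$-closed subgroup of $G'$, we have $G = G'$ if and only if $E^G = F$, where $E = \Frac(R)$. Because $G$ is the join of $H_1$ and $H_2$ in the subgroup lattice and the correspondence is inclusion-reversing, $E^G = E^{H_1} \cap E^{H_2}$. The key remaining point is to show $E^{H_i} \subseteq F_i$. Given $\alpha \in E^{H_i}$, one has $\alpha \in \Frac(R) \subseteq \Frac(R_i)$ via $R \subseteq R_i$ inside $F_0$. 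A careful comparison shows that for every $\s$-$k$-algebra $S$ and every $h \in H_i(S)$, the action of $h$ viewed as an element of $G'(S)$ on $R \otimes_k S$ is compatible, via the natural map $R \otimes_k S \to R_i \otimes_k S$, with the corresponding action of $H_i(S)$ on $R_i \otimes_k S$ (both send $Y \mapsto Yh$). Passing to total rings of fractions and applying the $\s$-Galois correspondence to $R_i/F_i$ then gives $\alpha \in \Frac(R_i)^{H_i} = F_i$. Hence $E^G \subseteq F_1 \cap F_2 = F$, which finishes the argument.
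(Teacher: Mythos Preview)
Your proof is correct and follows essentially the same route as the paper's: embed $G$ in $\GL_n$, realize $H_i$ via Proposition \ref{prop: Tannaka}, factorize $Y_1Y_2^{-1}$, and conclude via the $\s$-Galois correspondence. The only cosmetic difference is that you pass through $E^G = E^{H_1}\cap E^{H_2}$ whereas the paper uses the simpler containment $E^G \subseteq E^{H_i}$ directly; your extra care in verifying the compatibility of the $H_i$-actions on $R\otimes_k S$ and $R_i\otimes_k S$ makes explicit a point the paper leaves implicit.
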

\begin{proof}
According to Lemma \ref{lemma: linearization}, we can identify $G$ with a $\s$-closed subgroup of $\GL_n$ for a suitable $n \in \N$ and thus also view $H_1$ and $H_2$ as $\s$-closed subgroups of $\GL_n$. By Proposition~\ref{prop: Tannaka}, for $i=1,2$, there exists a  differential equation $\de(y)=A_iy$ with $A_i \in F_i^{n\times n}$ together with a fundamental solution matrix $Y_i \in \GL_n(R_i)$ such that $\Gal_{Y_i}(R_i/F_i)=H_i$. Since $R_i\subseteq F_0$, we can consider the matrix $Y_1Y_2^{-1} \in \GL_n(F_0)$. The factorization property implies that there exist matrices $B_1\in \GL_n(F_1)$, $B_2\in \GL_n(F_2)$ with $Y_1Y_2^{-1}=B_1^{-1}B_2$. Define $Y=B_1Y_1=B_2Y_2 \in \GL_n(F_0)$ and $A=\de(Y)Y^{-1} \in F_0^{n\times n}$. Then for both $i=1,2$,
\begin{eqnarray*}
A&=&\de(B_iY_i)Y_i^{-1}B_i^{-1} \\
&=& \de(B_i)B_i^{-1}+B_iA_iB_i^{-1} \in F_i^{n\times n}.
\end{eqnarray*}
As $F_1\cap F_2=F_0$, we conclude that $A$ has entries in $F$. Consider the differential equation $\de(y)=Ay$ over $F$ and define $R=F\{Y,1/\det(Y)\}\subseteq F_0$. Since $F_0^\de=F^\de$, $R$ is a $\s$-Picard-Vessiot ring over $F$ for $\de(y)=Ay$ by Lemma \ref{lemma: crit}. Define $H=\Gal_Y(R/F)$. We claim that $H=G$ as $\s$-closed subgroups of $\GL_n$. For $i=1,2$, $R_i=F_i\{Y_i,1/\det(Y_i)\}=F_i\{Y,1/\det(Y)\}$ and $$H_i=\Gal_{Y_i}(R_i/F_i)=\Gal_{B_iY_i}(R_i/F_i)= \Gal_{Y}(R_i/F_i)$$ by Lemma \ref{lemma: compositum}.(i). Furthermore, $\Gal_{Y}(R_i/F_i)\leq \Gal_{Y}(R/F)$ by Lemma \ref{lemma: compositum}.(ii) as $\s$-closed subgroups of $\GL_n$. Hence $H_1,H_2\leq H$ inside $\GL_n$ and thus $G\leq H$. 

 For both $i=1,2$, 
$$\Frac(R)^G\subseteq \Frac(R)^{H_i}\subseteq \Frac(R_i)^{H_i}=F_i,$$ where the last equality follows from Theorem \ref{theo: Galois correspondence}. Hence $\Frac(R)^G\subseteq F_1\cap F_2=F$ and therefore $G=H$ by Theorem~\ref{theo: Galois correspondence}.
\end{proof}

\section{Building blocks}
Theorem \ref{thm: patching} breaks down the problem of realizing a given group as $\sigma$-Galois group over $F=\C(x)$ to the task of realizing suitable \textit{building blocks}, i.e., realizing suitable, generating subgroups as $\sigma$-Galois groups over certain overfields of $F$. In this section, we explicitly construct these building blocks. 
\subsection{Independence of translates.}
In this subsection, we provide results on the algebraic or linear independence of translates of certain exponential, logarithmic and root functions as a preparation for constructing $\s$-Picard-Vessiot rings with $\s$-Galois groups isomorphic to $[\s]_\C\H$ for $\H=\Gm, \Ga$, or a finite cyclic group.

The following lemma corresponds to the case $\H=\Gm$.
\begin{lem}\label{lemma: 1}
For $\gamma \in \C$, the (infinite) set of functions
$$\exp\left(\frac{1}{x-\gamma}\right),\exp\left(\frac{1}{x+1-\gamma}\right),\exp\left(\frac{1}{x+2-\gamma}\right),\dots $$
is algebraically independent over $\C(x)$.
\end{lem}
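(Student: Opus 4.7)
The plan is to reduce to a finite subset and then apply a standard Kolchin--Ostrowski style argument, with the contradiction coming from a pole-order comparison.

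First, since algebraic independence of an infinite family is equivalent to algebraic independence of every finite subfamily, it suffices to prove that $f_0,f_1,\ldots,f_N$ are algebraically independent over $F=\C(x)$ for every $N\geq 0$, where $f_n=\exp\!\left(\tfrac{1}{x+n-\gamma}\right)$. To make this question well posed, I would embed all the $f_n$ into a common $\de$-field extension $E/F$ with $E^{\de}=\C$, for example the field of meromorphic functions on the simply connected open set $\C\setminus\{\gamma-n\mid n\in\N\}$ (or on a right half-plane $\{\operatorname{Re}(x)>\operatorname{Re}(\gamma)\}$). A direct computation shows
\[
\frac{\de(f_n)}{f_n}=-\frac{1}{(x+n-\gamma)^2}\in F,
\]
so each $f_n$ is hyperexponential over $F$.

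Next I would invoke the classical special case of Kolchin--Ostrowski: if $y_1,\ldots,y_N$ lie in a $\de$-extension of $F$ with common constants, if $\de(y_i)/y_i=a_i\in F$, and if $y_1,\ldots,y_N$ are algebraically dependent over $F$, then there exist integers $m_0,\ldots,m_N$, not all zero, and $g\in F^\times$ with $\sum m_i a_i=\de(g)/g$. In our setting such a relation would read
\[
-\sum_{n=0}^{N}\frac{m_n}{(x+n-\gamma)^2}=\frac{\de(g)}{g}.
\]
Writing $g=c\prod_j(x-\alpha_j)^{e_j}\in\C(x)^\times$ with the $\alpha_j$ distinct and $e_j\in\Z$, the right-hand side equals $\sum_j e_j/(x-\alpha_j)$ and therefore has only simple poles; on the other hand, the left-hand side has a pole of order exactly $2$ at every point $x=\gamma-n$ with $m_n\neq 0$. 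Comparing orders at any such point forces all $m_n=0$, contradicting the hypothesis.

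I do not expect a serious obstacle here. The only points requiring attention are the choice of a common ambient $\de$-field with constants exactly $\C$ (to legitimately speak of algebraic dependence over $F$ in one place) and the appeal to Kolchin--Ostrowski to upgrade an arbitrary polynomial relation among the $f_n$ to a multiplicative one; both are routine. The heart of the argument is the pole-order incompatibility between second-order poles on the left and the simple-pole structure of a logarithmic derivative on the right.
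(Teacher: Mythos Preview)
Your proof is correct and essentially identical to the paper's: both reduce to a finite subfamily, invoke Kolchin--Ostrowski to produce a nontrivial integer combination, and then obtain a contradiction by comparing the double poles of $\sum_n m_n/(x+n-\gamma)^2$ with the simple-pole structure of the logarithmic derivative of a rational function. The only difference is cosmetic---you phrase the Kolchin--Ostrowski output as $\sum m_i a_i=\de(g)/g$ for some $g\in F^\times$, while the paper writes $\prod f_j^{e_j}\in\C(x)$ and then takes the logarithmic derivative---and your care about fixing an ambient $\de$-field with constants $\C$ is a point the paper leaves implicit.
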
 
\begin{proof}
Assume to the contrary that the set is algebraically dependent. Then there exists an $m\in \N$ such that $\exp\left(\frac{1}{x-\gamma}\right),\exp\left(\frac{1}{x+1-\gamma}\right),\dots,\exp\left(\frac{1}{x+m-\gamma}\right)$ are algebraically dependent. By the theorem of Kolchin-Ostrowski (see e.g. Section 2 in \cite{MR240106}), this can only happen if there exist a non-zero vector $(e_0,\dots,e_m) \in \Z^{m+1}$ with 
$$f:= \prod \limits_{j=0}^m \left( \exp\left(\frac{1}{x+j-\gamma}\right)\right)^{e_j} \in \C(x).$$ We compute 
$$\frac{\de f}{f}=\sum\limits_{j=0}^m \frac{-e_j}{(x+j-\gamma)^2}.$$ On the other hand, we can factor $f=\alpha\prod_{j=1}^r(x-\alpha_j)^{n_j}$ for suitable $\alpha \in \C$, pairwise distinct elements $\alpha_j \in \C$ and $n_j\in \Z$. Thus 
$$\frac{\de f}{f}=\alpha \sum\limits_{j=1}^r \frac{n_j}{(x-\alpha_j)},$$
a contradiction.
\end{proof}

We next treat the case $\H=\Ga$.

\begin{lem}\label{lemma: 2}
For $\gamma \in \C$, the (infinite) set of functions
$$\log \left(\frac{1}{x-\gamma}+1\right),\log\left(\frac{1}{x+1-\gamma}+1\right),\log\left(\frac{1}{x+2-\gamma}+1\right),\dots $$
is algebraically independent over $\C(x)$.
\end{lem}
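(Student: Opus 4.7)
The plan is to mirror the approach of Lemma \ref{lemma: 1}, but applied to the logarithmic form of the Kolchin-Ostrowski theorem instead of the exponential form. Assume for contradiction that the set is algebraically dependent over $\C(x)$. Then already finitely many, say $y_0,\ldots,y_m$ with $y_j=\log(g_j)$ and $g_j=1+\frac{1}{x+j-\gamma}=\frac{x+j-\gamma+1}{x+j-\gamma}$, are algebraically dependent over $\C(x)$. Since $\de(y_j)=g_j'/g_j\in\C(x)$, the logarithmic version of Kolchin-Ostrowski produces constants $c_0,\ldots,c_m\in\C$, not all zero, with
$$\sum_{j=0}^m c_j\, y_j \;=\; h \;\in\; \C(x).$$

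The next step is to differentiate this relation and exploit the partial-fraction structure. With $\alpha_j=\gamma-j$, one has
$$\frac{g_j'}{g_j}=\frac{1}{x-\alpha_j+1}-\frac{1}{x-\alpha_j},$$
so after differentiating the Kolchin-Ostrowski relation we obtain
$$\sum_{j=0}^m c_j\left(\frac{1}{x-\alpha_j+1}-\frac{1}{x-\alpha_j}\right)=h'\in\C(x).$$

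The crucial point is that $h'$, being the derivative of a rational function, has residue zero at every point of $\mathbb{P}^1_\C$ (since $\frac{d}{dx}\frac{1}{(x-a)^n}$ never yields a simple pole). Noting that $\alpha_j-1=\alpha_{j+1}$, the left-hand side has simple poles exactly at $\alpha_0=\gamma,\alpha_1,\ldots,\alpha_m,\alpha_m-1=\gamma-m-1$, with residues
$$-c_0,\qquad c_{j-1}-c_j \text{ for } 1\leq j\leq m,\qquad c_m.$$
Setting all these to zero telescopes to $c_0=c_1=\cdots=c_m=0$, contradicting the choice of the $c_j$.

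The only real subtlety is invoking the correct form of Kolchin-Ostrowski (for antiderivatives $y_j$ with $\de(y_j)\in\C(x)$ rather than for exponentials $\de(z_j)/z_j\in\C(x)$ as in Lemma \ref{lemma: 1}); both appear in the same classical reference cited there. The residue/telescoping argument afterwards is routine and is essentially the logarithmic analogue of the pole-order calculation used at the end of Lemma \ref{lemma: 1}.
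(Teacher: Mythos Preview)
Your proof is correct and is essentially identical to the paper's own proof: both invoke the logarithmic (antiderivative) form of Kolchin--Ostrowski to obtain a $\C$-linear relation $\sum c_j y_j\in\C(x)$, differentiate, expand in partial fractions, and use that a derivative of a rational function has no simple poles to force the telescoping system $c_0=0$, $c_{j-1}=c_j$, $c_m=0$. Your phrasing in terms of residues is slightly more explicit than the paper's ``all terms need to cancel,'' but the argument is the same.
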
 
\begin{proof}
Assume to the contrary that the set is algebraically dependent. Then there exists an $m\in \N$ such that $\log \left(\frac{1}{x-\gamma}+1\right), \log\left(\frac{1}{x+1-\gamma}+1\right),\dots,\log\left(\frac{1}{x+m-\gamma}+1\right)$ are algebraically dependent. By the theorem of Kolchin-Ostrowski (see e.g. Section 2 in \cite{MR240106}), this can only happen if there exist a non-zero vector $(c_0,\dots,c_m) \in \C^{m+1}$ with 
$$f:= \sum \limits_{j=0}^m c_j \log\left(\frac{1}{x+j-\gamma}+1\right) \in \C(x).$$ We compute 
\begin{eqnarray*}
\de(f)&=&\sum\limits_{j=0}^m \frac{-c_j}{(x+j-\gamma)(x+j+1-\gamma)}=\sum\limits_{j=0}^m \left(\frac{-c_j}{(x+j-\gamma)}+\frac{c_j}{(x+j+1-\gamma)}\right).
\end{eqnarray*}  
As $f$ is contained in $\C(x)$, all terms need to cancel. As the terms with denominator $(x-\gamma)$ and $(x+m+1-\gamma)$ cannot cancel with any other term, we conclude $c_0=c_m=0$. Inductively, we obtain that $c_j=0$ for all $j$, a contradiction.
\end{proof}

Finally, we treat the cyclic case.

\begin{lem}\label{lemma: 3}
For $\gamma \in \C$ and $d\geq 1$ a natural number, consider the (infinite) set of functions
$$f_0=\sqrt[d]{\frac{1}{x-\gamma}+1},f_1=\sqrt[d]{\frac{1}{x+1-\gamma}+1},f_2=\sqrt[d]{\frac{1}{x+2-\gamma}+1},\dots $$
Then for every $j\in \N$, $f_j$ has degree $d$ over $\C(x)(f_0,f_1,\dots,f_{j-1})$.
\end{lem}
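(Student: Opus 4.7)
The plan is to prove the claim by induction on $j \geq 0$, using the valuation theory of $\C(x)$ at a carefully chosen place. Write $K_j := \C(x)(f_0,\ldots,f_{j-1})$, so $K_0 = \C(x)$. Since $f_j^d = \frac{x+j+1-\gamma}{x+j-\gamma}$ lies in $K_j$, showing $[K_j(f_j):K_j]=d$ is equivalent to showing that $y^d-f_j^d$ is irreducible in $K_j[y]$. Because $K_j\supseteq\C$ contains all roots of unity, the standard irreducibility criterion for pure polynomials (see e.g.\ Lang, \emph{Algebra}, VI.9.1) reduces this to proving that $f_j^d$ is not a $p$-th power in $K_j$ for any prime $p\mid d$.

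To achieve this, let $v := v_{j+1}$ denote the discrete valuation on $\C(x)$ at the point $x=\gamma-j-1$, with uniformizer $x+j+1-\gamma$. A direct computation gives $v(f_j^d)=1$. For every $i<j$ one checks $v(f_i^d)=0$, because the rational function $f_i^d=\frac{x+i+1-\gamma}{x+i-\gamma}$ has its only pole at $x=\gamma-i$ and its only zero at $x=\gamma-i-1$, and neither coincides with $\gamma-j-1$ when $i<j$. By Kummer theory in residue characteristic zero, a radical extension $\C(x)(\sqrt[d]{a})/\C(x)$ is unramified at a place $v$ whenever $d\mid v(a)$; applied to $a = f_i^d$ this gives that each $\C(x)(f_i)/\C(x)$ with $i<j$ is unramified at $v$. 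Being Galois extensions (all $d$-th roots of unity lie in $\C$), their compositum $K_j/\C(x)$ is also unramified at $v$.

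Let $w$ be any valuation on $K_j$ extending $v$; by unramifiedness we may normalize so that $w|_{\C(x)^*}=v$, in which case $w(f_j^d)=v(f_j^d)=1$. If we had $f_j^d=g^p$ for some $g\in K_j^*$ and prime $p\mid d$, we would obtain $1=w(f_j^d)=p\cdot w(g)$, impossible since $w(g)\in\Z$ and $p\geq 2$. Thus $y^d-f_j^d$ is irreducible over $K_j$ and the induction closes. The only nontrivial ingredient is the Kummer-theoretic unramifiedness statement, which is entirely standard; everything else is bookkeeping on divisors of rational functions.
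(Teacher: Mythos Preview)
Your proof is correct and follows essentially the same idea as the paper's: both arguments hinge on the place $(x+j+1-\gamma)$ of $\C(x)$, observing that $K_j/\C(x)$ is unramified there (since $v(f_i^d)=0$ for all $i<j$) while $f_j^d$ has valuation $1$, which forces full ramification in $K_j(f_j)/K_j$. The only difference is packaging: the paper concludes via linear disjointness (showing $E\cap L=\C(x)$ because $L=\C(x)(f_j)$ is totally ramified and $E=K_j$ unramified at this place), whereas you feed the same valuation computation directly into Lang's irreducibility criterion for $y^d-f_j^d$.
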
 
\begin{proof}
Define $E=\C(x)(f_0\dots,f_{j-1})$ and $L=\C(x)(f_j)$. We claim that $E$ and $L$ are linearly disjoint over $\C(x)$. Recall that $L/\C(x)$ is cyclic and in particular Galois of degree $d$. Hence it suffices to prove $E\cap L=\C(x)$. 

Since $f_j^d=(x+j+1-\gamma)/(x+j-\gamma)$, $L/\C(x)$ is totally ramified at $(x+j+1-\gamma)$ as $f_j$ has valuation $1/d$ at this place. Therefore, the subfield $E\cap L$ is also totally ramified at $(x+j+1-\gamma)$ of ramification index $$e=[E\cap L:\C(x)].$$ 

On the other hand, $E/\C(x)$ is unramified at $(x+j+1-\gamma)$. To see this, consider the chain of fields $E_0=\C(x)(f_0)$, $E_1=E_0(f_1),\dots,E_{j-1}=E_{j-2}(f_{j-1})=E$. Then it suffices to show that $E_m/E_{m-1}$ is unramified at all places $\mathfrak{p}$ lying over $(x+j+1-\gamma)$ for $m=1,\dots,j-1$. The minimal polynomial of $E_m/E_{m-1}$ divides $T^d-(x+m+1-\gamma)/(x+m-\gamma)$ which has coefficients in $\mathcal{O}_\mathfrak{p}$ and reduces to a separable polynomial modulo $\mathfrak{p}$. Hence $\mathfrak{p}$ splits completely in  $E_m/E_{m-1}$ (\cite[Thm. 3.3.7]{MR2464941}) and the claim follows. 

Therefore, the subfield $E\cap L$ of $E$ is also unramified at $(x+j+1-\gamma)$, so $e=1$ and we conclude $E\cap L=\C(x)$.
\end{proof}

\subsection{Constructing explicit $\s$-Picard-Vessiot rings}

In this section, we work over the base field $F=\C(x)$ considered as a $\ds$-field via $\de=\frac{d}{dx}$ and $\s(f(x))=f(x+1)$. For $m\in \N$, let $L(m)$ be as defined in Example \ref{ex direct limit}.(i).

\begin{prop}\label{prop: 123}
	Let $m\in \N$ and let $\H$ be either $\Gm$, $\Ga$ or a finite cyclic group. Then there exists a $\s$-Picard-Vessiot ring $R$ over $F=\C(x)$ with $\s$-Galois group isomorphic to $[\s]_\C\H$ such that $R\subseteq L(m)$ as an $F$-$\ds$-subalgebra.
\end{prop}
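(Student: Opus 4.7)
The plan is to handle the three cases ($\H = \Gm$, $\Ga$, or a finite cyclic group $\mu_d$) in parallel by choosing, in each case, an explicit element $y \in L(m)$ with $y^d \in F$, or $\de(y)/y \in F$, or $\de(y) \in F$, and then applying one of Propositions \ref{prop: Gm}, \ref{prop: Ga}, or \ref{prop: cyclic}. The first thing I would observe is that $L(m)^\de = \C = F^\de$ by Example \ref{ex direct limit}(i), so the constant-field hypothesis of those three propositions is automatic once the element $y$ is exhibited inside $L(m)$.

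For the construction of $y$, the natural choice is to place the only singularity of the defining formula at the point $mi$, which lies in the deleted segment $[-r,0]+mi$ for every $r\geq 1$. Concretely, for $\H=\Gm$ I would take
\[
y \;=\; \exp\!\left(\tfrac{1}{x-mi}\right),
\]
for $\H=\Ga$ I would take
\[
y \;=\; \log\!\left(\tfrac{1}{x-mi}+1\right),
\]
and for $\H$ cyclic of order $d$ I would take
\[
y \;=\; \sqrt[d]{\tfrac{1}{x-mi}+1}.
\]
In each case the expression inside $\exp$, $\log$ or the radical has its only singularities (a pole at $mi$, a zero at $mi-1$) located in $[-1,0]+mi$, and on $V_{1,m}=\mathbb{P}^1_\C\setminus([-1,0]+mi)$ the relevant branch of $\log$ or $d$-th root exists because $V_{1,m}$ is simply connected (a sphere minus a line segment is contractible). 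Hence $y \in F_{V_{1,m}} \subseteq L(m)$; since $L(m)$ is a $\s$-ring, all its translates $\s^i(y)$ lie in $L(m)$ automatically.

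With $y$ in hand, the three propositions of Section 3 immediately produce a $\s$-Picard-Vessiot ring $R \subseteq L(m)$ over $F$ with $\s$-Galois group a $\s$-closed subgroup of $[\s]_\C\H$. To upgrade this to an equality, I need to verify the transcendence/degree condition of each proposition: algebraic independence of $y,\s(y),\s^2(y),\ldots$ over $F$ in the $\Gm$ and $\Ga$ cases, and the fact that $\s^i(y)$ has degree $d$ over $F(y,\ldots,\s^{i-1}(y))$ in the cyclic case. But these are precisely the statements of Lemmas \ref{lemma: 1}, \ref{lemma: 2}, \ref{lemma: 3} applied with $\gamma = mi$, so they come for free.

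The main technical point to be careful about is the second step: checking that $y$ genuinely lies in $L(m)$ in the $\Ga$ and cyclic cases, where we need a well-defined branch of $\log$ or of the $d$-th root. This reduces to simple-connectedness of $V_{1,m}$, which is the only topological input in the argument. Once that is noted, the rest is a direct application of the criteria established in Sections 3 and 4.1.
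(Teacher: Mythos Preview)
Your proposal is correct and follows essentially the same path as the paper: the same choices of $y$ in all three cases, the same appeals to Propositions~\ref{prop: Gm}, \ref{prop: Ga}, \ref{prop: cyclic}, and the same use of Lemmas~\ref{lemma: 1}, \ref{lemma: 2}, \ref{lemma: 3} with $\gamma=mi$. The only (minor) deviation is in how you justify that a branch of $\log$ or $\sqrt[d]{\ }$ exists on $V_{1,m}$: the paper checks directly that the M\"obius map $x\mapsto \tfrac{1}{x-mi}+1$ sends $V_{1,m}$ into the cut plane $\C\setminus\R_-$, whereas you invoke simple connectedness of $V_{1,m}$ together with the observation that the pole and zero lie in the deleted segment; both arguments are valid and equally short.
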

\begin{proof}
	We begin with $\H=\Gm$. Define $a=-(x-mi)^2 \in F$. Then $y=\exp(\frac{1}{x-mi})$ solves the differential equation $\de(y)=ay$ and $y$ is holomorphic on $\mathbb P^1_\C\backslash\{mi\}\supseteq V_{1,m}$. Hence $y \in F_{V_{1,m}}\subseteq L(m)$. Define $R=F\{y,y^{-1}\}\subseteq L(m)$. Then $R/F$ is a $\s$-Picard-Vessiot ring for $\de(y)=ay$ by Lemma \ref{lemma: crit}, since $L(m)^\de=\C=F^\de$. Using Proposition \ref{prop: Gm} together with Lemma \ref{lemma: 1}, we conclude that its $\s$-Galois group is isomorphic to $[\s]_\C\Gm$.
	
	\medskip
	
	We next treat the case $\H=\Ga$. Recall that the complex logarithm is a holomorphic function on $\C\backslash \R_-$, where $\R_-$ denotes the interval $(-\infty,0]$. As $x\mapsto \frac{1}{x-mi}+1$ defines a holomorphic function $V_{1,m}\to \C\backslash \R_-$, we conclude that $y=\log\left(\frac{1}{x-mi}+1 \right)$ is holomorphic on $V_{1,m}$ and in particular, $y\in F_{V_{1,m}}\subseteq L(m)$. Also note that $\de(y)$ is contained in $\C(x)=F$. Hence $R=F\{y\}\subseteq L(m)$ is a $\s$-Picard-Vessiot ring over $F$ with $\s$-Galois group isomorphic to $[\s]_\C\Ga$ by Proposition \ref{prop: Ga} together with Lemma \ref{lemma: 2}. 
	
	\medskip
	
	Finally we treat the case that $\H$ is a finite cyclic group. Let $d\in \N$ be the order of $\H$. 
	Recall that the complex $d$-th root $\sqrt[d]{x}=\exp(\log(x)/d)$ is a holomorphic function on $\C\backslash \R_-$, where $\R_-$ denotes the interval $(-\infty,0]$. As $x\mapsto \frac{1}{x-mi}+1$ defines a holomorphic function $V_{1,m}\to \C\backslash \R_-$, we conclude that $y=\sqrt[d]{\frac{1}{x-mi}+1}$ is holomorphic on $V_{1,m}$ and in particular, $y\in F_{V_{1,m}}\subseteq L(m)$. Hence $R=F\{y,y^{-1}\}\subseteq L(m)$ is a $\s$-Picard-Vessiot ring over $F$ with $\s$-Galois group $[\s]_\C\H$ by Proposition \ref{prop: cyclic} together with Lemma \ref{lemma: 3}. 
\end{proof}

\section{Main result}

The following lemma allows to base change a $\s$-Picard-Vessiot $R/F$ from $F$ to a $\ds$-overfield $F_2\supseteq F$ without shrinking the $\s$-Galois group under certain assumptions. An analogous statement for Picard-Vessiot rings has been proved in \cite[Lemma 2.9]{BachmayrHarbaterHartmannWibmer:DifferentialEmbeddingProblemsOverComplexFunctionFields}. 

\begin{lem}\label{lemma: compositum in diamond}
Let $(F,F_1,F_2,F_0)$ be a quadruple of $\ds$-fields with $F\subseteq F_1,F_2\subseteq F_0$ such that $F_1\cap F_2=F$ and $F_0^\de=F^\de$. Let $R/F$ be a $\s$-Picard-Vessiot ring such that $R\subseteq F_1$ as an $F$-$\ds$-subalgebra. Then the compositum $F_2R\subseteq F_0$ is a $\s$-Picard-Vessiot ring over $F_2$ with the same $\s$-Galois group as $R/F$. 
\end{lem}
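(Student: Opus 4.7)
The plan is to verify two things in order: first, that $F_2R$ fits the definition of a $\s$-Picard-Vessiot ring over $F_2$; second, that the automatic inclusion $\Gal(F_2R/F_2) \leq \Gal(R/F)$ coming from Lemma \ref{lemma: compositum}(ii) is actually an equality.

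For the first step, I would choose matrices $A \in F^{n\times n}$ and $Y \in \GL_n(R)$ with $\de(Y) = AY$ such that $R = F\{Y, 1/\det(Y)\}$. Then $F_2R = F_2\{Y, 1/\det(Y)\}$ is contained in $F_0$, and since $F \subseteq F_2 \subseteq F_0$ forces $F_2^\de = F^\de$ via the hypothesis $F_0^\de = F^\de$, Lemma \ref{lemma: crit} immediately delivers the $\s$-Picard-Vessiot property of $F_2R$ over $F_2$. Setting $G = \Gal_Y(R/F)$ and $H = \Gal_Y(F_2R/F_2)$, Lemma \ref{lemma: compositum}(ii) yields $H \leq G$ as $\s$-closed subgroups of $\GL_n$.

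The main task is to upgrade this inclusion to an equality, and here I would rely on the $\s$-Galois correspondence (Theorem \ref{theo: Galois correspondence}) applied to both $R/F$ and $F_2R/F_2$. Write $E = \Frac(R)$ and $E_2 = \Frac(F_2R)$; by hypothesis $E \subseteq F_1$, and clearly $E_2 \subseteq F_0$. The plan is to show $E^H \subseteq F$, from which $H = G$ follows by the correspondence. For any \ks-algebra $S$ and $h \in H(S)$, the automorphism $h$ of $F_2R \otimes_k S$ restricts to $R \otimes_k S$ (since $h(Y) = Y\varphi(h)$ for some $\varphi(h) \in \GL_n(S)$), and the extensions of these restricted actions to total rings of fractions agree on $E$. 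Thus an element of $E$ invariant under $H$ in the $R/F$-sense is also invariant under $H$ in the $F_2R/F_2$-sense, giving $E^H \subseteq (E_2)^H = F_2$. Combined with $E^H \subseteq E \subseteq F_1$ and the intersection hypothesis $F_1 \cap F_2 = F$, this produces $E^H \subseteq F$.

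The only step likely to require care is the compatibility of the two $H$-actions — a notationally fiddly but routine verification involving the natural inclusion $R \otimes_k S \hookrightarrow F_2R \otimes_k S$ and its extension to total rings of fractions, using the explicit shape of elements of $H(S)$ as matrices acting by right multiplication on $Y$.
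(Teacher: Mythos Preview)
Your proposal is correct and follows essentially the same route as the paper's proof: invoke Lemma~\ref{lemma: compositum}(ii) to get $H\leq G$, then use the $\s$-Galois correspondence twice together with $E^H\subseteq \Frac(F_2R)^H=F_2$ and $E^H\subseteq F_1$ to force $E^H\subseteq F$ and hence $H=G$. The only difference is that you spell out the compatibility of the two $H$-actions on $E$, a point the paper leaves implicit.
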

\begin{proof}
Let $G$ be the $\s$-Galois group of $R/F$. By Lemma \ref{lemma: compositum} (ii), $F_2R/F_2$ is a $\s$-Picard-Vessiot ring with $\s$-Galois group $H$ a $\s$-closed subgroup of $G$. We consider the functorial invariants $\Frac(R)^H\subseteq \Frac(R)\subseteq F_1$. As $\Frac(R)^H\subseteq \Frac(F_2R)^H=F_2$ by Theorem \ref{theo: Galois correspondence}, we conclude $\Frac(R)^H\subseteq F_1\cap F_2=F$ and thus $H=G$ by Theorem \ref{theo: Galois correspondence}.  
\end{proof}

We are now in a position to prove our main theorem:
\begin{thm} \label{theo: main}
Consider $F=\C(x)$ as $\ds$-field with $\de=\frac{d}{dx}$ and $\s(f(x))=f(x+1)$. Then, for every linear algebraic group $\G$ over $\C$, there exists a $\s$-Picard-Vessiot ring over $F$ with $\s$-Galois group isomorphic to $[\s]_\C\G$. 
\end{thm}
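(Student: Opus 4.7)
The theorem will follow by inductively applying Theorem~\ref{thm: patching} to generators supplied by Proposition~\ref{prop: generating}. Writing $[\s]_\C\G = \langle [\s]_\C\H_1,\ldots,[\s]_\C\H_r\rangle$ with each $\H_j$ isomorphic to $\Gm$, $\Ga$, or a finite cyclic group, set $G_i = \langle [\s]_\C\H_1,\ldots,[\s]_\C\H_i\rangle$ for $1 \leq i \leq r$. The plan is to prove by induction on $i$ that there exists a $\s$-Picard-Vessiot ring $R_i/F$ with $\s$-Galois group isomorphic to $G_i$ such that $R_i \subseteq F(i)$ as an $F$-$\ds$-subalgebra, where $F(i)$ is the $\ds$-field from Example~\ref{ex direct limit}(ii). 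The case $i = r$ then yields the theorem, since $G_r = [\s]_\C\G$.

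The base case $i = 1$ is immediate from Proposition~\ref{prop: 123} applied with $m = 1$, noting that $L(1) = F(1)$ by construction. For the inductive step from $i - 1$ to $i$, invoke Example~\ref{ex diamond} to obtain the $\ds$-diamond with the factorization property $(F, F(i-1), L(i), F(i))$, which satisfies $F(i)^\de = \C = F^\de$. Apply Proposition~\ref{prop: 123} with $m = i$ to produce a $\s$-Picard-Vessiot ring $S_i/F$ with $\s$-Galois group isomorphic to $[\s]_\C\H_i$ and $S_i \subseteq L(i) \subseteq F(i)$. The hypotheses of Lemma~\ref{lemma: compositum in diamond}, namely $F(i-1) \cap L(i) = F$ and $F(i)^\de = F^\de$, are symmetric in the two middle fields, so the lemma applies twice: once to $R_{i-1} \subseteq F(i-1)$, yielding that the compositum $T_2 := L(i) \cdot R_{i-1} \subseteq F(i)$ is a $\s$-Picard-Vessiot ring over $L(i)$ with $\s$-Galois group $G_{i-1}$; and once with the roles of the middle fields swapped, applied to $S_i \subseteq L(i)$, yielding that $T_1 := F(i-1) \cdot S_i \subseteq F(i)$ is a $\s$-Picard-Vessiot ring over $F(i-1)$ with $\s$-Galois group $[\s]_\C\H_i$.

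This puts us precisely in the setup of Theorem~\ref{thm: patching}: take the diamond $(F, F(i-1), L(i), F(i))$, the generating $\s$-closed subgroups $H_1 = [\s]_\C\H_i$ and $H_2 = G_{i-1}$ of $G_i$, and the rings $R_1 = T_1$ over $F(i-1)$ and $R_2 = T_2$ over $L(i)$, both sitting inside $F(i)$ with the prescribed Galois groups. The conclusion of the patching theorem delivers a $\s$-Picard-Vessiot ring $R_i/F$ with $\s$-Galois group isomorphic to $\langle H_1, H_2\rangle = G_i$ and $R_i \subseteq F(i)$, closing the induction. The entire difficulty of the result has already been absorbed into the preparatory work of the preceding sections: the crux is the careful choice of the nested $\ds$-fields $F(m)$ and $L(m)$, which provide habitats roomy enough to host successive building blocks while remaining geometrically separated through the use of disjoint singular rays $[-r,0] + ji$, so that the intersection condition $F(i-1) \cap L(i) = F$ is preserved at every stage of the induction.
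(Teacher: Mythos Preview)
Your proof is correct and follows essentially the same approach as the paper's own argument: an induction on the number of generating subgroups from Proposition~\ref{prop: generating}, using the $\ds$-diamond $(F, F(i-1), L(i), F(i))$ of Example~\ref{ex diamond} at each step, lifting both the inductively constructed ring and the new building block via Lemma~\ref{lemma: compositum in diamond}, and then patching with Theorem~\ref{thm: patching}. Your explicit notation $G_i$ for the intermediate groups and your remark that the hypotheses of Lemma~\ref{lemma: compositum in diamond} are symmetric in the two middle fields are in fact slight expository improvements over the paper's presentation.
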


\begin{proof}
Set $G=[\s]_\C\G$. By Proposition \ref{prop: generating}, there exist $\s$-closed subgroups $H_1,\dots,H_m$ of $G$ with $G=\langle H_1, \dots,H_m\rangle$ and such that for each $i$, $H_i\simeq [\s]_\C\H_i$, where $\H_i$ is $\Gm$, $\Ga$, or a finite cyclig group (depending on $i$). 

For $r\in \N$, let 
$$U_{r,m}=\{x \in \mathbb P^1_\C \mid x \notin \bigcup\limits_{l=1}^m\left( [-r,0]+li\right)\}$$ as in Example \ref{ex direct limit}.(ii)
and let $F_{U_{r,m}}$ denote the field of meromorophic functions on $U_{r,m}$. As explained in Example \ref{ex direct limit}, the direct limit $F(m)=\varinjlim\limits_{r\in \N} F_{U_{r,m}}$ is a $\ds$-field extension of $F$ with $F(m)^\de=\C$. We claim that there exists a $\s$-Picard-Vessiot ring $R(m)$ over $F$ with $\s$-Galois group $G$ and $R(m)\subseteq F(m)$ as a $F$-$\ds$-subalgebra. We prove the claim by induction on $m$.

If $m=1$, then $F(m)=F(1)=L(1)$ with $L(1)$ as defined in Example \ref{ex direct limit}.(ii) and the claim follows from Proposition \ref{prop: 123} applied to $m=1$. 

Now assume that we constructed a $\s$-Picard-Vessiot ring $R(m-1)\subseteq F(m-1)$ with $\s$-Galois group $\langle H_1,\dots, H_{m-1}\rangle$. We define connected metric open subsets $$V_{r,m}=\{x \in \mathbb P^1_\C \mid x \notin [-r,0]+mi\}$$ 
 as in Example \ref{ex direct limit}.(i). Hence $U_{r,m-1}\cap V_{r,m}=U_{r,m}$ is connected and $U_{r,m-1}\cup V_{r,m}=\X$ for all $r\in \N$. We define 
\begin{eqnarray*}
F_1&=&\varinjlim\limits_{r\in\N} F_{U_{r,m-1}}=F(m-1) \\
F_2&=&\varinjlim\limits_{r\in\N}  F_{V_{r,m}}=L(m) \\ 
F_0&=&\varinjlim\limits_{r\in\N}  F_{U_{r,m-1}\cap V_{r,m}}=F(m).
\end{eqnarray*}
As explained in Example \ref{ex diamond}, $(F,F_1,F_2,F_0)$ is a $\ds$-diamond  with the factorization property and moreover, $F_0^\de=\C=F^\de$ holds. By Lemma \ref{lemma: compositum in diamond}, we can thus lift $R(m-1)\subseteq F_1$ to a $\s$-Picard-Vessiot ring $R_2=F_2R(m-1)$ over $F_2$ with $\s$-Galois group $\langle H_1,\dots, H_{m-1}\rangle$ and $R_2\subseteq F_0$. 

On the other hand, Proposition \ref{prop: 123} provides us with a $\s$-Picard-Vessiot ring $\tilde R/F$ with $\s$-Galois group $H_m$ and $\tilde R\subseteq L(m)= F_2$ as a $F$-$\ds$-subalgebra. Again by Lemma \ref{lemma: compositum in diamond}, we may lift $\tilde R$ to a $\s$-Picard-Vessiot ring $R_1=F_1\tilde R$ over $F_1$ with $\s$-Galois group $H_m$ and $R_1\subseteq F_0$. 

Using the patching result Theorem \ref{thm: patching}, we obtain a $\s$-Picard-Vessiot ring $R(m)$ over $F$ with $\s$-Galois group $\langle H_1,\dots,H_m\rangle$ and with $R(m)\subseteq F_0=F(m)$ as claimed. 
\end{proof}

\section{Difference algebraic groups that are not $\s$-Galois groups}

Since, as shown above, all linear algebraic groups over $\C$, considered as $\s$-algebraic groups, occur as $\s$-Galois groups over $\C(x)$, one may feel tempted to conjecture that in fact all $\s$-algebraic groups over $\C$ occur as $\s$-Galois groups over $\C(x)$.
 However, this is not true. For example, as we will see below, the $\s$-closed subgroup $G$ of $\GL_1$ defined by $G(S)=\{g\in\GL_1(S)\ | \ g^2=1,\ \s(g)=g \}$ for any $\C$-$\s$-algebra $S$ is not a $\s$-Galois group over $\C(x)$. Moreover, we show that no proper non-trivial subgroup of the additive group $\mathbb{G}_a$ is a $\s$-Galois group over $\C(x)$.

 \subsection{A necessary criterion} 
In this section, we will isolate two properties that any $\s$-Galois group over $\C(x)$ must have. The main result of this section is the following.

\begin{thm} \label{theo: sgalois groups are sreduced and sconnected}
	If $G$ is a $\s$-Galois group over the $\ds$-field $\C(x)$ with derivation $\de=\frac{d}{dx}$ and endomorphism $\s$ given by $\s(f(x))=f(x+1)$, then $G$ is $\s$-reduced and $\s$-connected.
\end{thm}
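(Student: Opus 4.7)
The plan is to prove the two properties independently, exploiting two special features of $F=\C(x)$: that $\s\colon F\to F$ is bijective, and that the étale fundamental group of $\mathbb{A}^1_\C$ is trivial.

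For $\s$-reducedness, let $R$ be a $\s$-Picard-Vessiot ring over $F$ with $\s$-Galois group $G$, and recall that the canonical isomorphism $R\otimes_k k\{G\}\simeq R\otimes_F R$ from \eqref{eqn: alg torsor isom} is $\s$-equivariant. Since $R$ is a nonzero (hence faithfully flat) $k$-algebra, it suffices to prove that $\s$ is injective on $R\otimes_F R$. Now $R$ is an integral domain, so the nonzero ring endomorphism $\s\colon R\to R$ is automatically injective. The extra input is that $F$ is inversive: this guarantees $\s(R)$ is an $F$-subalgebra of $R$ (because $\s(F)=F\subseteq\s(R)$), so flatness over the field $F$ gives an injection $\s(R)\otimes_F\s(R)\hookrightarrow R\otimes_F R$. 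Composing with the bijection $R\otimes_F R\to\s(R)\otimes_F\s(R)$ induced by the isomorphism $R\to\s(R)$ yields the injectivity of $\s\otimes\s=\s$ on $R\otimes_F R$, as desired.

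For $\s$-connectedness, the goal is to show that the maximal $\s$-étale quotient $\pis(G)$ of $G$ is trivial. By the $\s$-Galois correspondence (Theorem~\ref{theo: Galois correspondence}) this quotient corresponds to an intermediate $\ds$-field $F\subseteq F'\subseteq\Frac(R)$ which is itself a $\s$-Picard-Vessiot extension of $F$ whose $\s$-Galois group is $\pis(G)$. Because $\pis(G)$ is $\s$-étale, unwinding definitions shows that the associated intermediate $\s$-Picard-Vessiot ring inside $F'$ is finite étale as an ordinary $F$-algebra; being contained in a field, it is integral, and hence is a genuine finite field extension $L/F$ equipped with an endomorphism extending $\s\colon F\to F$.

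The punchline is that $F=\C(x)$ admits no nontrivial such extension. Geometrically $L/F$ corresponds to a finite cover of $\mathbb{P}^1_\C$ branched over a finite set $B\subset\mathbb{P}^1_\C$. An extension of $\s$ to $L$ forces the shift $x\mapsto x+1$ to permute $B$ in $\mathbb{P}^1_\C$, but the only finite orbit of the shift is $\{\infty\}$, so $B\subseteq\{\infty\}$ and $L$ is unramified over $\mathbb{A}^1_\C$. Since $\pi_1^{\text{\'et}}(\mathbb{A}^1_\C)=1$ in characteristic zero, this forces $L=F$, hence $F'=F$ and $\pis(G)$ is trivial. I expect the main technical obstacle to lie in this second half: namely translating the abstract statement ``$\pis(G)$ is $\s$-étale'' into the concrete finiteness statement about the corresponding $\s$-Picard-Vessiot ring that enables the geometric argument. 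The $\s$-reducedness portion, by contrast, is essentially a flatness computation that comes out cleanly from the inversiveness of $F$.
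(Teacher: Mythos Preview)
Your argument is correct and follows the same overall two-step strategy as the paper (inversiveness of $F$ for $\s$-reducedness, absence of nontrivial finite $\s$-field extensions of $\C(x)$ for $\s$-connectedness), but the execution differs in instructive ways. For $\s$-reducedness the paper simply cites \cite[Cor.~4.4]{DiVizioHardouinWibmer:DifferenceGaloisTheoryOfLinearDifferentialEquations}; your direct computation via the torsor isomorphism \eqref{eqn: alg torsor isom} is essentially what lies behind that citation. For $\s$-connectedness the paper takes a more constructive route: it builds $\pis(R/F)$ explicitly as the union of {\ssetale} $F$-$\s$-subalgebras of $R$ and proves (Proposition~\ref{prop: piss correspond}) that this is a finite $\s$-field extension of $F$ which is itself a $\s$-Picard--Vessiot ring with group $\pis(G)$. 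You instead invoke the Galois correspondence to extract the intermediate extension abstractly and then read off finiteness from the torsor isomorphism together with $\dim_k k\{\pis(G)\}<\infty$; this is slicker but less explicit. For the key fact that $\C(x)$ has no nontrivial finite $\s$-field extension, the paper cites Cohn's book, while your branched-cover argument (shift-invariance of the branch locus forces $B\subseteq\{\infty\}$, and $\pi_1^{\text{\'et}}(\mathbb{A}^1_\C)=1$) is a pleasant self-contained alternative.

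One small correction: the result you invoke---that the intermediate field fixed by a \emph{normal} $\s$-closed subgroup is again a $\s$-Picard--Vessiot extension with the quotient as its $\s$-Galois group---is the \emph{second} fundamental theorem \cite[Thm.~3.3]{DiVizioHardouinWibmer:DifferenceGaloisTheoryOfLinearDifferentialEquations}, not Theorem~\ref{theo: Galois correspondence} (which only records the bijection between $\s$-closed subgroups and intermediate $\ds$-fields). You should cite that instead, and note explicitly that the kernel of $G\to\pis(G)$ is normal.
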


The definition of $\s$-reduced and $\s$-connected is given below. Theorem \ref{theo: sgalois groups are sreduced and sconnected} is proved at the end of this section. The fact that every $\s$-Galois group over $\C(x)$ is $\s$-reduced follows rather directly from \cite{DiVizioHardouinWibmer:DifferenceGaloisTheoryOfLinearDifferentialEquations} and the fact that $\s\colon \C(x)\to \C(x)$ is surjective. On the other hand, the fact that every $\s$-Galois group over $\C(x)$ is $\s$-connected, essentially goes back to the fact that $\C(x)$ does not have any non-trivial finite difference field extensions.

We first discuss $\s$-connected $\s$-algebraic groups and the related notion of the group $\pis(G)$ of $\s$-connected components of a $\s$-algebraic group $G$. We do not strive for a comprehensive study of these notions. The interested reader is referred to Section 4.2 of \cite{Wibmer:Habil}. We will only introduce the definitions and results necessary for proving Theorem \ref{theo: sgalois groups are sreduced and sconnected}. Most of the required difference algebraic results appeared in \cite{TomasicWibmer:Babbit}. 

\medskip

Let $k$ be a $\s$-field. To motivate the definition of the group of $\s$-connected components of a $\s$-algebraic group, let us first recall the definition of the group of connected components of an algebraic group. See e.g., \cite[Chapter 6]{Waterhouse:IntroductiontoAffineGroupSchemes} or \cite[Section 2, g]{Milne:AlgebraicGroupsTheTheoryOfGroupSchemesOfFiniteTypeOverAField}.

Recall that a group scheme $\G$ of finite type over $k$ is \emph{\'{e}tale} if $k[\G]$ is an \'{e}tale $k$-algebra, i.e., $k[\G]\otimes_k\overline{k}$ is a a finite direct product of copies of $\overline{k}$.
The group $\pi_0(\G)$ of connected components of $\G$ can be defined through the following universal property:   
There exists a morphism $\G\to \pi_0(\G)$ of affine group schemes over $k$ such that $\pi_0(\G)$ is \'{e}tale and for every \'{e}tale group scheme $\H$ with a morphism $\G\to \H$, there exists a unique morphism $\pi_0(\G)\to \H$ such that
$$
\xymatrix{
	\G \ar[rr] \ar[rd] & & \pi_0(\G) \ar@{..>}[ld] \\
	& \H &	
}
$$
commutes. The identity component $\G^o$ of $\G$ can be defined as the kernel of  $\G\to \pi_0(\G)$.  The existence of $\pi_0(\G)$ can be established as follows: Let $\pi_0(k[\G])$ denote the union of the \'{e}tale $k$-subalgebras of $k[\G]$. Then one can show that $\pi_0(k[\G])$ is an \'{e}tale algebra and a Hopf subalgebra of $k[\G]$. So $\pi_0(\G)=\Spec(\pi_0(k[\G]))$ is an \'{e}tale algebraic group and the morphism $\G\to \pi_0(\G)$ corresponding to the inclusion $\pi_0(k[\G])\to k[\G]$ has the desired universal property.

To follow a similar path for difference algebraic groups we first need to define an appropriate difference analog of \'{e}tale algebras. Following \cite{TomasicWibmer:Babbit} we make the following definition.

\begin{Def} A \ks-algebra $R$ is \emph{$\s$-separable} if $\s\colon R\otimes_ kk'\to R\otimes_k k'$ is injective for every $\s$-field extension $k'$ of $k$. A $\s$-separable \ks-algebra is \emph{\ssetale} if it is an \'{e}tale $k$-algebra. 
\end{Def}

For a \ks-algebra $R$, we denote the union of all {\ssetale} \ks-subalgebras of $R$ with $\pis(R/k)$. If the base $\s$-field $k$ is clear form the context we will usually write $\pis(R)$ instead of $\pis(R/k)$. We know from \cite[Rem. 1.18]{TomasicWibmer:Babbit} that $\pis(R)$ is a \ks-subalgebra of $R$. 

\begin{Def}
	A $\s$-algebraic group $G$ is {\ssetale} if $k\{G\}$ is a {\ssetale} \ks-algebra.
\end{Def}

\begin{ex} \label{ex: ssetale}
	Let $G$ be the $\s$-closed subgroup of $\GL_1$ defined by $G(S)=\{g\in\GL_1(S)\ | \ g^2=1,\ \s(g)=g \}$ for any \ks-algebra $S$. Then $k\{G\}=k\times k$, with $\s$ given by $\s(a,b)=(\s(a),\s(b))$. Thus $G$ is {\ssetale}.
	
	On the other hand, the group $G$ defined by $G(S)=\{g\in\GL_1(S)\ | \ g^2=1,\ \s(g)=1 \}$ for any \ks-algebra $S$ is not {\ssetale}.	
\end{ex}

The following proposition was essentially proved in \cite{TomasicWibmer:Babbit}, but there the results were formulated in an algebraic manner. Here we give a more geometric interpretation.
\begin{prop} \label{prop: existense of pis}
	Let $G$ be a $\s$-algebraic group. Then there exists a {\ssetale} $\s$-algebraic group $\pis(G)$ together with a morphism $G\to \pis(G)$ of $\s$-algebraic groups satisfying the following universal property: If $H$ is a {\ssetale} $\s$-algebraic group with a morphism $G\to H$, then there exists a unique morphism $\pis(G)\to H$ such that 
	$$
	\xymatrix{
		G \ar[rr] \ar[rd] & & \pis(G) \ar@{..>}[ld] \\
		& H &	
	}
	$$
	commutes.
\end{prop}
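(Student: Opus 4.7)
Write $A=k\{G\}$ and let $B=\pis(A)$ denote the union of all {\ssetale} \ks-subalgebras of $A$, which by \cite[Rem.~1.18]{TomasicWibmer:Babbit} is itself a \ks-subalgebra of $A$. The goal is to promote $B$ to a $\s$-Hopf subalgebra of $A$ that is finite-dimensional and {\ssetale} as a \ks-algebra, and then take $\pis(G)$ to be the $\s$-algebraic group represented by $B$, with structure morphisms dual to those on $B$; the canonical morphism $G\to\pis(G)$ is the one dual to the inclusion $B\hookrightarrow A$. The verification then splits into the three standard pieces: Hopf structure, {\ssetale}-ness (with finite-dimensionality), and universal property, in close analogy with the classical construction of $\pi_0(\G)$ sketched just before the proposition.

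\textbf{Hopf structure.} The counit $\varepsilon\colon A\to k$ restricts trivially to $B$, and the antipode $S\colon A\to A$ is a \ks-algebra automorphism, so it sends {\ssetale} subalgebras to {\ssetale} subalgebras, giving $S(B)=B$. The key step is that $\Delta(B)\subseteq B\otimes_k B$, which I would reduce to the compatibility $\pis(A\otimes_k A)=\pis(A)\otimes_k\pis(A)$: granting this, for any {\ssetale} \ks-subalgebra $B_0\subseteq A$, since $\Delta$ is split injective (via the counit), $\Delta(B_0)$ is an isomorphic, hence {\ssetale}, \ks-subalgebra of $A\otimes_k A$, so $\Delta(B_0)\subseteq\pis(A\otimes_k A)=B\otimes_k B$. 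The compatibility itself should follow from the behaviour of $\s$-separability and étaleness under tensor product, as developed in \cite{TomasicWibmer:Babbit}.

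\textbf{{\ssetale}-ness, finiteness, and universal property.} Next I would argue that $B$ is itself {\ssetale}: $\s$-separability passes to filtered unions of \ks-subalgebras, and for the \'{e}taleness I would invoke the fact that the Hopf $\s$-algebra structure on $B$, sitting inside the finitely $\s$-generated \ks-algebra $A$, forces the filtered union defining $B$ to stabilise at a single finite \'{e}tale \ks-subalgebra; in particular $B$ is finite-dimensional over $k$ and hence corresponds to a {\ssetale} $\s$-algebraic group $\pis(G)$. For the universal property, given a {\ssetale} $\s$-algebraic group $H$ and a morphism $\phi\colon G\to H$, the dual $\phi^*\colon k\{H\}\to A$ has image a {\ssetale} \ks-subalgebra of $A$, hence contained in $B$; this produces the desired factorisation $k\{H\}\to B\to A$, i.e.\ $G\to\pis(G)\to H$, and uniqueness of the factorisation follows immediately from injectivity of $B\hookrightarrow A$.

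\textbf{Main obstacle.} The hardest points are the two difference-algebraic inputs flagged above: the tensor-product compatibility $\pis(A\otimes_k A)=\pis(A)\otimes_k\pis(A)$ (needed for the Hopf structure) and the stabilisation of the filtered union at a finite-dimensional \'{e}tale algebra (needed to realize $\pis(G)$ as a genuine $\s$-algebraic group rather than a $\s$-pro-algebraic one). Both are purely \ks-algebraic statements about {\ssetale} algebras whose proofs I would extract from the apparatus of \cite{TomasicWibmer:Babbit}; once these are in hand, the Hopf-theoretic and categorical book-keeping to assemble $\pis(G)$ and verify the universal property is routine.
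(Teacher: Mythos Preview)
Your approach is correct and essentially the same as the paper's: both define $\pis(G)$ via the \ks-subalgebra $\pis(k\{G\})$ and then check Hopf structure, {\ssetale}-ness, and the universal property. The paper compresses your Hopf-structure and stabilisation arguments into direct citations---Theorem~3.2 of \cite{TomasicWibmer:Babbit} for the former and Theorem~4.5 of \cite{Wibmer:FinitenessProperties} (every \ks-Hopf subalgebra of a finitely $\s$-generated \ks-Hopf algebra is itself finitely $\s$-generated) for the latter---so the two ``main obstacles'' you flag are exactly the external inputs the paper invokes.
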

\begin{proof}
	A \ks-Hopf algebra (\cite[Def. 2.2]{Wibmer:FinitenessProperties}) is a Hopf algebra over $k$ that has the structure of a \ks-algebra such that the Hopf algebra structure maps are morphisms of \ks-algebras. It follows directly from the definitions that the category of $\s$-algebraic groups is equivalent to the category of \ks-Hopf algebras that are finitely $\s$-generated as \ks-algebra (\cite[Rem.~ 2.3]{Wibmer:FinitenessProperties}).
	
	By Theorem 3.2 of \cite{TomasicWibmer:Babbit} the \ks-subalgebra $\pis(k\{G\})$ of $k\{G\}$ is a \ks-Hopf-subalgebra and by Theorem 4.5 of \cite{Wibmer:FinitenessProperties} every \ks-Hopf-subalgebra of a finitely $\s$-generated \ks-Hopf algebra is finitely $\s$-generated. It follows that the $\s$-algebraic group $\pis(G)$ represented by $\pis(k\{G\})$ is {\ssetale}. Moreover, since a quotient of a {\ssetale} \ks-algebra is {\ssetale} (\cite[Lemma 1.15]{TomasicWibmer:Babbit}) the inclusion $\pis(k\{G\})\subseteq k\{G\}$ has the following property: If $k\{H\}$ is a {\ssetale} \ks-Hopfalgebra with a morphism $\psi\colon k\{H\}\to k\{G\}$ then $\psi(k\{H\})\subseteq \pis(k\{G\})$. Geometrically, this translates to the required universal property. 
\end{proof}	

\begin{Def}
	Let $G$ be a $\s$-algebraic group. The {\ssetale} $\s$-algebraic group $\pis(G)$ from Proposition \ref{prop: existense of pis} is called the group of \emph{$\s$-connected components} of $G$. If $\pis(G)$ is the trivial group, then $G$ is \emph{$\s$-connected}.
\end{Def}

\begin{rem} \label{rem: sconnected}
	The terminology ``$\s$-connected'' is justified by the following fact (\cite[Lemma 4.2.35]{Wibmer:Habil}): A $\s$-algebraic group $G$ is $\s$-connected if and only if $\Spec(k\{G\})$ is connected with respect to the $\s$-topology. The closed sets of the $\s$-topology on $\Spec(k\{G\})$ are the invariant (Zariski) closed sets. 
\end{rem}

Example \ref{ex: ssetale} gives an example of $\s$-algebraic group that is not $\s$-connected. 
Any $\s$-algebraic group $G$ such that $k\{G\}$ is an integral domain is $\s$-connected because in that case $\Spec(k\{G\})$ is connected and so a fortiori $\s$-connected (Remark \ref{rem: sconnected}). One can also show that any (even non-connected) algebraic group is $\s$-connected when considered as a $\s$-algebraic group (\cite[Prop. 4.2.43]{Wibmer:Habil}).

\begin{ex}
	Let $G$ be the $\s$-closed subgroup of $\GL_2$ such that for a \ks-algebra $S$ and $g=\begin{pmatrix} a & b \\ c & d\end{pmatrix}\in \GL_2(S)$ we have $g\in G(S)$ if and only if
	\begin{align*}
	& c\s(a)+d\s(b)=0, \\ & a\s(c)+b\s(d)=0, \\ & a\s(a)+b\s(b)=c\s(c)+d\s(d), \\
	& (a\s(a)+b\s(b))^2=1, \\ & \s(a)\s^2(a)+\s(b)\s^2(b)=a\s(a)+b\s(b).
	\end{align*}
	To see that these equations indeed define a subgroup, note the following alternative description of $G$. Let $D$ be the $\s$-closed subgroup of $\GL_2$ defined by 
	$$D(S)=\left\{\begin{pmatrix} a & 0 \\ 0 & a\end{pmatrix}\in\GL_2(S) \ \Big| \ a^2=1, \ \s(a)=a\right\}$$
	for any \ks-algebra $S$. Then $G(S)=\{g\in\GL_2(S)\ | \ \s(g)g^T\in D(S)\}$. This is a subgroup of $\GL_2(S)$ because $D(S)$ lies in the center of $\GL_2(S)$.
	
	We claim that
	$\f\colon G\to D,\ g\mapsto \s(g)g^T$ satisfies the universal property of Proposition~\ref{prop: existense of pis}. In particular, $\pis(G)$ is isomorphic to the $\s$-algebraic group defined in Example \ref{ex: ssetale}.
	The easiest way to see this is to use some results from \cite{Wibmer:Habil}. In \cite[Prop. 4.2.41]{Wibmer:Habil} it is shown that if $\f\colon G\to D$ is a morphism of $\s$-algebraic groups such that 
	the dual map $\f^*\colon k\{D\}\to k\{G\}$ is injective, $D$ is {\ssetale} and $\ker(\f)$ is $\s$-connected, then $\f$ satisfies the universal property.
	
	We first note that $\f$ is a morphism of $\s$-algebraic groups because $D$ lies in the center of $\GL_2$ and that $D$ is {\ssetale} because it is isomorphic to the group in Example \ref{ex: ssetale}. To see that $\f^*$ is is injective, note that $\f^*$ sends the image of $x$ in $k[x]/(x^2-1)=k\times k=k\{D\}$ to the image of $a\s(a)+b\s(b)$ in $k\{G\}$. Thus if $\f^*$ was not injective, the latter image would need to equal $1$ or $-1$. However, neither is the case. For example, for the identity matrix $a\s(a)+b\s(b)$ evaluates to $1$, but for the matrix $\begin{pmatrix}(1,-1) & 0 \\ 0 & (1,-1) \end{pmatrix}\in G(S)$, where $S=k\times k$ with $\s(\lambda,\mu)=(\sigma(\mu),\sigma(\lambda))$, the expression $a\s(a)+b\s(b)$ evaluates to $-1$.
	Finally, the kernel $N$ of $\f$ is the $\s$-closed subgroup of $\GL_2$ defined by the equation $\s(g)g^T=I_2$. Since this equation can be rewritten as $\s(g)=(g^T)^{-1}$, we see that $k\{N\}$ is isomorphic to  $k[\GL_2]$. In particular, $k\{N\}$ is an integral domain. Thus $N$ is $\s$-connected.
\end{ex}

We will need two lemmas from \cite{TomasicWibmer:Babbit}.


\begin{lem}[{\cite[Lemma 1.25]{TomasicWibmer:Babbit}}] \label{lemma: pis and tensor product}
	Let $R$ and $S$ be \ks-algebras. Then $\pis(R\otimes_k S)=\pis(R)\otimes_k\pis(S)$.
\end{lem}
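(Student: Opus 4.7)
The plan is to establish the two inclusions separately.

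For the inclusion $\pis(R) \otimes_k \pis(S) \subseteq \pis(R \otimes_k S)$: By definition $\pis(R) = \bigcup_\alpha A_\alpha$ is a filtered union of {\ssetale} \ks-subalgebras of $R$ (directed by inclusion, since the compositum of two such subalgebras in $R$ is again {\ssetale}), and similarly $\pis(S) = \bigcup_\beta B_\beta$, so $\pis(R)\otimes_k\pis(S) = \bigcup_{\alpha,\beta} A_\alpha\otimes_k B_\beta$. It therefore suffices to show that for any {\ssetale} \ks-subalgebras $A\subseteq R$ and $B\subseteq S$, the image of $A\otimes_k B\to R\otimes_k S$ is a {\ssetale} \ks-subalgebra. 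Injectivity of this map follows from flatness (in fact freeness over $k$) of $A$ and $B$, while étaleness is preserved by tensor products by standard commutative algebra. For $\s$-separability, given any $\s$-field extension $k'/k$ I would use the canonical identification
\[
(A\otimes_k B)\otimes_k k' \;\cong\; (A\otimes_k k')\otimes_{k'}(B\otimes_k k'),
\]
under which $\s$ corresponds to $\s_{A\otimes_k k'}\otimes_{k'}\s_{B\otimes_k k'}$. Each factor is injective by $\s$-separability of $A$ and $B$, and the tensor product of two $k'$-linear injections between flat $k'$-modules is again injective, by writing it as the composition $(\s_{A\otimes_k k'}\otimes_{k'}\mathrm{id})\circ(\mathrm{id}\otimes_{k'}\s_{B\otimes_k k'})$ and using that étale $k'$-algebras are flat.

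For the reverse inclusion $\pis(R\otimes_k S)\subseteq \pis(R)\otimes_k \pis(S)$, which is the substantive direction: by the same filtered union reduction, it suffices to show that every finite-dimensional {\ssetale} \ks-subalgebra $E\subseteq R\otimes_k S$ is contained in $\pis(R)\otimes_k \pis(S)$. My plan is to construct {\ssetale} \ks-subalgebras $A\subseteq R$ and $B\subseteq S$ with $E\subseteq A\otimes_k B$ inside $R\otimes_k S$; the conclusion then follows from $A\subseteq\pis(R)$ and $B\subseteq\pis(S)$. To extract $A$ and $B$, I would pass to a finite Galois extension $k^*/k$ splitting each of the finite separable field extensions $L_i/k$ appearing in the decomposition $E\cong\prod_{i=1}^n L_i$, so that $E\otimes_k k^*\cong (k^*)^N$ has a transparent system of primitive idempotents inside $(R\otimes_k k^*)\otimes_{k^*}(S\otimes_k k^*)$. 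Each such idempotent is a finite sum of elementary tensors whose $R$- and $S$-factors, for a judicious choice of basis, generate {\ssetale} $k^*$-$\s$-subalgebras. A Galois descent under the compatible action of $\mathrm{Gal}(k^*/k)$ then yields the required $A$ and $B$ over $k$. Here one also uses that {\ssetale}ness passes to \ks-subalgebras: in characteristic zero subalgebras of semisimple finite $k$-algebras are semisimple (hence étale), and $\s$-separability descends because for a $\s$-subalgebra $A'\subseteq A$ and any $\s$-field extension $k'/k$ the map $A'\otimes_k k' \to A\otimes_k k'$ is injective by flatness of $k'/k$.

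The main obstacle is controlling the interaction between the endomorphism $\s$ and the Galois descent. The action of $\s$ on the primitive idempotents of $E\otimes_k k^*$ is in general non-trivial; the {\ssetale} hypothesis forces $\s$ to \emph{permute} these idempotents (rather than collapse any of them), and this permutation must be tracked in a way compatible with the $\mathrm{Gal}(k^*/k)$-action used for descent. Moreover, since $\s$-separability is a condition over \emph{all} $\s$-field extensions of $k$, one must verify that the descended subalgebras $A\subseteq R$ and $B\subseteq S$ satisfy this full condition rather than only $\s$-separability over $k^*$. Organising this bookkeeping — particularly the compatibility of the $\s$-permutation of idempotents with the Galois action and with arbitrary further $\s$-field extensions — is the technical heart of the argument.
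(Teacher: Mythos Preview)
The paper does not prove this lemma; it is quoted without proof from \cite[Lemma~1.25]{TomasicWibmer:Babbit}, so there is no in-paper argument to compare against. I therefore comment only on the internal soundness of your outline.

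Your argument for the inclusion $\pis(R)\otimes_k\pis(S)\subseteq\pis(R\otimes_k S)$ is correct.

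For the reverse inclusion the overall shape is right, but there is a real gap. The sentence ``each such idempotent is a finite sum of elementary tensors whose $R$- and $S$-factors, for a judicious choice of basis, generate {\ssetale} $k^*$-$\s$-subalgebras'' conceals two separate non-trivial facts, neither of which is a matter of basis choice. The first is the purely classical statement that every idempotent of a tensor product $R'\otimes_{k'}S'$ already lies in $\pi_0(R')\otimes_{k'}\pi_0(S')$; over an algebraically closed base this follows, after reducing to finitely generated subalgebras, from the fact that a product of connected schemes of finite type over an algebraically closed field is connected, but you do not supply this and it is the real input. The second is that the finite \'etale subalgebras of the two factors can be taken $\s$-stable with injective $\s$: since $\s$ may move the individual idempotents appearing in a decomposition of the $e_i$ while fixing $E$ globally, this needs an argument (for example, over algebraically closed $k$ one shows that the \emph{minimal} \'etale $A\subseteq R$ with $E\subseteq A\otimes_k S$ satisfies $A\subseteq\s(A)$ by minimality and $\s(E)=E$, whence $A=\s(A)$ by dimension). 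You identify the $\s$-bookkeeping and Galois descent as the technical heart, but these are secondary once the two points above are in hand; the descent can in fact be eliminated entirely by invoking the companion base-change result $\pis(R\otimes_k k'/k')=\pis(R/k)\otimes_k k'$ (the very next lemma quoted in the paper) to reduce to $k$ algebraically closed from the outset.
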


\begin{lem}[{\cite[Lemma 1.24]{TomasicWibmer:Babbit}}] \label{lemma: pis and base change}
	Let $R$ be a \ks-algebra and $k'/k$ an extension of $\s$-fields. Then $\pis(R\otimes_k k'/k')=\pis(R/k)\otimes_k k'$.	
\end{lem}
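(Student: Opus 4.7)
The plan is to prove the identity $\pis(R\otimes_k k'/k') = \pis(R/k)\otimes_k k'$ by establishing the two inclusions separately; both sides are unions of {\ssetale} subalgebras, so each inclusion can be checked one strongly $\s$-\'etale piece at a time.

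For the inclusion $\pis(R/k)\otimes_k k' \subseteq \pis(R\otimes_k k'/k')$, it suffices to show that whenever $A$ is a {\ssetale} \ks-subalgebra of $R$, the base change $A\otimes_k k'$ is a {\ssetale} $k'$-$\s$-subalgebra of $R\otimes_k k'$. Flatness of $k'/k$ ensures that $A\otimes_k k'$ embeds as a $k'$-$\s$-subalgebra of $R\otimes_k k'$, and \'etaleness over $k'$ is preserved under base change of the base ring. For $\s$-separability over $k'$: given any $\s$-field extension $k''/k'$, the canonical isomorphism $(A\otimes_k k')\otimes_{k'} k''\cong A\otimes_k k''$, together with the fact that $k''/k$ is itself a $\s$-field extension, reduces injectivity of $\s$ on the left to injectivity of $\s$ on the right, which holds by the $\s$-separability of $A$ over $k$. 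Taking unions over all such $A$ yields the inclusion.

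The reverse inclusion $\pis(R\otimes_k k'/k')\subseteq \pis(R/k)\otimes_k k'$ is the main obstacle and requires a descent argument. I would first invoke the non-$\s$ analogue: if $\pi_0(R/k)$ denotes the union of all \'etale $k$-subalgebras of $R$, then $\pi_0(R/k)\otimes_k k' = \pi_0(R\otimes_k k'/k')$, a classical consequence of faithfully flat descent for \'etale algebras. This places both $\pis(R/k)\otimes_k k'$ and $\pis(R\otimes_k k'/k')$ inside the common ambient $A\otimes_k k'$, where $A := \pi_0(R/k)$. The problem then reduces to an intrinsic statement about the \'etale $k$-$\s$-algebra $A$: every $\s$-separable $k'$-$\s$-subalgebra of $A\otimes_k k'$ is contained in $A^{\s\text{-sep}}\otimes_k k'$, where $A^{\s\text{-sep}}$ denotes the largest $\s$-separable \ks-subalgebra of $A$.

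I expect the technical heart of the proof to lie in this final reduction. An \'etale $k$-algebra corresponds via Grothendieck's Galois theory to a profinite set with continuous action of the absolute Galois group of $k$, and one can rephrase $\s$-separability as a surjectivity condition on the induced $\s$-action on this profinite set. Base change from $k$ to $k'$ refines the Galois-theoretic data in a controlled way, so the $\s$-separable parts on both sides correspond under this refinement. Combined with faithful flatness of $k'/k$, which allows one to transfer the injectivity criterion for $\s$ from a $\s$-separable $k'$-$\s$-subalgebra of $A\otimes_k k'$ back to a candidate $k$-$\s$-subalgebra of $A$, this should complete the argument and yield the lemma.
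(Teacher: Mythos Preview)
The paper does not prove this lemma; it is imported verbatim as \cite[Lemma~1.24]{TomasicWibmer:Babbit} with no argument supplied here, so there is no in-paper proof to compare your approach against.

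On the proposal itself: the forward inclusion is correct and cleanly argued. For the reverse inclusion, your overall strategy---reduce via the classical identity $\pi_0(R/k)\otimes_k k'=\pi_0(R\otimes_k k'/k')$ and then isolate the $\s$-separable part inside an \'etale $k$-$\s$-algebra---is sound, but your last paragraph is not yet a proof. You say you ``expect'' a Galois-theoretic rephrasing together with faithful flatness to finish things; that is precisely the content of the lemma, and you have left it as a heuristic. Concretely, the characterisation of $\s$-separability of a finite \'etale $k$-$\s$-algebra $A$ as surjectivity of the induced self-map on the finite set of connected components of $\Spec(A)$ is correct, and once stated, compatibility with base change follows because the natural map from the components of $\Spec(A\otimes_k k')$ to those of $\Spec(A)$ is surjective. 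But you still need to write this out, check that $\s$-separability of a $k'$-$\s$-subalgebra of $A\otimes_k k'$ forces it to sit over the $\s$-separable part of $A$, and take some care passing from finite \'etale pieces to the (possibly infinite) union $\pi_0(R/k)$. As written, the reverse inclusion is an outline rather than an argument.
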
	

We now return to the $\s$-Picard-Vessiot theory. As before, $F$ is a $\ds$-field of characteristic zero and $k$ is the $\s$-field of $\de$-constants of $F$.

%

\begin{prop} \label{prop: piss correspond}
	Let $R/F$ be a $\s$-Picard-Vessiot ring with $\s$-Galois group $G$. Then $\pis(R/F)$ is a $\s$-Picard-Vessiot ring over $F$ with $\s$-Galois group isomorphic to $\pis(G)$. 
\end{prop}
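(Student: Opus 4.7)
The plan is to apply $\pis$ to the canonical torsor isomorphism \eqref{eqn: alg torsor isom} for $R/F$ and identify the resulting object as the torsor isomorphism for $P := \pis(R/F)$ with $\s$-Galois group $\pis(G)$.

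First I would verify that $P$ is a $\ds$-subalgebra of $R$ with $P^\de = k$. Every $\s$-\'{e}tale $F$-$\s$-subalgebra $S \subseteq R$ is \'{e}tale over $F$, hence a finite product of finite separable field extensions of $F$; differentiating the minimal polynomial of any $\alpha \in S$ yields $\de(\alpha) \in F[\alpha] \subseteq S$, so each such $S$ is automatically $\de$-stable, and hence so is their union $P$. The equality $P^\de = k$ follows from $k \subseteq F \subseteq P$ and $R^\de = k$.

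Next, I would apply $\pis$ relative to $R$ to both sides of the torsor isomorphism $R \otimes_F R \simeq R \otimes_k k\{G\}$. Using a base-change property of $\pis$ along $F \hookrightarrow R$ (extending Lemma \ref{lemma: pis and base change}), the left-hand side becomes $R \otimes_F P$. For the right-hand side, writing $R \otimes_k k\{G\} = R \otimes_F (F \otimes_k k\{G\})$ and applying Lemma \ref{lemma: pis and base change} from $k$ to $F$ (which gives $\pis(F \otimes_k k\{G\}/F) = F \otimes_k k\{\pis(G)\}$) identifies the right-hand side as $R \otimes_k k\{\pis(G)\}$. This yields a canonical $R$-$\s$-algebra isomorphism
$$R \otimes_F P \;\simeq\; R \otimes_k k\{\pis(G)\}.$$

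Using faithfully-flat descent along $F \hookrightarrow R$ (which holds for $\s$-Picard-Vessiot rings via the torsor structure), one transfers finite $\s$-generation of $k\{\pis(G)\}$ over $k$ to finite $\s$-generation of $P$ over $F$, and transfers $\de$-simplicity of $R$ to $\de$-simplicity of $P$. A Tannakian argument applied to the $\pis(G)$-torsor structure on $P$ exhibited by the displayed isomorphism (along the lines of Proposition \ref{prop: Tannaka}, or directly via \cite[Lemma 4.4]{BachmayrWibmer:TorsorsForDifferenceAlgebraicGroups}) then produces a fundamental solution matrix $Y \in \GL_m(P)$ for some equation $\de(y) = Ay$ with $A \in F^{m \times m}$. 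By Lemma \ref{lemma: crit}, $P$ is a $\s$-Picard-Vessiot ring, and the displayed isomorphism is by construction the torsor isomorphism identifying its $\s$-Galois group with $\pis(G)$.

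The main obstacle will be the base-change of $\pis$ from $F$ to $R$, since Lemma \ref{lemma: pis and base change} is stated only for $\s$-field extensions. A clean way around is to first apply the lemma to the $\s$-field extension $F \subseteq E := \Frac(R)$ and then descend back to $R$, using that $R$ is an $F$-$\s$-subalgebra of $E$ with fraction field $E$. A secondary subtlety is showing that the fundamental solution matrix produced by the Tannakian argument actually has entries in $P$; this should follow from the universal property of $P$ combined with the $\s$-\'{e}taleness of $k\{\pis(G)\}$, which forces the coordinate ring of the relevant torsor to land in a $\s$-\'{e}tale $F$-$\s$-subalgebra of $R$.
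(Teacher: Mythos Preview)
Your overall idea---apply $\pis$ to the torsor isomorphism \eqref{eqn: alg torsor isom}---is the paper's idea too, but you apply $\pis(-/R)$ where the paper applies $\pis(-/F)$, and that single change is what creates the obstacle you flag. The paper never needs base change along $F\hookrightarrow R$: combining the tensor formula $\pis(A\otimes_F B/F)=\pis(A/F)\otimes_F\pis(B/F)$ (Lemma~\ref{lemma: pis and tensor product}) with Lemma~\ref{lemma: pis and base change} (only for the $\s$-field extension $k\subseteq F$), applying $\pis(-/F)$ to both sides of \eqref{eqn: alg torsor isom} gives directly
\[
P\otimes_F P \;\simeq\; P\otimes_k k\{\pis(G)\},
\]
which is already the torsor identity for $P$ over $F$. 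No descent from $R$ or $E$ is needed, and this is the step that makes the argument clean.

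Two places where your sketch has genuine gaps. First, ``faithfully flat descent transfers $\de$-simplicity of $R$ to $P$'' is not a valid argument: $\de$-simplicity neither descends nor passes to subalgebras. The paper instead shows $P$ is a \emph{finite field extension} of $F$ (an \'etale subalgebra of the integral domain $R$ is a field; finiteness comes from an intermediate-$\s$-field finite-$\s$-generation argument), which makes $\de$-simplicity automatic. Second, your Tannakian step presupposes a $\pis(G)_F$-torsor structure on $\Spec(P)$, but your displayed isomorphism $R\otimes_F P\simeq R\otimes_k k\{\pis(G)\}$ only gives this after base change to $R$; you would still need to descend the torsor, which you have not done. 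With the $P\otimes_F P$ identity in hand, the paper bypasses this: it recognises $P/F$ as a (classical) Picard-Vessiot extension via the Hopf-algebraic criterion of Amano--Masuoka--Takeuchi (the left $P$-module $P\otimes_F P$ is generated by its $\de$-constants), and since $P/F$ is algebraic the Picard-Vessiot ring and extension coincide, so $P$ is a $\s$-Picard-Vessiot ring with $\s$-Galois group read off from $(P\otimes_F P)^\de=k\{\pis(G)\}$.
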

\begin{proof}	
	We will first show that $\pis(R/F)$ is a finite field extension of $F$. Note that if an \'{e}tale algebra (over a field) is an integral domain, then it is a field. Since $R$ is an integral domain and $\pis(R/F)$ is a union of \'{e}tale algebras contained in $R$ it follows that $\pis(R/F)$ is a field. So $\pis(R/F)$ is an algebraic field extension of $F$ and a $\s$-subfield of the field of fractions $E$ of $R$. Since $E$ is finitely $\s$-generated as a $\s$-field extension of $F$ and any intermediate $\s$-field of a finitely $\s$-generated $\s$-field extension is itself finitely $\s$-generated (\cite[Theorem 4.4.1]{Levin:difference}), it follows that $\pis(R/F)$ is finitely $\s$-generated over $F$. Since $\pis(R/F)$ is a union of finite $\s$-field extensions of $F$, we see that indeed $\pis(R/F)$ is a finite $\s$-field extension of $F$.
	
	To see that $\pis(R/F)$ is stable under the derivation $\de\colon R\to R$, let $a\in\pis(R/F)$ have minimal polynomial $f$ over $F$. Then $0=\de(f(a))=f^{\de}(a)+f'(a)\de(a)$, where $f^\de$ is the polynomial obtained from $f$ by applying $\de$ to the coefficients of $f$. Since $f'(a)\neq 0$, it follows that $\de(a)\in\pis(R/F)$. So $\pis(R/F)$ is an $F$-$\ds$-algebra. 
	
	We next apply $\pis(-/F)$ to the identity $R\otimes_F R=R\otimes_k k\{G\}=R\otimes_F(F\otimes_k k\{G\})$ from equation (\ref{eqn: alg torsor isom}). Using Lemmas \ref{lemma: pis and tensor product} and \ref{lemma: pis and base change} we find
	\begin{align*}\pis(R/F)\otimes_F\pis(R/F) & =\pis(R\otimes_F R/F)=\pis(R/F)\otimes_F\pis(F\otimes_k k\{G\}/F)=\\
	& =\pis(R/F)\otimes_F(F\otimes_k(\pis(k\{G\}/k))=\pis(R/F)\otimes_k\pis(k\{G\}/k)= \\
	&= \pis(R/F)\otimes_k k\{\pis(G)\}.
	\end{align*}
	Since $\pis(k\{G\}/k)\subseteq k\{G\}=(R\otimes_F R)^\de$, we see that
	$\pis(R/F)\otimes_F\pis(R/F)$ is generated by $(\pis(R/F)\otimes_F\pis(R/F))^\de$ as a left $\pis(R/F)$-module. Moreover, $\pis(R/F)^\de=F^\de$. Thus the extension $\pis(R/F)/F$ of differential fields is a Picard-Vessiot extension in the sense of \cite[Def. 1.8]{AmanoMasuokaTakeuchi:HopfPVtheory}. Since this definition is equivalent to the standard one (\cite[Theorem 3.11]{AmanoMasuokaTakeuchi:HopfPVtheory}) it follows that $\pis(R/F)$ is a Picard-Vessiot extension for some linear differential equation $\de(y)=Ay$, with $A\in F^{n\times n}$. Since $\pis(R/F)$ is an algebraic extension, the Picard-Vessiot ring and the Picard-Vessiot extension coincide. So the $F$-$\de$-algebra $\pis(R/F)$ is a Picard-Vessiot ring for $\de(y)=Ay$. It is then clear that the $F$-$\ds$-algebra $\pis(R/F)$ is a $\s$-Picard-Vessiot ring for $\de(y)=Ay$.  
	
	Since $(\pis(R/F)\otimes_F \pis(R/F))^\de=\pis(k\{G\}/k)$, we see that the $\s$-Galois group $\pis(R/F)$ is isomorphic to $\pis(G)$.	
	%
\end{proof}

\begin{cor} \label{cor: core trivial}
	Let $F$ be a $\ds$-field such that $F$ does not have any non-trivial finite difference field extensions. Then every $\s$-Galois group over $F$ is $\s$-connected.	
\end{cor}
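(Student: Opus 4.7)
The plan is to reduce the statement directly to Proposition \ref{prop: piss correspond}. Let $G$ be a $\s$-Galois group over $F$, realized as the $\s$-Galois group of some $\s$-Picard-Vessiot ring $R/F$. By Proposition \ref{prop: piss correspond}, the $F$-$\ds$-subalgebra $\pis(R/F)\subseteq R$ is itself a $\s$-Picard-Vessiot ring over $F$, whose $\s$-Galois group is isomorphic to $\pis(G)$. Moreover, the proof of that proposition establishes as an intermediate step that $\pis(R/F)$ is a \emph{finite} $\s$-field extension of $F$.

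Now I would invoke the hypothesis that $F$ admits no non-trivial finite difference field extensions: it forces $\pis(R/F)=F$. The last step is to conclude that a $\s$-Picard-Vessiot ring that equals $F$ has trivial $\s$-Galois group; this is immediate from the definition, since for any $k$-$\s$-algebra $S$, the group $\Aut^{\ds}(F\otimes_k S/F\otimes_k S)$ is trivial. Hence $\pis(G)$ is the trivial $\s$-algebraic group, which by definition means that $G$ is $\s$-connected.

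There is no real obstacle here; the nontrivial work has already been packaged into Proposition \ref{prop: piss correspond} (which provides both the identification of $\s$-Galois groups under the functor $\pis$ and the algebraicity/finiteness of $\pis(R/F)$ over $F$) and into the universal property defining $\pis(G)$. The corollary is simply the observation that the hypothesis on $F$ collapses the finite $\s$-field extension $\pis(R/F)$ down to $F$ itself.
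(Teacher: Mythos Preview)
Your argument is correct and follows exactly the same line as the paper's proof: use that $\pis(R/F)$ is a finite $\s$-field extension of $F$ (established in the proof of Proposition~\ref{prop: piss correspond}), invoke the hypothesis to force $\pis(R/F)=F$, and conclude that its $\s$-Galois group $\pis(G)$ is trivial. There is nothing to add.
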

\begin{proof}
	Let $R$ be a $\s$-Picard-Vessiot ring over $F$ with $\s$-Galois group $G$. As shown in the first paragraph of the proof of Proposition \ref{prop: piss correspond}, the $F$-$\s$-algebra $\pis(R/F)$ is a finite $\s$-field extension of $F$. Thus, by assumption, it is trivial and consequently also its $\s$-Galois group $\pi_0^\s(G)$ is trivial.
\end{proof}

Following \cite{DiVizioHardouinWibmer:DifferenceGaloisTheoryOfLinearDifferentialEquations}, a $\s$-algebraic group $G$ is \emph{$\s$-reduced} if $\s\colon k\{G\}\to k\{G\}$ is injective. 
The group in Example \ref{ex: ssetale} is $\s$-reduced. An example of a $\s$-algebraic group $G$ that is not $\s$-reduced would be $G=\{g\in \GL_n \ | \ \s(g)=I_n\}$.

\begin{proof}[Proof of Theorem \ref{theo: sgalois groups are sreduced and sconnected}] Let $R$ be a $\s$-Picard-Vessiot ring over $\C(x)$ with $\s$-Galois group $G$. In \cite[Cor. 4.4]{DiVizioHardouinWibmer:DifferenceGaloisTheoryOfLinearDifferentialEquations} it is shown that a $\s$-Galois group over a $\ds$-field $F$ such that $\s\colon F\to F$ is an automorphism is $\s$-reduced. Since $\s\colon \C(x)\to \C(x),\ f(x)\mapsto f(x+1)$ is an automorphism, it follows that $G$ is $\s$-reduced.
	
%
 The difference field $\C(x)$ (with $\s(f(x))=f(x+1)$) does not have any non-trivial finite difference field extensions (see the proof of Theorem XIX in Chapter 9 of \cite{Cohn:difference}). It thus follows from Corollary \ref{cor: core trivial} that $G$ is $\s$-connected.
\end{proof}

\subsection{Unipotent groups}
In this section, we show that being $\s$-reduced and $\s$-connected is far from sufficient for being a $\s$-Galois group over $\C(x)$. In fact, we show that no proper non-trivial $\s$-closed subgroup of $\Ga$ is a $\s$-Galois group over $\C(x)$. As a consequence of this result we also deduce that the constant subgroups of unipotent linear algebraic groups do not occur as $\s$-Galois groups over $\C(x)$.

\begin{lem}\label{lemma: additive}
Let $F$ be a $\ds$-field of characteristic zero with field of constants $k=F^\delta$. Let $G$ be a $\s$-closed subgroup of the additive group $\Ga$ over $k$ and let $R/F$ be a $\s$-Picard-Vessiot ring with $\s$-Galois group $G$. Then there exists an element $y \in R$ with $R=F\{y\}$ and $\delta(y)\in F$. 
\end{lem}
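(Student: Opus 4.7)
The plan is to use Proposition \ref{prop: Tannaka} to exhibit $R$ as a $\s$-Picard-Vessiot ring for an explicit equation with fundamental solution matrix inside $\Ga$, and then reduce to a one-parameter triangular form by a change of basis with coefficients in $F$.

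Concretely, view $\Ga$ as embedded in $\GL_2$ via $a\mapsto \begin{pmatrix} 1 & a \\ 0 & 1 \end{pmatrix}$, so that $G$ becomes a $\s$-closed subgroup of $\GL_2$. By Proposition~\ref{prop: Tannaka}, there exist $A\in F^{2\times 2}$ and $Y=\begin{pmatrix} y_{11} & y_{12} \\ y_{21} & y_{22}\end{pmatrix}\in \GL_2(R)$ such that $\de(Y)=AY$, $R=F\{Y,1/\det(Y)\}$ and $\Gal_Y(R/F)=G$ as a $\s$-closed subgroup of $\GL_2$ lying inside this copy of $\Ga$. For any \ks-algebra $S$ and $g\in G(S)$ we then have $g(Y)=Y\phi_S(g)$ with $\phi_S(g)=\begin{pmatrix} 1 & c_g \\ 0 & 1\end{pmatrix}$ for some $c_g\in S$. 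Reading off the first column gives $g(y_{11})=y_{11}$ and $g(y_{21})=y_{21}$, and since $\det(\phi_S(g))=1$ we also get $g(\det(Y))=\det(Y)$. By the $\s$-Galois correspondence (Theorem~\ref{theo: Galois correspondence}), applied to $E=\Frac(R)$, this forces $y_{11},y_{21},\det(Y)\in E^G=F$.

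Since $\det(Y)\neq 0$, at least one of $y_{11},y_{21}\in F$ is nonzero; after possibly multiplying $Y$ on the left by the permutation matrix $\begin{pmatrix} 0 & 1 \\ 1 & 0\end{pmatrix}\in \GL_2(F)$ (which by Lemma~\ref{lemma: compositum}(i) preserves the $\s$-Galois group as a subgroup of $\GL_2$, and hence still lands in $\Ga$ after a corresponding adjustment), I may assume $y_{11}\neq 0$. Set
\[
B=\begin{pmatrix} 1 & 0 \\ 0 & y_{11}/\det(Y) \end{pmatrix}\begin{pmatrix} 1/y_{11} & 0 \\ -y_{21}/y_{11} & 1 \end{pmatrix}\in \GL_2(F),
\]
and let $\tilde Y=BY$. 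A direct computation shows $\tilde Y=\begin{pmatrix} 1 & y \\ 0 & 1\end{pmatrix}$ with $y=y_{12}/y_{11}\in R$. By Lemma~\ref{lemma: compositum}(i) again, $R$ is a $\s$-Picard-Vessiot ring for the new equation with fundamental solution matrix $\tilde Y$, and $\Gal_{\tilde Y}(R/F)=G$.

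Now $R=F\{\tilde Y,1/\det(\tilde Y)\}=F\{y\}$, because $\det(\tilde Y)=1$. Moreover, from $\de(\tilde Y)=\tilde A\tilde Y$ with $\tilde A\in F^{2\times 2}$ we read $\tilde A=\begin{pmatrix} 0 & \de(y) \\ 0 & 0\end{pmatrix}$, so $\de(y)\in F$. This proves the lemma. The only potential obstacle is keeping track of whether $y_{11}$ or $y_{21}$ is the nonzero entry needed to write down $B$, but the $\det(Y)\neq 0$ condition together with a row swap by an element of $\GL_2(F)$ handles both cases uniformly.
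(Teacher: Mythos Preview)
Your argument is correct, but it takes a genuinely different route from the paper's proof. The paper invokes the explicit classification of $G_F$-torsors for $\s$-closed subgroups of $\Ga$ from \cite{BachmayrWibmer:TorsorsForDifferenceAlgebraicGroups}: either $G=\Ga$ (so the torsor is trivial) or $G$ is cut out by a linear $\s$-operator $\mathcal{L}$ and the torsor is isomorphic to $\{\mathcal{L}(y)=a\}$ for some $a\in F$; in either case $X$ embeds in $\mathbb{A}^1$, the action is by translation, and $\de(y)$ is $G$-invariant. You instead use only Proposition~\ref{prop: Tannaka} (hence only the general torsor-linearization \cite[Lemma~4.4]{BachmayrWibmer:TorsorsForDifferenceAlgebraicGroups}) to obtain a fundamental solution matrix $Y\in\GL_2(R)$ with $\Gal_Y(R/F)=G\leq\Ga$, then read off from the shape of $\phi_S(g)$ that the first column of $Y$ and $\det(Y)$ are $G$-invariant, hence lie in $F$ by the Galois correspondence, and finally row-reduce over $F$ to the unipotent form $\begin{pmatrix}1&y\\0&1\end{pmatrix}$. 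Your approach has the virtue of avoiding the case split between $G=\Ga$ and proper $G$, and it does not need the explicit description of $\Ga$-torsors; the paper's approach is shorter once that classification is available. One cosmetic point: your parenthetical ``and hence still lands in $\Ga$ after a corresponding adjustment'' is unnecessary, since Lemma~\ref{lemma: compositum}(i) already guarantees $\Gal_{BY}(R/F)=\Gal_Y(R/F)$ as $\s$-closed subgroups of $\GL_2$, so the group stays inside the same copy of $\Ga$ without any adjustment.
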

\begin{proof}
Let $X$ be the $\s$-variety over the $\s$-field $F$ represented by the $F$-$\s$-algebra $R$. As explained in the first paragraph of the proof of Prop \ref{prop: Tannaka}, there is a canonical structure on $X$ as a right $G_F$-torsor and we may thus also consider $X$ as a left $G_F$-torsor and use the results on left $G_F$-torsors in \cite{BachmayrWibmer:TorsorsForDifferenceAlgebraicGroups}. Either $G=\mathbb G_a$ and thus $X$ is the trivial $G$-torsor (\cite[Cor.\ 3.6]{BachmayrWibmer:TorsorsForDifferenceAlgebraicGroups}) or there exists an expression $\mathcal L(y)=\s^n(y)+\lambda_{n-1}\s^{n-1}(y)+\dots+\lambda_1\sigma(y)+\lambda_0y$ with $\lambda_i \in k$ such that $G(S)=\{g\in S\mid \mathcal L(g)=0\}$ for all \ks-algebras $S$ (\cite[Cor.\ A.3]{DiVizioHardouinWibmer:DifferenceAlgebraicRel}). In the latter case, it was shown in Example 5.4 of \cite{BachmayrWibmer:TorsorsForDifferenceAlgebraicGroups} that there exists an $a\in F$ such that $X$ is isomorphic as $G_F$-torsor to the torsor $X_a$ defined as the $\s$-closed $\s$-subvariety of $\mathbb A^1_k$ given by the equation $\mathcal L(y)=a$ with $G$-action given by addition. We conclude that in both cases, $X$ is a $\s$-closed  $\s$-subvariety of $\mathbb A^1_k$, thus there exists an $y\in R$ with $R=F\{X\}=F\{y\}$. Moreover, the $G_F$-action is given by addition, so
$$ g(\de(y))=\de(g(y))=\de(y+g)=\de(y)+\de(g)=\de(y)$$
for every $g\in G(S)$.
 It thus follows from the Galois correspondence that $\delta(y)\in F$.
\end{proof}

Recall that every element of $\C(x)$ has a unique partial fraction decomposition 
$$g(x)+\sum_{j=1}^{r} \frac{\alpha_j}{x+\beta_j}+\sum_{j=1}^{r_2} \frac{\alpha_{2j}}{(x+\beta_{2j})^2}+\dots+\sum_{j=1}^{r_m} \frac{\alpha_{mj}}{(x+\beta_{mj})^m} $$
with $g\in \C[x]$, $m,r,r_2,\dots,r_m \in \N$ and $\alpha_j,\alpha_{ij},\beta_j,\beta_{ij} \in \C$. The term $\sum_{j=1}^{r} \frac{\alpha_j}{x+\beta_j}$ is called the \textit{logarithmic part}. An element in $\C(x)$ has an antiderivative inside $\C(x)$ if and only if its logarithmic part vanishes.
\begin{prop}\label{prop do not occur}
 Let $G$ be a non-trivial proper $\s$-closed subgroup of the additive group $\Ga$ over $\C$. Then $G$ is not a $\s$-Galois group over the $\ds$-field $\C(x)$ with derivation $\de=\frac{d}{dx}$ and endomorphism $\s$ given by $\s(f(x))=f(x+1)$.
\end{prop}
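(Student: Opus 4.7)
The plan is to derive a contradiction from the assumption that a proper non-trivial $\s$-closed subgroup $G \leq \Ga$ is the $\s$-Galois group of some $\s$-Picard-Vessiot ring $R/F$, with $F = \C(x)$. First I would apply Lemma \ref{lemma: additive} together with the explicit torsor description given in its proof: these yield an element $y \in R$ with $R = F\{y\}$, an element $b := \de(y) \in F$, a linear difference operator
$$\mathcal{L}(y) = \s^n(y) + \lambda_{n-1}\s^{n-1}(y) + \dots + \lambda_0 y \quad (\lambda_i \in \C)$$
defining $G$ inside $\Ga$, and an element $a \in F$ with $\mathcal{L}(y) = a$. Non-triviality of $G$ forces $y \notin F$, for otherwise $R = F$. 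In turn, $b$ cannot admit an antiderivative in $F$: any such antiderivative would differ from $y$ by an element of $R^\de = \C \subseteq F$, giving $y \in F$. Consequently, the logarithmic part of the partial fraction decomposition of $b$ over $\C(x)$ is non-zero.

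Next I would show that $\lambda_0 \neq 0$, invoking Theorem \ref{theo: sgalois groups are sreduced and sconnected} to the effect that $G$ is $\s$-reduced. The point is that the coordinate ring $k\{G\} = k\{y\}/[\mathcal{L}(y)]$ is isomorphic as a $k$-algebra to the polynomial algebra $k[\bar y, \s(\bar y), \dots, \s^{n-1}(\bar y)]$, and $\s$ acts on this generating tuple by the companion matrix of $\mathcal{L}$; this linear map is invertible, and hence $\s$ is injective on $k\{G\}$, if and only if $\lambda_0 \neq 0$.

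Now, since $\lambda_i \in \C$ and $\de \circ \s = \s \circ \de$ on $\C(x)$ (so $\hslash = 1$ in our setting), the operator $\mathcal{L}$ commutes with $\de$. Applying $\de$ to $\mathcal{L}(y) = a$ yields $\mathcal{L}(b) = \de(a)$, so $\mathcal{L}(b)$ has vanishing logarithmic part. For the final contradiction, write the logarithmic part of $b$ as $\sum_j \alpha_j/(x - \gamma_j)$ with $\alpha_j \neq 0$ and pairwise distinct $\gamma_j$; then the logarithmic part of $\s^i(b)$ is $\sum_j \alpha_j/(x - (\gamma_j - i))$. Choosing $\gamma^*$ among the $\gamma_j$'s with maximal real part (breaking ties by maximal imaginary part) ensures that $\gamma^* + i$ is not a pole of $b$ for any $i \geq 1$. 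Hence the residue at $\gamma^*$ of the logarithmic part of $\mathcal{L}(b) = \sum_i \lambda_i \s^i(b)$ receives a contribution only from the $\lambda_0 b$ term and equals $\lambda_0 \alpha^* \neq 0$, contradicting the vanishing logarithmic part of $\mathcal{L}(b)$.

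The hard part will be the $\s$-reducedness step: converting the abstract statement that $\s$ acts injectively on $k\{G\}$ into the concrete numerical condition $\lambda_0 \neq 0$ via the companion matrix description of the $\s$-action. The remainder is a pole-chasing computation in $\C(x)$ together with the compatibility of $\de$ with $\s$-linear operators having constant coefficients.
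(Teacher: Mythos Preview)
Your argument is correct, but it diverges from the paper's proof in two places, and one of those divergences introduces an avoidable detour.

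First, the source of the linear relation. You extract $\mathcal{L}(y)=a\in F$ from the torsor description in the proof of Lemma~\ref{lemma: additive}, so your linear operator is literally the defining operator of $G$. The paper instead appeals to Proposition~\ref{prop: Ga}: properness of $G$ forces $y,\s(y),\dots,\s^n(y)$ to be algebraically dependent over $\C(x)$ for some $n$, and Kolchin--Ostrowski then gives a non-zero vector $(c_0,\dots,c_n)\in\C^{n+1}$ with $\sum_l c_l\s^l(y)\in\C(x)$. Both routes produce a $\C$-linear relation among the $\s^l(y)$ modulo $\C(x)$; yours is more structural (the relation is canonical), the paper's is more self-contained (no torsor classification needed).

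Second, the pole-chasing. The paper normalises so that the \emph{top} coefficient $c_n$ is non-zero and isolates the pole of $\s^n(\de(y))$ at the shift of the pole of $\de(y)$ with extremal real part, which cannot cancel against lower-order terms. You instead isolate the unshifted extremal pole $\gamma^*$ and therefore need the \emph{bottom} coefficient $\lambda_0$ to be non-zero, which costs you the invocation of Theorem~\ref{theo: sgalois groups are sreduced and sconnected}. This works, but it is unnecessary: since your $\mathcal{L}$ is monic, you could have run exactly the paper's argument (choosing $\gamma^*$ of \emph{minimal} real part and looking at the pole $\gamma^*-n$) and dispensed with $\s$-reducedness entirely. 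In short, the ``hard part'' you flag is a self-imposed difficulty.
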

\begin{proof}
 Suppose, for a contradiction, that $R/\C(x)$ is a $\s$-Picard-Vessiot ring with $\s$-Galois group $G$. By Lemma \ref{lemma: additive}, there exists an element $y \in R$ with $R=\C(x)\{y\}$ and $\delta(y)\in \C(x)$. Set $a=\delta(y)\in \C(x)$. As $\delta$ and $\sigma$ commute, $\delta(\s^l(y))=\s^l(a)$ for all $l\in \N$. By Proposition~\ref{prop: Ga}, there exists an $n\in \N$ such that $y,\s(y),\s^2(y),\dots,\s^n(y)$ are algebraically dependent over $\C(x)$. By the theorem of Kolchin-Ostrowski (see e.g. Section 2 in \cite{MR240106}), this can only happen if there exists a non-zero vector $(c_0,\dots,c_n) \in \C^{n+1}$ with $\sum_{l=0}^nc_l\s^l(y) \in \C(x)$. We may assume that $c_n\neq 0$. We differentiate and obtain that $\sum_{l=0}^nc_l\s^l(a)$ has no logarithmic part. Let $\sum_{j=1}^{r} \frac{\alpha_j}{x+\beta_j}$ be the logarithmic part of $a$ with pairwise distinct $\beta_1,\dots,\beta_r$ and non-zero elements $\alpha_j$. Then the logarithmic part of $\sum_{l=0}^nc_l\s^l(a)$ equals 
 $$0=\sum_{l=0}^n\sum_{j=1}^{r} \frac{c_l\alpha_j}{x+l+\beta_j}. $$ We claim that the logarithmic part of $a$ is zero, i.e., $\sum_{j=1}^{r} \frac{\alpha_j}{x+\beta_j}$ is an empty sum. Otherwise, we can choose $j_0$ with $1\leq j_0\leq r$ such that $\operatorname{Re}(\beta_{j_0})$ is maximal among all elements $\operatorname{Re}(\beta_j)$. Then the term with denominator $x+n+\beta_{j_0}$ cannot cancel with any other term (here we use that $\beta_1,\dots,\beta_r$ are pairwise distinct) and hence $c_n\alpha_{j_0}=0$, a contradiction. Therefore, the logarithmic part of $a$ vanishes and hence $y\in \C(x)$, $R=\C(x)$ and $G=\{1\}$, contradicting that $G$ is non-trivial.
\end{proof}

We consider expressions of the form $\mathcal L(y)=\s^n(y)+\lambda_{n-1}\s^{n-1}(y)+\dots+\lambda_1\sigma(y)+\lambda_0y$ with $\lambda_i \in \C$ and the corresponding $\s$-closed subgroups $G_{\mathcal L}$ of $\Ga$ with $G_{\mathcal L}(S)=\{g\in S\mid \mathcal L(g)=0\}$ for all $\C$-$\s$-algebras $S$. Every proper $\s$-closed subgroup of the additive group $\Ga$ is isomorphic to such a $G_{\mathcal L}$ (\cite[Cor.\ A.3]{DiVizioHardouinWibmer:DifferenceAlgebraicRel}).

\begin{rem}
 Note that $G_{\mathcal L}$ is $\s$-reduced if and only if $\lambda_0\neq 0$ and it is always $\s$-connected. Indeed, $\sigma\colon \C\{G_{\mathcal L}\} \to \C\{G_{\mathcal L}\}$ is injective if and only if $\lambda_0\neq 0$ and $\C\{G_{\mathcal L}\}$ is an integral domain, thus $G_\mathcal L$ is connected and in particular $\s$-connected. Hence most of the groups in Proposition \ref{prop do not occur} satisfy the necessary conditions given in Theorem \ref{theo: sgalois groups are sreduced and sconnected} but yet do not occur as $\s$-Galois groups.
\end{rem}

We remark that the multiplicative case differs from the additive case. Indeed, the constant points of the multiplicative group $\mathbb G_m$ do occur as $\s$-Galois group over $\C(x)$ (see Example~\ref{ex PVR}), whereas
Proposition \ref{prop do not occur} implies that the constant subgroup of the additive group does not occur as $\s$-Galois group over $\C(x)$.

To generalize this result from $\Ga$ to all unipotent groups, we need some basics about quotients of $\s$-algebraic groups (\cite[A.9]{DiVizioHardouinWibmer:DifferenceGaloisTheoryOfLinearDifferentialEquations} or \cite[Chapter 3]{Wibmer:Habil}): Let $G$ be a $\s$-algebraic group and $N$ a normal $\s$-closed subgroup. The quotient $G/N$ can be defined through the usual universal property. A morphism $\f\colon G\to H$ of $\s$-algebraic groups is a quotient map (i.e., can be identified with the canonical map $G\to G/N$ for some normal $\s$-closed subgroup $N$ of $G$) if and only if the dual map $\f^*\colon k\{H\}\to k\{G\}$ is injective.

\begin{cor}\label{cor: unipotent}
 Let $\G\leq\GL_n$ be a non-trivial unipotent linear algebraic group over $\C$ and let $G$ be the constant subgroup of $\G$, i.e., $G(S)=\{g\in \G(S) \mid \s(g)=g\}$ for all $\C$-$\s$-algebras $S$. Then $G$ is not a $\s$-Galois group over the $\ds$-field $\C(x)$ with derivation $\de=\frac{d}{dx}$ and endomorphism $\s$ given by $\s(f(x))=f(x+1)$.
\end{cor}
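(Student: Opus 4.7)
The plan is to reduce the statement to Proposition \ref{prop do not occur} by exhibiting the constant subgroup of $\Ga$ as a quotient of $G$.

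First, I would use the structure theory of unipotent groups in characteristic zero: since $\G$ is a non-trivial unipotent linear algebraic group over $\C$, the commutator subgroup $[\G,\G]$ is a proper closed normal subgroup, so $\G/[\G,\G]$ is a non-trivial commutative unipotent group, hence isomorphic to $\Ga^r$ for some $r\geq 1$. Composing with a projection to one factor yields a surjective morphism of algebraic groups $\f\colon\G\twoheadrightarrow\Ga$, equivalently an injective map $\f^*\colon\C[\Ga]\hookrightarrow\C[\G]$.

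Next, I would pass to $\s$-algebraic groups and restrict to constant subgroups. By Lemma \ref{lemma: sigmaization is injective}, the map $[\s]_\C\f\colon[\s]_\C\G\to[\s]_\C\Ga$ has injective dual on coordinate rings and is thus a quotient map of $\s$-algebraic groups. A direct calculation shows that, for any affine $\C$-scheme $\Y$ of finite type, the constant subgroup $C([\s]_\C\Y)$ is represented by $\C[\Y]$ with trivial $\s$-action: the $\s$-ideal of $C([\s]_\C\Y)$ in $[\s]_\C\Y$ identifies all shifts, collapsing $[\s]_\C\C[\Y]=\C[\Y]\otimes_\C\C[\Y]\otimes_\C\cdots$ down to $\C[\Y]$. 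Hence the restriction $\psi\colon G\to H$ of $[\s]_\C\f$, where $H(S)=\{h\in\Ga(S)\mid\s(h)=h\}$, has dual map equal to $\f^*$ and is therefore a quotient map of $\s$-algebraic groups. The target $H$ is a non-trivial ($H(\C)=\C\neq 0$) proper ($\C\{H\}=\C[t]$ with trivial $\s$, not equal to the $\s$-polynomial ring $\C\{\Ga\}$) $\s$-closed subgroup of $\Ga$.

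Finally, suppose for contradiction that $G$ is the $\s$-Galois group of a $\s$-Picard-Vessiot ring $R/\C(x)$ with field of fractions $E$. The normal $\s$-closed subgroup $N=\ker(\psi)\trianglelefteq G$ corresponds, via the Galois correspondence (Theorem \ref{theo: Galois correspondence}), to an intermediate $\ds$-field $E^N$ of $E/\C(x)$. By the standard quotient principle for $\s$-Picard-Vessiot theory, $E^N$ is itself a $\s$-Picard-Vessiot extension of $\C(x)$ with $\s$-Galois group $G/N\simeq H$, contradicting Proposition \ref{prop do not occur}. The main obstacle I foresee is justifying this last quotient principle, since the paper states the Galois correspondence but does not explicitly record that the intermediate extension associated with a normal $\s$-closed subgroup is itself a $\s$-Picard-Vessiot extension whose $\s$-Galois group is the quotient. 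This is the standard analog of the classical Picard-Vessiot situation and can be either imported from \cite{DiVizioHardouinWibmer:DifferenceGaloisTheoryOfLinearDifferentialEquations} or checked directly by exhibiting a fundamental solution matrix for a suitable differential equation inside $E^N$ and applying Lemma \ref{lemma: crit} together with the Galois correspondence.
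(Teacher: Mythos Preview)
Your proposal is correct and follows essentially the same approach as the paper: exhibit a quotient map $\G\twoheadrightarrow\Ga$, check that it induces a quotient map of $\s$-algebraic groups from $G$ onto the constant subgroup $H$ of $\Ga$ (using that $\C\{G\}=\C[\G]$ with trivial $\s$, so the dual map is just $\f^*$), and then derive a contradiction to Proposition~\ref{prop do not occur}. The obstacle you foresee is handled exactly as you suggest: the paper invokes the second fundamental theorem of $\s$-Galois theory \cite[Thm.~3.3]{DiVizioHardouinWibmer:DifferenceGaloisTheoryOfLinearDifferentialEquations}, which guarantees that a quotient of a $\s$-Galois group is again a $\s$-Galois group over the same base.
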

\begin{proof}
 Over a field of characteristic zero, every non-trivial unipotent linear algebraic group has a quotient isomorphic to $\mathbb G_a$ (\cite[Prop. 14.21 and Rem. 14.24 (a)] {Milne:AlgebraicGroupsTheTheoryOfGroupSchemesOfFiniteTypeOverAField}). Let $\pi\colon \G\to\Ga$ be such a quotient map. We claim that $\pi$ induces a quotient map $\f\colon G\to H$, where $H$ is the constant subgroup of $\Ga$, i.e., $H(S)=\{g\in S|\ \s(g)=g\}$ for all $\C$-$\s$-algebras $S$.
 
 The morphism $\pi\colon \G\to \Ga$ is given by a polynomial $p\in \C[X_{ij},1/\det(X)]=\C[\GL_n]$. So $\pi(g)=p(g)$ for $g\in\G(T)\leq \GL_n(T)$ and $T$ a $\C$-algebra.
 The induced morphism $G\to [\s]_k\Ga$ of $\s$-algebraic groups is also given by $g\mapsto p(g)$ for $g\in G(S)$ and $S$ a $\C$-$\s$-algebra and therefore maps into $H$. To see that the dual of $\f\colon G\to H$ is injective, note that the dual $\pi^*\colon \C[t]=\C[\Ga]\to \C[\G],\ t\mapsto \overline{p}$ of $\pi$ is injective because $\pi$ is dominant. The coordinate ring of $G$ is $\C\{G\}=\C[\G]$ with $\s$ the identity and similarly for $H$. Moreover, $\f^*$ agrees with $\pi^*$ as a morphism of $\C$-algebras. In particular, $\f^*$ is injective and so $\f$ is a quotient map.
 
 Now suppose, for a contradiction, that $G$ is a $\s$-Galois group over $\C(x)$. Then, by the second fundamental theorem of $\s$-Galois theory (\cite[Thm. 3.3]{DiVizioHardouinWibmer:DifferenceGaloisTheoryOfLinearDifferentialEquations}), also $H$ would be a $\s$-Galois group $\C(x)$. This contradicts Proposition \ref{prop do not occur}.
%
%
\end{proof}

\bibliographystyle{alpha}
 \bibliography{references}

\def\cprime{$'$}
\begin{thebibliography}{DVHW17}

\bibitem[AMT09]{AmanoMasuokaTakeuchi:HopfPVtheory}
Katsutoshi Amano, Akira Masuoka, and Mitsuhiro Takeuchi.
\newblock Hopf algebraic approach to {P}icard-{V}essiot theory.
\newblock In {\em Handbook of algebra}, volume~6, pages 127--171.
  Elsevier/North-Holland, Amsterdam, 2009.

\bibitem[Arr13]{Arreche:AGaloisTheoreticProofOfTheDifferentialTranscendenceOfTheIncompleteGammaFunction}
Carlos~E. Arreche.
\newblock A {G}alois-theoretic proof of the differential transcendence of the
  incomplete {G}amma function.
\newblock {\em J. Algebra}, 389:119--127, 2013.

\bibitem[AS17]{ArrecheSinger}
Carlos~E. Arreche and Michael~F. Singer.
\newblock Galois groups for integrable and projectively integrable linear
  difference equations.
\newblock {\em J. Algebra}, 480:423--449, 2017.

\bibitem[Bac18]{MR3774417}
Annette Bachmayr.
\newblock New classes of parameterized differential {G}alois groups.
\newblock {\em Math. Z.}, 288(1-2):361--381, 2018.

\bibitem[BHH16]{BachmayrHarbaterHartmann:DifferentialGaloisGroupsOverLaurentSeriesFields}
Annette Bachmayr, David Harbater, and Julia Hartmann.
\newblock Differential {G}alois groups over {L}aurent series fields.
\newblock {\em Proc. Lond. Math. Soc. (3)}, 112(3):455--476, 2016.

\bibitem[BHH18]{BachmayrHarbaterHartmann:DifferentialEmbeddingProblemsOverLaurantSeriesFields}
Annette Bachmayr, David Harbater, and Julia Hartmann.
\newblock Differential embedding problems over {L}aurent series fields.
\newblock {\em J. Algebra}, 513:99--112, 2018.

\bibitem[BHHP20]{BachmayrHartmannHarbaterPopLarge}
Annette Bachmayr, David Harbater, Julia Hartmann, and Florian Pop.
\newblock Large fields in differential {G}alois theory.
\newblock {\em J. Inst. Math. Jussieu}, 2020.
\newblock \url{https://doi.org/10.1017/S1474748020000018}.

\bibitem[BHHW]{BachmayrHarbaterHartmannWibmer:FreeDifferentialGaloisGroups}
Annette Bachmayr, David Harbater, Julia Hartmann, and Michael Wibmer.
\newblock Free differential {G}alois groups.
\newblock arXiv:1904.07806.

\bibitem[BHHW18]{BachmayrHarbaterHartmannWibmer:DifferentialEmbeddingProblemsOverComplexFunctionFields}
Annette Bachmayr, David Harbater, Julia Hartmann, and Michael Wibmer.
\newblock Differential embedding problems over complex function fields.
\newblock {\em Doc. Math.}, 23:241--291, 2018.

\bibitem[BS64]{BorelSerre}
A.~Borel and J.-P. Serre.
\newblock Th\'eor\`emes de finitude en cohomologie galoisienne.
\newblock {\em Comment. Math. Helv.}, 39:111--164, 1964.

\bibitem[BW]{BachmayrWibmer:TorsorsForDifferenceAlgebraicGroups}
Annette Bachmayr and Michael Wibmer.
\newblock Torsors for difference algebraic groups.
\newblock arXiv:1607.07035v1.

\bibitem[Coh65]{Cohn:difference}
Richard~M. Cohn.
\newblock {\em Difference algebra}.
\newblock Interscience Publishers John Wiley \& Sons, New York-London-Sydney,
  1965.

\bibitem[CS07]{CassSin}
Phyllis~J. Cassidy and Michael~F. Singer.
\newblock Galois theory of parameterized differential equations and linear
  differential algebraic groups.
\newblock In {\em Differential equations and quantum groups}, volume~9 of {\em
  IRMA Lect. Math. Theor. Phys.}, pages 113--155. Eur. Math. Soc., Z\"urich,
  2007.

\bibitem[DHR18]{DreyfusHardouinRoques:HypertranscendenceOfSolutionsOfMahlerFunctions}
Thomas Dreyfus, Charlotte Hardouin, and Julien Roques.
\newblock Hypertranscendence of solutions of {M}ahler equations.
\newblock {\em J. Eur. Math. Soc. (JEMS)}, 20(9):2209--2238, 2018.

\bibitem[Dre14]{Dreyfus}
Thomas Dreyfus.
\newblock A density theorem in parametrized differential {G}alois theory.
\newblock {\em Pacific J. Math.}, 271(1):87--141, 2014.

\bibitem[DV12]{DiVizio:ApprocheGaloisienneDeLaTranscendanceDifferentielle}
Lucia Di~Vizio.
\newblock Approche galoisienne de la transcendance diff{\'e}rentielle.
\newblock In {\em Transendance et irrationalit{\'e}}, SMF Journ\'ee Annuelle
  [SMF Annual Conference], pages 1--20. Soci\'et\'e Math\'ematique de France,
  2012.
\newblock arXiv:1404.3611.

\bibitem[DVHW14]{DiVizioHardouinWibmer:DifferenceGaloisTheoryOfLinearDifferentialEquations}
Lucia Di~Vizio, Charlotte Hardouin, and Michael Wibmer.
\newblock Difference {G}alois theory of linear differential equations.
\newblock {\em Adv. Math.}, 260:1--58, 2014.

\bibitem[DVHW17]{DiVizioHardouinWibmer:DifferenceAlgebraicRel}
Lucia Di~Vizio, Charlotte Hardouin, and Michael Wibmer.
\newblock Difference algebraic relations among solutions of linear differential
  equations.
\newblock {\em J. Inst. Math. Jussieu}, 16(1):59--119, 2017.

\bibitem[Har05]{Hartmann:OnTheInverseProblemInDifferentialGaloisTheory}
Julia Hartmann.
\newblock On the inverse problem in differential {G}alois theory.
\newblock {\em J. Reine Angew. Math.}, 586:21--44, 2005.

\bibitem[HH10]{HH}
David Harbater and Julia Hartmann.
\newblock Patching over fields.
\newblock {\em Israel J. Math.}, 176:61--107, 2010.

\bibitem[HMO17]{HardouinMinchenkoOvchinnikov:CalculatingDifferentialGaloisGroupsOfParameterizedDifferentialEqautionsWithApplications}
Charlotte Hardouin, Andrei Minchenko, and Alexey Ovchinnikov.
\newblock Calculating differential {G}alois groups of parametrized differential
  equations, with applications to hypertranscendence.
\newblock {\em Math. Ann.}, 368(1-2):587--632, 2017.

\bibitem[HS08]{HardouinSinger:DifferentialGaloisTheoryofLinearDifferenceEquations}
Charlotte Hardouin and Michael~F. Singer.
\newblock Differential {G}alois theory of linear difference equations.
\newblock {\em Math. Ann.}, 342(2):333--377, 2008.

\bibitem[Kol68]{MR240106}
E.~R. Kolchin.
\newblock Algebraic groups and algebraic dependence.
\newblock {\em Amer. J. Math.}, 90:1151--1164, 1968.

\bibitem[Lan08]{Landesman:GeneralizedDifferentialGaloisTheory}
Peter Landesman.
\newblock Generalized differential {G}alois theory.
\newblock {\em Trans. Amer. Math. Soc.}, 360(8):4441--4495, 2008.

\bibitem[Lev08]{Levin:difference}
Alexander Levin.
\newblock {\em Difference algebra}, volume~8 of {\em Algebra and Applications}.
\newblock Springer, New York, 2008.

\bibitem[Mai15]{param_LAG}
Annette Maier.
\newblock On the parameterized differential inverse {G}alois problem over
  {$k((t))(x)$}.
\newblock {\em J. Algebra}, 428:43--53, 2015.

\bibitem[Mil17]{Milne:AlgebraicGroupsTheTheoryOfGroupSchemesOfFiniteTypeOverAField}
J.~S. Milne.
\newblock {\em Algebraic groups}, volume 170 of {\em Cambridge Studies in
  Advanced Mathematics}.
\newblock Cambridge University Press, Cambridge, 2017.
\newblock The theory of group schemes of finite type over a field.

\bibitem[MOS14]{MinchenkoOvchinnikovSinger:UnipotentDifferentialAlgebraicGroupsAsParameterizedDifferentialGaloisGroups}
Andrey Minchenko, Alexey Ovchinnikov, and Michael~F. Singer.
\newblock Unipotent differential algebraic groups as parameterized differential
  {G}alois groups.
\newblock {\em J. Inst. Math. Jussieu}, 13(4):671--700, 2014.

\bibitem[MS12]{MR2975151}
Claude Mitschi and Michael~F. Singer.
\newblock Monodromy groups of parameterized linear differential equations with
  regular singularities.
\newblock {\em Bull. Lond. Math. Soc.}, 44(5):913--930, 2012.

\bibitem[Sin13]{Singer:LinearAlgebraicGroupsAsParameterizedPicardVessiotGaloisGroups}
Michael~F. Singer.
\newblock Linear algebraic groups as parameterized {P}icard-{V}essiot {G}alois
  groups.
\newblock {\em J. Algebra}, 373:153--161, 2013.

\bibitem[Spr09]{Springer}
T.~A. Springer.
\newblock {\em Linear algebraic groups}.
\newblock Birkh\"auser Boston Inc., second edition, 2009.

\bibitem[Sti09]{MR2464941}
Henning Stichtenoth.
\newblock {\em Algebraic function fields and codes}, volume 254 of {\em
  Graduate Texts in Mathematics}.
\newblock Springer-Verlag, Berlin, second edition, 2009.

\bibitem[Sza09]{Samuely:GaloisGroupsAndFundamentalGroups}
Tam\'{a}s Szamuely.
\newblock {\em Galois groups and fundamental groups}, volume 117 of {\em
  Cambridge Studies in Advanced Mathematics}.
\newblock Cambridge University Press, Cambridge, 2009.

\bibitem[TT79]{Tretkoff}
Carol Tretkoff and Marvin Tretkoff.
\newblock Solution of the inverse problem of differential {G}alois theory in
  the classical case.
\newblock {\em Amer. J. Math.}, 101(6):1327--1332, 1979.

\bibitem[TW18]{TomasicWibmer:Babbit}
Ivan Toma\v{s}i\'{c} and Michael Wibmer.
\newblock Strongly \'{e}tale difference algebras and {B}abbitt's decomposition.
\newblock {\em J. Algebra}, 504:10--38, 2018.

\bibitem[vdPS03]{SingerPut:differential}
Marius van~der Put and Michael~F. Singer.
\newblock {\em Galois theory of linear differential equations}, volume 328 of
  {\em Grundlehren der Mathematischen Wissenschaften [Fundamental Principles of
  Mathematical Sciences]}.
\newblock Springer-Verlag, Berlin, 2003.

\bibitem[Wat79]{Waterhouse:IntroductiontoAffineGroupSchemes}
William~C. Waterhouse.
\newblock {\em Introduction to affine group schemes}, volume~66 of {\em
  Graduate Texts in Mathematics}.
\newblock Springer-Verlag, New York, 1979.

\bibitem[Wib]{Wibmer:FinitenessProperties}
Michael Wibmer.
\newblock Finiteness properties of affine difference algebraic groups.
\newblock arXiv:1405.6603.

\bibitem[Wib15]{Wibmer:Habil}
Michael Wibmer.
\newblock Affine difference algebraic groups, 2015.
\newblock Habilitation thesis, RWTH Aachen University, 146 pages,
  \url{http://www.math.tugraz.at/~wibmer/habil.pdf}.

\bibitem[Wib20]{Wibmer:FreeProalgebraicGroups}
Michael Wibmer.
\newblock {Free proalgebraic groups}.
\newblock {\em {\'{E}}pijournal {G}eom. {A}lg\'{e}brique}, 4(1):1--36, 2020.

\end{thebibliography}
\end{document}